\renewcommand*{\backref}[1]{}
\renewcommand*{\backrefalt}[4]{%
    \ifcase #1 (Not cited.)%
    \or        (p.\,#2)%
    \else      (pp.\,#2)%
    \fi}
\begin{document}

\newtheorem{theorem}{Theorem}
\newtheorem{lemma}[theorem]{Lemma}
\newtheorem{claim}[theorem]{Claim}
\newtheorem{cor}[theorem]{Corollary}
\newtheorem{prop}[theorem]{Proposition}
\newtheorem{definition}{Definition}
\newtheorem{question}[theorem]{Open Question}
\newtheorem{conj}[theorem]{Conjecture}
\newtheorem{prob}{Problem}
\newtheorem{algorithm}[theorem]{Algorithm}
\newtheorem{remark}[theorem]{Remark}

\def\squareforqed{\hbox{\rlap{$\sqcap$}$\sqcup$}}
\def\qed{\ifmmode\squareforqed\else{\unskip\nobreak\hfil
\penalty50\hskip1em\nobreak\hfil\squareforqed
\parfillskip=0pt\finalhyphendemerits=0\endgraf}\fi}

\numberwithin{equation}{section}
\numberwithin{theorem}{section}

\newcommand{\todoI}[2][]{\todo[#1,color=yellow]{I: #2}}
\newcommand{\todoM}[2][]{\todo[#1,color=blue!60]{M: #2}}

\def\cA{{\mathcal A}}
\def\cB{{\mathcal B}}
\def\cC{{\mathcal C}}
\def\cD{{\mathcal D}}
\def\cE{{\mathcal E}}
\def\cF{{\mathcal F}}
\def\cG{{\mathcal G}}
\def\cH{{\mathcal H}}
\def\cI{{\mathcal I}}
\def\cJ{{\mathcal J}}
\def\cK{{\mathcal K}}
\def\cL{{\mathcal L}}
\def\cM{{\mathcal M}}
\def\cN{{\mathcal N}}
\def\cO{{\mathcal O}}
\def\cP{{\mathcal P}}
\def\cQ{{\mathcal Q}}
\def\cR{{\mathcal R}}
\def\cS{{\mathcal S}}
\def\cT{{\mathcal T}}
\def\cU{{\mathcal U}}
\def\cV{{\mathcal V}}
\def\cW{{\mathcal W}}
\def\cX{{\mathcal X}}
\def\cY{{\mathcal Y}}
\def\cZ{{\mathcal Z}}

\def\fA{{\mathfrak A}}
\def\fJ{{\mathfrak J}}

\def\sssum{\mathop{\sum\!\sum\!\sum}}
\def\ssum{\mathop{\sum\ldots \sum}}

\def\Xm{\cX_m}

\def\MT{\mathrm{M}}
\def\ET{\mathrm{E}}
\def\rS{\mathrm{S}}

\def \C {{\mathbb C}}
\def \F {{\mathbb F}}
\def \L {{\mathbb L}}
\def \K {{\mathbb K}}
\def \N {{\mathbb N}}
\def \R {{\mathbb R}}
\def \Q {{\mathbb Q}}
\def \Z {{\mathbb Z}}

\def\barG{\overline{\cG}}
\def\\{\cr}
\def\({\left(}
\def\){\right)}
\def\fl#1{\left\lfloor#1\right\rfloor}
\def\rf#1{\left\lceil#1\right\rceil}

\newcommand{\pfrac}[2]{{\left(\frac{#1}{#2}\right)}}

\def\rem{\mathrm{\, rem~}}

\def \Prob{{\mathrm {}}}
\def\e{\mathbf{e}}
\def\ep{{\mathbf{\,e}}_p}
\def\epp{{\mathbf{\,e}}_{p^2}}
\def\eq{{\mathbf{\,e}}_q}
\def\eM{{\mathbf{\,e}}_M}
\def\eps{\varepsilon}
\def\Res{\mathrm{Res}}
\def\vec#1{\mathbf{#1}}

\def \li {\mathrm {li}\,}

\def\ip{\overline p}
\def\ipd{\ip_d}
\def\iq{\overline q}

\def\e{{\mathbf{\,e}}}
\def\ep{{\mathbf{\,e}}_p}
\def\em{{\mathbf{\,e}}_m}

\def\mand{\qquad\mbox{and}\qquad}

\newcommand{\commM}[1]{\marginpar{%
\begin{color}{red}
\vskip-\baselineskip %raise the marginpar a bit
\raggedright\footnotesize
\itshape\hrule \smallskip M: #1\par\smallskip\hrule\end{color}}}

\newcommand{\commI}[1]{\marginpar{%
\begin{color}{blue}
\vskip-\baselineskip %raise the marginpar a bit
\raggedright\footnotesize
\itshape\hrule \smallskip I: #1\par\smallskip\hrule\end{color}}}

\title[Smooth square-free numbers  in 
progressions]{On smooth square-free numbers in arithmetic 
progressions}
\date{\today}

\author{Marc Munsch}
\address{5010 Institut f\"{u}r Analysis und Zahlentheorie
8010 Graz, Steyrergasse 30, Graz, Austria}
\email{munsch@math.tugraz.at}

\author{Igor E.  Shparlinski}

\address{Department of Pure Mathematics, University of New South Wales\\
2052 NSW, Australia.}

\email{igor.shparlinski@unsw.edu.au}

\subjclass[2010]{Primary 11N25; Secondary 11B25, 11L40}
 \keywords{residue classes, square-free integers, smooth integers, primes,  character sums}

\begin{abstract} A.~Booker and C.~Pomerance (2017) have shown that any residue class modulo a 
prime $p\ge 11$ can be represented by a positive $p$-smooth square-free integer $s = p^{O(\log p)}$ with all prime factors up to $p$ and conjectured that in fact one can find such $s$ with $s = p^{O(1)}$. 
Using bounds on double Kloosterman sums due to M.~Z.~Garaev (2010) we prove this conjecture in a stronger form $s \le p^{3/2 + o(1)}$ and also consider more general versions of this question replacing $p$-smoothness of $s$ by the stronger condition of $p^{\alpha}$-smoothness. 
Using bounds on multiplicative character sums and a sieve method, we also  show that we can represent all residue classes by a positive square-free integer $s\le p^{2+o(1)}$ which is $p^{1/(4e^{1/2})+o(1)}$-smooth. 
Additionally, we obtain stronger results for almost all primes $p$. 
\end{abstract}

\maketitle

\newpage

\tableofcontents

\section{Introduction and main results}

\subsection{Motivation}

We recall that an integer $n$ is called {\it $y$-smooth\/} if all prime divisors 
of $n$ do not exceed $y$, and is called {\it square-free\/} if it is not divisible by 
a square of a prime.

Following Booker and Pomerance~\cite{BoPom}, for a prime $p $ we denote 
by $M(p)$ the smallest integer $M$ such that any residue class modulo $p$ contains a $p$-smooth
square-free representative, and set, formally, $M(p) =\infty$ if no such representative
exists. We note that by~\cite[Theorem~1]{BoPom} we have $M(p)< \infty$ for every $p\ge 11$.

It is  noted in~\cite[Section~6]{BoPom} that the argument of the proof of~\cite[Theorem~1]{BoPom} 
actually gives $M(p) =  p^{O(\log p)}$ and conjectured   that  $M(p) =  p^{O(1)}$.
Here we settle this conjecture in a stronger and more general form.

More precisely, we address the question of Booker and Pomerance~\cite{BoPom}
about the smallest value of $\alpha$ for which $M_\alpha(p) < \infty$ for a sufficiently large $p$
and show that this is true for $\alpha >  1/(4 e^{1/2})$,  by using the methods of~\cite{Harm,HaSh}.
In fact, we obtain an explicit bound on  $M_\alpha(p)$ and also extend this for composite moduli.

We also note that the area of representations of residue classes by numbers of 
prescribed arithmetic structure takes its origins in the works of 
Erd{\H o}s, Odlyzko and S{\'a}rk{\"o}zy~\cite{EOS} and Harman~\cite{Harm}, 
see also~\cite{FrKuSh, Gar2, HaSh, RaWa, Shp1, Shp2, Wal}  for more recent 
developments. 

\subsection{Main results}

For a real positive $\alpha$, we denote 
by $M_\alpha^*(q)$ the smallest integer $M$ such that any {\it reduced\/} residue class modulo 
$q$ contains  a $q^\alpha$-smooth
square-free positive representative $k\le M$, and set, formally, $M_\alpha^*(q) =\infty$ if no such representative
exists.

Our main result is

\begin{theorem}\label{thm:Malpha} For  cube-free integer $q\to \infty$ and any fixed 
$$
\alpha > \begin{cases}
1/(4 e^{1/2}), & \text{if $q$ is  cube-free,}\\
1/(3 e^{1/2}), & \text{otherwise,}
\end{cases}
$$
 we have
$$
M_{\alpha}^*(q) \le  q^{2 + o(1)}.
$$
\end{theorem}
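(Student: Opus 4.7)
The plan is to detect representatives in a reduced residue class $a\bmod q$ via orthogonality of Dirichlet characters. With $M=q^{2+o(1)}$ and $y=q^{\alpha}$, set
\begin{equation*}
N(a;M,y)=\sum_{\substack{k\le M,\ (k,q)=1\\ \mu^2(k)=1,\ P(k)\le y\\ k\equiv a\,(\mathrm{mod}\,q)}}1=\frac{1}{\varphi(q)}\sum_{\chi\bmod q}\overline{\chi}(a)\,S_\chi(M,y),
\end{equation*}
where $P(k)$ is the largest prime factor of $k$ and $S_\chi(M,y)=\sum_{k}\chi(k)$ runs over squarefree $y$-smooth $k\in[1,M]$ coprime to $q$. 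The principal character contributes a main term of order $M\rho(1/\alpha)/q\gg q^{1-o(1)}$, where $\rho$ is the Dickman function, so it is enough to show that each non-principal $S_\chi(M,y)$ is smaller than this, with enough uniformity for summing over $\chi$.

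First I would insert the squarefree condition via $\mu^2(k)=\sum_{d^2\mid k}\mu(d)$, truncating $d$ at a convenient height; when $q$ is cube-free this inversion interacts cleanly with the condition $(k,q)=1$, whereas for general $q$ the prime cubes dividing $q$ introduce an additional loss that is the source of the weaker exponent $1/(3e^{1/2})$. Next I would apply the Harman sieve machinery from \cite{Harm,HaSh}: via Buchstab's identity the smoothness condition $P(k)\le y$ is peeled off, decomposing $S_\chi(M,y)$ into Type~I sums $\sum_{a\sim A}\alpha_a\sum_{b\sim B}\chi(ab)$, for which Burgess's inequality applied on intervals up to length $M=q^{2+o(1)}$ delivers a power saving, and Type~II sums $\sum_{a\sim A}\sum_{b\sim B}\alpha_a\beta_b\chi(ab)$, which are handled by Cauchy--Schwarz together with the P\'olya--Vinogradov or large-sieve bound for $\sum_{\chi}|\sum_b\beta_b\chi(b)|^2$.

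The main obstacle is the sieve-theoretic optimization pinning down the threshold $\alpha>1/(4e^{1/2})$. The prefactor $1/4$ reflects the Burgess exponent beyond which character sums over intervals fail to save a power, while the factor $e^{1/2}$ arises from a Rankin-type optimization inside the recursive Buchstab decomposition: balancing the admissible ranges of the Type~I/Type~II variables against the Dickman density $\rho(u)$ produces an integral whose infimum is attained at a value that shifts $\alpha$ by $\sqrt{e}$. Once the uniform bound $\max_{\chi\ne\chi_0}|S_\chi(M,y)|=o(q^{1-\delta})$ is established for some $\delta>0$, the sum over non-principal characters is dominated by the main term, giving $N(a;M,y)\ge 1$ for every reduced class $a$ and all sufficiently large $q$, hence $M_\alpha^*(q)\le q^{2+o(1)}$.
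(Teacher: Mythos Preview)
Your plan is essentially Harman's original approach from~\cite{Harm}, and that approach only gives $M_\alpha^*(q)\le q^{9/4+\varepsilon}$ (or $q^{7/3+\varepsilon}$ for general $q$), not $q^{2+o(1)}$. The gap is in the claim that $\max_{\chi\ne\chi_0}|S_\chi(M,y)|=o(q^{1-\delta})$: with $M=q^{2}$ you are asking for better-than-square-root cancellation in a twisted sum over smooth squarefree integers, and neither Burgess on the Type~I pieces nor the large sieve on the Type~II pieces of a Buchstab decomposition delivers this. In fact the Type~I/Type~II ranges one gets from Buchstab, together with Burgess, are exactly what force the extra $q^{1/4}$ in Harman's result.

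The paper does something structurally different. It does \emph{not} sum over all $y$-smooth squarefree $k\le M$; instead it looks only at products $k=mnr$ where $m,n\sim N=q^{1/4+\varepsilon}$ are squarefree with $\mu^2(mn)=1$ and $N^\zeta$-smooth, and $r=\ell_1\cdots\ell_{12}s$ is a product of twelve primes $\ell_j\sim q^{1/8}$ and one prime $s\sim q^{1/\nu}$. After orthogonality the character sum factors, and the H\"older inequality is applied with exponents $(1/2,1/(2\nu),(\nu-1)/(2\nu))$ against the mean-value bounds $\sum_\chi|V_{1/8}(\chi)|^{16}\ll q^2$, $\sum_\chi|V_{1/\nu}(\chi)|^{2\nu}\ll q^2$, and $\sum_\chi|W_d(\chi)|^2\ll (N/d)^4 q$. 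The pointwise saving comes only from the $m$-sum, and for that one needs the Burgess bound \emph{over squarefree integers} (Corollary~\ref{cor:CharSquarfree-eps}), not ordinary Burgess; this is what pushes $N$ down to $q^{1/4+\varepsilon}$ and hence $x=N^2R$ down to $q^{2+o(1)}$.

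Two of your side explanations are also off. The factor $e^{1/2}$ does not come from a Rankin optimisation inside Buchstab; it comes from Balog's lower-bound sieve trick (Lemma~\ref{lem:sieve}): with $\zeta=(1+\varepsilon)e^{-1/2}$ one has $1+2\log\zeta=2\log(1+\varepsilon)>0$, which is the positivity needed for the main term. And the cube-free/general dichotomy has nothing to do with how $\mu^2$ interacts with $(k,q)=1$; it is the range of validity of the squarefree Burgess bound in Lemma~\ref{lem:CharSquarfree}: for cube-free $q$ every $r\ge2$ is allowed, giving a nontrivial bound once $t\ge q^{1/4+\varepsilon}$, while for arbitrary $q$ only $r=2,3$ are available, forcing $t\ge q^{1/3+\varepsilon}$ and hence $N=q^{1/3+\varepsilon}$ in that case.
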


\begin{remark} In particular, Theorem~\ref{thm:Malpha} improves on the result of Harman~\cite[Theorem~3]{Harm} which, 
for any fixed $\varepsilon > 0$, gives the existence  (without the additional 
square-freeness condition)
of a $q^{1/(4 e^{1/2})+\varepsilon}$-smooth integer $n\leq q^{9/4+\varepsilon}$ in arithmetic progressions modulo a cube-free $q$ and  a  $q^{1/(3 e^{1/2})+\varepsilon}$-smooth integer $n\leq q^{7/3+\varepsilon}$ in arithmetic progressions modulo an arbitrary $q$. 
\end{remark} 

To exhibit the main ideas behind of our approach in the simplest form and also because this 
corresponds to the original question of Booker and Pomerance~\cite{BoPom}, 
in  Theorems~\ref{thm:Malpha-p-AA}, ~\ref{thm:M1} and~\ref{thm:M1-AA} we treat only prime moduli $p$. However, all necessary ingredients are readily available 
in the case of composite moduli $q$ as well, see, for example ~\cite[Lemma~4]{MuSh},~\cite[Corollary~2.5]{FoSh}  and~\cite[Lemma~3.1]{Irv}.

For almost all primes $p$, we obtain a stronger result as Theorem~\ref{thm:Malpha} using bounds on character sums from ~\cite{MuSh}.

\begin{theorem}\label{thm:Malpha-p-AA}
As $Q\to \infty$, for any fixed $\alpha > 0$ for all  but $Q^{o(1)}$ 
primes $p\in [Q,2Q]$,  we have
$$
M_\alpha^*(p) \le  p^{2   + o(1)}.
$$
\end{theorem}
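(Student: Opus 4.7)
The plan is to fix $\alpha > 0$ and an auxiliary $\varepsilon > 0$, set $X = p^{2+\varepsilon}$ and $y = p^\alpha$, and for every reduced residue class $a$ modulo $p$ show that the counting function
\begin{equation*}
N(a) = \#\{n \le X : n \equiv a \pmod p,\ n \text{ square-free and } y\text{-smooth}\}
\end{equation*}
is strictly positive, outside at most $Q^{o(1)}$ primes $p \in [Q, 2Q]$. Orthogonality of Dirichlet characters gives
\begin{equation*}
N(a) = \frac{\Psi^*(X,y)}{p-1} + \frac{1}{p-1}\sum_{\chi \ne \chi_0}\bar\chi(a)\,T(\chi), \qquad T(\chi) = \sum_{\substack{n \le X \\ n \text{ sq.-free, } y\text{-smooth}}}\chi(n),
\end{equation*}
and since $u = \log X/\log y = (2+\varepsilon)/\alpha$ is bounded, standard estimates for smooth square-free integers yield $\Psi^*(X,y) = X^{1+o(1)}$, so the main term is of size $p^{1+\varepsilon+o(1)}$.

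To handle the non-principal characters I would use the identity $\mu^2(n) = \sum_{d^2 \mid n}\mu(d)$ to unfold the square-free condition as
\begin{equation*}
T(\chi) = \sum_{\substack{d \le X^{1/2} \\ d \text{ is } y\text{-smooth}}}\mu(d)\chi(d)^2\sum_{\substack{m \le X/d^2 \\ m \text{ is } y\text{-smooth}}}\chi(m),
\end{equation*}
thereby reducing matters to character sums over plain $y$-smooth integers. The key analytic input is then supplied by the bounds of Munsch and Shparlinski~\cite{MuSh}, which, for any fixed $\alpha > 0$ and $X$ in the range $p^{1+\varepsilon} \le X \le p^{O(1)}$, provide a power saving
\begin{equation*}
|T(\chi)| \le X p^{-\eta}
\end{equation*}
uniformly over non-principal $\chi \bmod p$, valid outside an exceptional set of at most $Q^{o(1)}$ primes in $[Q, 2Q]$; a standard dyadic decomposition absorbs the outer $d$-sum at the cost of only an $X^{o(1)}$ factor.

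Combining these pieces, the non-principal contribution to $N(a)$ is at most $X p^{-\eta+o(1)} = p^{2+\varepsilon - \eta + o(1)}$ on the good primes, which is dominated by the main term $p^{1+\varepsilon+o(1)}$ as soon as $\eta > 1$; choosing $\varepsilon < \eta - 1$ therefore yields $N(a) > 0$ for every reduced $a$, hence $M_\alpha^*(p) \le p^{2+\varepsilon}$, and sending $\varepsilon \to 0$ completes the argument. The main obstacle is precisely to confirm that the saving $\eta$ from~\cite{MuSh} can be made strictly greater than $1$, so as to beat the trivial factor of $p$ accumulated by an $\ell^1$ sum over characters; should the pointwise estimate fall short in our range, I would upgrade it via a Cauchy--Schwarz step combined with the corresponding second-moment bound in $\chi$ from~\cite{MuSh}, which is tailored to exactly this kind of almost-all-primes application.
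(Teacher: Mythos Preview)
Your plan has a genuine gap: neither the pointwise bound nor the Cauchy--Schwarz fallback can close the factor of $p$ lost to the $\ell^1$ sum over characters.  The input from~\cite{MuSh} quoted in the paper is Lemma~\ref{lem:CharSquarfree-AA}, which bounds the sum $S^\sharp_\chi(t)$ over \emph{square-free} integers (no smoothness restriction) by $t^{1-\delta}$ for any fixed $\delta<1/4$, outside a thin set of primes.  Read as $|S^\sharp_\chi(t)|\le t\cdot p^{-\eta}$ with $t=X\approx p^{2+\varepsilon}$, this gives at best $\eta\approx(2+\varepsilon)\delta<3/4$, well short of the threshold $\eta>1$ you correctly identify.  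Your M\"obius reduction lands on character sums over $y$-smooth integers, for which~\cite{MuSh} gives no bound; and in any case a saving of a factor exceeding $p$ in a sum of length $X\approx p^2$ is stronger than square-root cancellation and is not available from that source.  The second-moment route fails for the same structural reason: since $X\gg p$, Lemma~\ref{lem:Aver} yields only $\sum_\chi|T(\chi)|^2\ll X^2$, so Cauchy--Schwarz gives $\frac{1}{p-1}\sum_{\chi\ne\chi_0}|T(\chi)|\ll Xp^{-1/2}$, still larger than the main term $X/p$.  No hybrid of the two does better.

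What you are missing is a multiplicative factorisation that produces pieces of length \emph{below} $p$, so that the mean-value estimate becomes nontrivial.  The paper does not count all smooth square-free $n\le X$; instead it counts products $suv$ with $s$ ranging over square-free integers up to a short height $T\approx Q^{2/n}$, and $u,v\in\cU$ each a product of $k\approx 1/\alpha$ distinct primes of size about $W^{1/k}$, where $W\approx Q^{1-2\delta/n+\varepsilon}<p$.  Orthogonality then yields the trilinear expression
\[
N=\frac{1}{p-1}\sum_{\chi\in\cX_p}\overline\chi(a)\,\Bigl(\sum_{u\in\cU}\chi(u)\Bigr)^{2}\,S^\sharp_\chi(T),
\]
and because $\#\cU\approx W<p$ the bound $\sum_\chi\bigl|\sum_{u\in\cU}\chi(u)\bigr|^2\le(p-1)\,\#\cU$ is genuinely below the trivial $(p-1)(\#\cU)^2$.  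Combined with the pointwise estimate $|S^\sharp_\chi(T)|\le T^{1-\delta}$ from Lemma~\ref{lem:CharSquarfree-AA} on the \emph{short} $s$-sum, the error term becomes $T^{1-\delta}\,\#\cU$, which is beaten by the main term $T(\#\cU)^2/p$ exactly when $pT^{-\delta}W^{-1}=o(1)$; the parameters are tuned so that this is $p^{-\varepsilon+o(1)}$.  A final step discards the $o(N)$ products $suv$ that fail to be square-free, using that any such product is divisible by $\ell^2$ for a prime $\ell\gg W^{1/k}$.  The decomposition into short factors is the essential idea; once you have it, the rest is routine.
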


In the particular case of $p$-smoothness, we can actually do more and break the $p^2$-barrier. Using
bounds of double Kloosterman sums with prime arguments due to Garaev~\cite{Gar1},  we prove:

\begin{theorem}
\label{thm:M1}
As $p\to \infty$ we have
$$
M(p) \le p^{3/2+o(1)}. 
$$
\end{theorem}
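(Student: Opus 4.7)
For the class $a\equiv 0\pmod p$ the choice $s=p$ is itself $p$-smooth, square-free and well below the target, so we may assume $(a,p)=1$. The plan is to produce $s=q_1 q_2$ with $q_1\ne q_2$ distinct primes in $(1,X]\setminus\{p\}$ for some $X=p^{3/4+o(1)}$; any such $s$ is automatically $p$-smooth, square-free, and satisfies $s\le X^2=p^{3/2+o(1)}$. It therefore suffices to show that
\begin{equation*}
N(a)=\#\bigl\{(q_1,q_2)\;:\; q_1,q_2\le X\text{ prime},\ q_1\ne q_2,\ q_1 q_2\equiv a\pmod p\bigr\}
\end{equation*}
is positive for each $a$ coprime to $p$.

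Next, rewrite the congruence as $\overline{q_1 q_2}\equiv \overline a\pmod p$ (using that both sides are invertible mod $p$) and expand via orthogonality of additive characters modulo $p$:
\begin{equation*}
N(a)=\frac{1}{p}\sum_{t=0}^{p-1}\ep\bigl(-t\overline a\bigr)\,K(t)+O(1),\qquad K(t)=\sum_{\substack{q_1,q_2\le X\\ q_i\ne p}}\ep\bigl(t\,\overline{q_1 q_2}\bigr),
\end{equation*}
where the $O(1)$ term removes the diagonal $q_1=q_2$, which is harmless since $q^2\equiv a\pmod p$ has at most two solutions. The $t=0$ term produces the main contribution $\pi(X)^2/p$, which by the prime number theorem is of order $p^{1/2-o(1)}$. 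For $t\ne 0$, the double sum $K(t)$ is precisely a double Kloosterman sum whose two arguments range over primes, the object estimated by Garaev in~\cite{Gar1}. Inserting Garaev's bound and summing the resulting estimate over $t\ne 0$ --- either trivially, if the bound saves a power of $p$ over $X^2$, or else after a Cauchy--Schwarz step combined with Parseval's identity $\sum_t|K(t)|^2=p\cdot\#\{(q_1,q_2,q_1',q_2'):\, q_1 q_2\equiv q_1'q_2'\pmod p\}$ --- should produce a total contribution of smaller order than $p^{1/2}$, whence $N(a)>0$.

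The main technical obstacle is a quantitative one: to verify that the choice $X\approx p^{3/4}$ lies in the range where Garaev's bound on double Kloosterman sums with prime arguments is applicable and that the saving is strong enough to dominate the $p-1$ terms in the summation over $t$. In practice this may require working with an asymmetric range $q_1\le M$, $q_2\le N$ with $MN=p^{3/2+o(1)}$, chosen so as to land in the optimal regime of the estimate in~\cite{Gar1}. Once the error term has been bounded by $o(p^{1/2})$, the positivity of $N(a)$ for every reduced residue class and every sufficiently large $p$ follows immediately and gives $M(p)\le p^{3/2+o(1)}$.
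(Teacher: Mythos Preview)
Your approach has a genuine quantitative gap that cannot be repaired within the two-prime framework.

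First, Garaev's bound as recorded in Lemma~\ref{lem:BilinSums} requires $L\le p^{1/3}$; for $X\approx p^{3/4}$ the estimate valid in that range (see the proof of that lemma) gives only $|K(t)|\le X^{15/8}p^{o(1)}$, a saving of merely $p^{3/32}$ over the trivial bound $X^2=p^{3/2}$. Summing $|K(t)|$ over the $p-1$ nonzero frequencies and dividing by $p$ leaves an error of order $p^{45/32+o(1)}$, which dwarfs the main term $\pi(X)^2/p\asymp p^{1/2}/(\log p)^2$. The Cauchy--Schwarz/Parseval alternative fares no better: one has
\[
\Bigl|\frac1p\sum_{t\ne0}\ep(-t\overline a)\,K(t)\Bigr|^2\le\frac{p-1}{p}\,E,\qquad E=\#\bigl\{(q_1,q_2,q_1',q_2'):q_1q_2\equiv q_1'q_2'\pmod p\bigr\},
\]
and Cauchy--Schwarz on the representation function gives $E\ge\pi(X)^4/p\asymp p^{2}/(\log p)^4$, so the error is at least of order $p/(\log p)^2$, again far above $p^{1/2}$. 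No asymmetric split $MN=p^{3/2+o(1)}$ helps, since the same lower bound $E\ge\pi(M)^2\pi(N)^2/p$ persists.

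The obstruction is structural: detecting a \emph{single} residue class via additive characters forces a full sum over $p-1$ frequencies with unimodular coefficients, so one would need a saving of an entire factor of $p$ in the Kloosterman sum, which is out of reach. The paper sidesteps this by introducing a \emph{third} variable. It counts solutions to $\ell_1\ell_2 u\equiv a\pmod p$ with $\ell_1,\ell_2$ primes in $[L,2L]$, $L=p^{1/4+\varepsilon}$ (safely inside the range $L\le p^{1/3}$ of Lemma~\ref{lem:BilinSums}), and $u$ a free integer in $[1,p-1]$. The long interval in $u$ turns the question into one of equidistribution, and the Erd\H{o}s--Tur\'an inequality (Lemma~\ref{lem:congr-asymp}) converts the pointwise bound $|W_p(a;L)|\le L^{3/2}p^{1/8+o(1)}$ directly into the error term, with no loss of a factor $p$. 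One then sieves $u$ to be square-free by M\"obius over $d^2\mid u$ (using Lemma~\ref{lem:congr-bound} for the tail in $d$), and a short argument via Lemma~\ref{lem:congr-boundsquare} removes the few bad products with $\ell_1=\ell_2$ or $\ell_j\mid u$. The surviving representatives satisfy $\ell_1\ell_2 u\le 4L^2(p-1)\le p^{3/2+2\varepsilon+o(1)}$.
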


\begin{remark} It should be noted that the main result of Balog and  Pomerance~\cite{BaPom} gives the existence (without the additional condition of 
square-freeness) of a $p$-smooth integer $n\leq p^{7/4+\varepsilon}$ in arithmetic progressions modulo $p$. 
Our method can be used to derive further extensions of the results of~\cite{BaPom}. 
\end{remark}

One of the implications of our results is that in~\cite[Corollary~7]{BoPom} 
the value of $d$ can be taken to be reasonably small. 
Using a result of Irving~\cite{Irv}, we also show that for almost all $p$ one can break 
the $3/2$-threshold of Theorem~\ref{thm:M1}, see also our comments in 
Section~\ref{sec:tight} below. 

\begin{theorem}\label{thm:M1-AA}
As $Q\to \infty$,  for all  but $o(Q/\log Q)$ 
primes $p\in [Q,2Q]$,  we have
$$
M(p) \le   p^{4/3  + o(1)}.
%% p^{46/31  + o(1)}.
$$
\end{theorem}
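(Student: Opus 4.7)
The strategy is to mirror the proof of Theorem~\ref{thm:M1} but to replace Garaev's uniform bound~\cite{Gar1} on double Kloosterman sums by the sharper, on-average estimate due to Irving~\cite{Irv}, which is available for all but $o(Q/\log Q)$ primes $p\in[Q,2Q]$. Given a residue class $a\bmod p$ with $(a,p)=1$ (the class $a\equiv 0$ is realised trivially by $s=p$), I would count representations $s=m\ell\le M$ with $s\equiv a\pmod p$, where $m$ is a square-free product of primes in $[1,Z]$ and $\ell$ is a square-free product of primes in $(Z,p]$, with $Z$ a splitting parameter to be optimised later. Coprimality of $m$ and $\ell$ is then automatic, and guarantees that each resulting $s$ is itself square-free and $p$-smooth.

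Writing the count $N(a)$ via additive characters modulo $p$,
$$
N(a) = \frac{1}{p}\sum_{m}\sum_{\ell}\mathbf{1}_{m\ell\le M} + \frac{1}{p}\sum_{t=1}^{p-1}\ep(-ta)\sum_{m}\sum_{\ell}\ep(tm\ell)\mathbf{1}_{m\ell\le M},
$$
the first term supplies a main contribution of order roughly $\Psi^*(Z)\Psi^*(M/Z;Z,p)/p$, where $\Psi^*$ denotes the count of square-free $p$-smooth integers in the appropriate ranges, and this dominates provided $M$ is slightly larger than $p$. The error term is a bilinear form in $(m,\ell)$ over additive characters; after removing the hyperbolic truncation $m\ell\le M$ by Perron's formula or Abel summation and applying Cauchy--Schwarz in $\ell$, the inner sum reduces to a short Kloosterman-type sum modulo $p$, exactly the configuration addressed by Irving's bound.

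Balancing $Z$ against $M/Z$ and inserting Irving's non-trivial estimate, valid outside an exceptional set of size $o(Q/\log Q)$, allows the threshold to be pushed from $p^{3/2+o(1)}$ in Theorem~\ref{thm:M1} down to $p^{4/3+o(1)}$. The main obstacle I foresee is twofold. First, one must ensure that the bilinear form emerging after Cauchy--Schwarz really lies in the range in which Irving's bound beats the Weil bound, which restricts $Z$ to a narrow window and forces a careful choice of the dyadic splitting. Second, the exceptional set of primes must be controlled: after dyadic decomposition in the $m$ and $\ell$ variables and any further Cauchy--Schwarz step, the exceptional primes produced at each stage must be unioned over polylogarithmically many parameter ranges, and one has to verify that this union is still of size $o(Q/\log Q)$. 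Finally, implementing the square-freeness condition via the M\"obius function or a sieve argument, in the spirit of~\cite{Harm,HaSh}, should not cost more than a factor of $p^{o(1)}$ and must be checked not to spoil the savings from Irving's bound.
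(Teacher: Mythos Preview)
Your high-level idea---replace Garaev's bound by Irving's on-average Kloosterman estimate---is correct, but two concrete points in your outline do not match what is actually needed, and the second one is a genuine gap.

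First, your bilinear splitting $s=m\ell$ does not produce the sums to which Irving's result applies. Expanding $N(a)$ via additive characters as you indicate yields sums of the shape $\sum_{m}\sum_{\ell}\ep(tm\ell)$; after Cauchy--Schwarz these are dispersion-type sums, not Kloosterman sums. Irving's bound (Lemma~\ref{lem:BilinSums-AA}) is specifically for $W_p(a;L)=\sum_{\ell_1,\ell_2\in\cL}\ep(a\overline{\ell_1}\,\overline{\ell_2})$ with $\ell_1,\ell_2$ prime. The paper obtains exactly this structure by using a \emph{trilinear} factorisation $\ell_1\ell_2 u\equiv a\pmod p$ with $\ell_1,\ell_2\sim L$ prime and $u$ running over a full interval $[1,h]$; the Erd{\H o}s--Tur{\'a}n inequality then reduces the error to $W_p(ta;L)$. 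With a two-factor product of generic square-free integers one is stuck at the $p^{3/2}$ barrier noted in Section~\ref{sec:tight}.

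Second, and more importantly, your assertion that imposing square-freeness ``should not cost more than a factor of $p^{o(1)}$'' is precisely where the argument would fail. In the paper's setup one sieves $u$ via $\sum_{d}\mu(d)N_{ad^{-2},p}(L,h/d^2)$. In Theorem~\ref{thm:M1} (with $L\approx p^{1/4}$) the tail $d>D$ is controlled by the trivial Lemma~\ref{lem:congr-bound}, giving a harmless $h^{1/2}\approx p^{1/2}$ against a main term $\approx p^{1/2+2\varepsilon}$. But with $L\approx p^{1/6}$ the main term drops to $\approx p^{1/3+2\varepsilon}$, while the trivial tail bound is still $\approx p^{1/2}$ and now dominates. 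The paper closes this gap with a \emph{second} on-average ingredient, independent of Irving's: the bound on $T_{a,p}(U,V)$ in Lemma~\ref{lem:TIaa}, which rests on the averaged count $I_{r,p}(U)$ of solutions to $\sum 1/u_i^2\equiv\sum 1/u_j^2\pmod p$ from Lemma~\ref{lem:Irp Aver}. This is packaged as Lemma~\ref{lem:congr-bound-aver} and used for the large-$d$ range; without it the M\"obius tail is uncontrollable at the $p^{4/3}$ threshold. Your outline does not anticipate this obstruction.
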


\subsection{Some methods behind our results}

The proof of Theorem~\ref{thm:Malpha} is based on the ideas of~\cite{HaSh}, which 
are modified to accomodate the square-freeness condition and which, after some preparations in
Section~\ref{sec:sum Mob},  we develop in 
Section~\ref{sec:sumprod-AP}.  Furthermore, to make it work, instead of the 
 Burgess bound (see~\cite[Theorem~12.6]{IwKow}) used in~\cite{HaSh},  we apply some bounds from~\cite{Mun,MuTr}, 
presented in Section~\ref{sec:charsum}.  

For Theorem~\ref{thm:Malpha-p-AA} we use 
a different and more direct approach which is enabled by the fact that for almost all 
primes we have bounds of very short character sums from~\cite{MuSh}, which in turn is 
based on  some ideas of Garaev~\cite{Gar0} and which
we also present in Section~\ref{sec:charsum}.

For Theorems~\ref{thm:M1} and~\ref{thm:M1-AA} we use yet another approach
which is based on bounds of some double weighted Kloosterman-like sums from~\cite{Gar1} and~\cite{Irv}, 
respectively, see Section~\ref{sec:Ksum}. These bounds are used in 
Section~\ref{sec:congprod} to study some congruences with products of primes, 
which underlie our approach.
Furthermore, in the proof of Theorem~\ref{thm:M1-AA} we also use bounds 
for the number of small solutions of some quadratic congruences, see 
Section~\ref{sec:cong recip sq}.
%\commI{Added the last sentence} 

We introduce some general notation in  Section~\ref{sec:note} , which we then  follow throughout the paper and collect several useful facts on arithmetic functions in  Section~\ref{sec:arith funct}.

\subsection{On the tightness of our results}
\label{sec:tight} 

Clearly, the lower bounds on $\alpha$ in Theorem~\ref{thm:Malpha} cannot be improved
until the classical  bound of Burgess~\cite{Burg} on the smallest quadratic nonresidue is improved. 

Furthermore,  the upper bound of Theorem~\ref{thm:M1} also seems to be the best possible 
one can achieve
nowadays. In fact, even without any arithmetic restrictions on positive integers $u \le U$ and $v \le V$
one can guarantee the existence of a solution to $uv \equiv a \pmod p$ only for $UV \ge p^{3/2+\varepsilon}$ 
for some fixed $\varepsilon>0$, see~\cite[Section~3.1]{Shp0} for a survey of relevant results. 

\section{Preparations}

\subsection{General notation}
 \label{sec:note}

We recall that the notations $U = O(V)$,   $U \ll V$ and $U \ll V$  are all
equivalent to the assertion that the inequality $|U|\le cV$ holds for some
constant $c>0$. Throughout the paper, the implied constants in these symbols
may occasionally, where obvious, depend on the integer parameter $r\ge 1$ and are 
absolute otherwise.

 Throughout the paper, the letter $\ell$ and $p$, with and without subscripts,  always denote primes numbers. 
 
 As usual, we use $\mu(k)$, $\tau(k)$ and $\varphi(k)$ to denote the 
 M{\" o}bius, divisor and Euler functions of an integer $k \ge 1$, respectively.

We set 
$$
\psi = 2^{1/15} \mand \xi = \psi-1
$$
and write $a \sim A$ to indicate $a \in [A,\psi A]$. 

 We also write 
 \begin{equation}
\label{eq: rho}
\rho = e^{-1/2}.
 \end{equation} 
 
 \subsection{Some properties of arithmetic functions}
 \label{sec:arith funct}
 
We recall the well-known elementary identity
 \begin{equation}\label{inversion}
\sum_{d \mid \gcd(n,q)} \mu(d) = \begin{cases} 1 &\text{if $\gcd(n,q)=1$,}\\ 0 &\text{otherwise.}
\end{cases}.
\end{equation}

We also note that by  the Mertens formula (see~\cite[Equation~(2.15)]{IwKow}), for any real $Y > X \ge 2$ we have
 \begin{equation}
\label{eq:Mer}
\sum_{X \le \ell \le Y}\frac{1}{\ell} = \log \frac{\log Y}{\log X}  +O\(\frac{1}{\log X}\).
 \end{equation} 
 
 In particular, it easily follows from~\eqref{eq:Mer} that 
  \begin{equation}
\label{eq:phi}
\frac{\varphi(k)}{k} = \prod_{ \ell \mid k}\(1-\frac{1}{\ell}\) \gg \frac{1}{\log \log k} 
 \end{equation} 
 for any integer $k \ge 3$
 
We also need the classical bound
  \begin{equation}
\label{eq:tau}
 \tau(k) = k^{o(1)}, 
 \end{equation} 
on the divisor function, see, for example,~\cite[Equation~(1.81)]{IwKow}.

We recall that by~\cite[Lemma~2.5 (2)]{Mun} we have:

\begin{lemma}\label{lem: SqFAP}
For any $M > 0$ and $q \ge 2$ we have
$$
\sum_{\substack{m\sim M \\ \gcd(m,q)=1}} \mu^2(m) = \frac{\xi}{\zeta(2)}\prod_{p\mid q}\(1+\frac{1}{p}\)^{-1} M  + O(M^{1/2}\tau(q))\,.
$$
\end{lemma}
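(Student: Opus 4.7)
\smallskip

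\noindent\textbf{Proof plan.} The statement is a standard count of square-free integers in a short interval coprime to $q$, and the most direct route is through the classical identity $\mu^2(m) = \sum_{d^2\mid m}\mu(d)$. Inserting this into the sum and swapping the order of summation gives
\begin{equation*}
\sum_{\substack{m\sim M \\ \gcd(m,q)=1}} \mu^2(m)
= \sum_{\substack{d\le (\psi M)^{1/2} \\ \gcd(d,q)=1}} \mu(d) \, N(d),
\qquad N(d) := \#\{n\in [M/d^2,\psi M/d^2]:\gcd(n,q)=1\},
\end{equation*}
where the coprimality restriction on $d$ is automatic: if $\gcd(d,q)>1$ then no multiple of $d^2$ can be coprime to $q$.

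\smallskip

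Next, to evaluate $N(d)$, I would apply \eqref{inversion} to detect $\gcd(n,q)=1$, yielding
\begin{equation*}
N(d) = \sum_{e\mid q}\mu(e)\left(\frac{\xi M}{d^2 e}+O(1)\right) = \frac{\xi M}{d^2}\,\frac{\varphi(q)}{q}+O(\tau(q)).
\end{equation*}
Substituting back and summing the error over $d\le (\psi M)^{1/2}$ produces a total error of $O(M^{1/2}\tau(q))$, which is exactly the error term in the claim. The main term becomes
\begin{equation*}
\frac{\xi M\, \varphi(q)}{q}\sum_{\substack{d\le (\psi M)^{1/2}\\ \gcd(d,q)=1}}\frac{\mu(d)}{d^2}.
\end{equation*}

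\smallskip

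To finish, I would complete the sum to infinity. The tail $\sum_{d>(\psi M)^{1/2}} d^{-2}\ll M^{-1/2}$ contributes at most $O(M^{1/2}\varphi(q)/q)=O(M^{1/2})$, absorbed in the error. The completed sum evaluates as an Euler product,
\begin{equation*}
\sum_{\substack{d\ge 1\\ \gcd(d,q)=1}}\frac{\mu(d)}{d^2}=\prod_{\ell\nmid q}\left(1-\frac{1}{\ell^2}\right)=\frac{1}{\zeta(2)}\prod_{\ell\mid q}\left(1-\frac{1}{\ell^2}\right)^{-1}.
\end{equation*}
Combining this with $\varphi(q)/q=\prod_{\ell\mid q}(1-1/\ell)$ and using the factorisation $1-1/\ell^2=(1-1/\ell)(1+1/\ell)$ collapses the product to $\prod_{\ell\mid q}(1+1/\ell)^{-1}$, which gives exactly the stated main term.

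\smallskip

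The calculation is essentially routine; the only thing to be careful about is the bookkeeping of the coprimality condition on $d$ (ensuring one factors the Euler product correctly rather than mixing primes dividing $q$ with those that do not) and verifying that the tail and the error from the coprimality detection both fit inside $O(M^{1/2}\tau(q))$. Neither presents a real obstacle, so no novel input beyond \eqref{inversion} is required.
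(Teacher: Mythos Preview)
Your argument is correct and is the standard route to this estimate. Note that the paper does not give its own proof of this lemma: it is quoted from~\cite[Lemma~2.5(2)]{Mun}, and your derivation via $\mu^2(m)=\sum_{d^2\mid m}\mu(d)$ together with M\"obius detection of the coprimality condition is exactly the expected proof (and almost certainly the one in~\cite{Mun}).
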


Furthermore, by~\cite[Lemma~7]{HaSh} we have the following upper bound:

\begin{lemma}\label{lem: AP bound}
For any $M >  \log q \ge 2$ we have
$$
\sum_{\substack{m\sim M \\ \gcd(m,q)=1}} 1 \ll \frac{\varphi(q)}{q}M\,.
$$
\end{lemma}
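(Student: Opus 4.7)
The plan is to apply the M\"obius identity~\eqref{inversion} to the indicator of coprimality, which gives
$$
\sum_{\substack{m \sim M \\ \gcd(m,q)=1}} 1 \;=\; \sum_{d \mid q} \mu(d)\, \bigl(\lfloor \psi M/d\rfloor - \lfloor M/d\rfloor\bigr).
$$
Replacing each floor by its real value pulls out the main term
$\xi M \sum_{d\mid q} \mu(d)/d = \xi M \cdot \varphi(q)/q$, which is exactly of the order claimed. The floor-function discretisation errors are bounded by $1$ per squarefree divisor of $q$, giving
$$
\sum_{\substack{m \sim M \\ \gcd(m,q)=1}} 1 \;=\; \xi M \cdot \frac{\varphi(q)}{q} + O(\tau(q)).
$$

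In the easy range $M \ge q^{\varepsilon}$ for any fixed $\varepsilon>0$, this identity immediately suffices: by the divisor bound~\eqref{eq:tau} and the Mertens-type lower bound~\eqref{eq:phi}, we have $\tau(q) = q^{o(1)}$ while $(\varphi(q)/q) M \gg M/\log\log q$, so the error is swallowed by the main term. The real work lies in the complementary range $\log q < M < q^{\varepsilon}$, where $\tau(q)$ may genuinely exceed $(\varphi(q)/q) M$.

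To treat that regime, I would replace exact M\"obius inversion by a Brun-type upper-bound sieve, restricting the inclusion-exclusion to squarefree divisors $d \mid q$ with $\omega(d) \le r$ for a suitable depth $r \asymp \log\log q$. The tail of the main term, namely $\xi M \sum_{d\mid q,\ \omega(d)>r} \mu(d)/d$, is controlled via the elementary symmetric expansion of $\prod_{\ell\mid q}(1-1/\ell)$ combined with the Mertens estimate~\eqref{eq:Mer} for $\sum_{\ell\mid q} 1/\ell$; the truncation error is bounded by the number of surviving divisors, which is essentially $\binom{\omega(q)}{\le r}$.

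The main obstacle is the calibration of the truncation depth $r$ in the critical range $M\approx \log q$: one must choose $r$ large enough that the M\"obius tail is a small fraction of $\varphi(q)/q$, yet small enough that the combinatorial error is absorbed into $(\varphi(q)/q) M$, which itself may be as small as $\log q/\log\log q$. Using the crude bound $\omega(q) \ll \log q/\log\log q$ together with $M > \log q$, a standard choice of $r$ of order $\log\log q$ makes both estimates work simultaneously; the technical execution follows the template of~\cite[Lemma~7]{HaSh}, yielding the lemma with an absolute implicit constant.
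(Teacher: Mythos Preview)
The paper itself does not prove this lemma; it simply invokes~\cite[Lemma~7]{HaSh}. Your proposed argument, however, has a real gap in the hard range $\log q < M < q^{\varepsilon}$.

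The Bonferroni truncation (Brun's \emph{pure} sieve) that you describe cannot be balanced as you claim. Take $q=\prod_{p\le z}p$ to be a primorial, so that $\log q\sim z$, $\omega(q)\sim z/\log z$, and $\varphi(q)/q\asymp 1/\log z$. With $M$ just above $\log q$ the target $(\varphi(q)/q)M$ is of order $z/\log z$. Your floor-function error $\sum_{j\le r}\binom{\omega(q)}{j}$ is already of order $(z/\log z)^{2}$ at $r=2$, exceeding the target; for $r=0$ the bound degenerates to the trivial $\xi M$. With your suggested $r\asymp\log\log q\asymp\log z$ the error is $\exp\bigl(c(\log\log q)^{2}\bigr)$, vastly larger than $\log q$. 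In fact for this $q$ even the truncated \emph{main term} $\xi M\sum_{\omega(d)\le r}\mu(d)/d$ fails to be $\ll M\varphi(q)/q$ until $r$ exceeds a multiple of $\log\log z$, by which point the error has long since exploded. So no admissible $r$ exists, contrary to your final paragraph.

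What is actually needed is a sieve whose remainder is controlled by a level $D$ rather than by a divisor count --- Selberg's sieve or the fundamental lemma of sieve theory. Sieving only by the primes $p\mid q$ with $p\le z$, where $z$ is a small multiple of $M/\log\log q$, the linear upper-bound sieve gives
$$
\sum_{\substack{m\sim M\\\gcd(m,q)=1}}1 \;\le\; S(\cA,\cP,z)\;\ll\; M\prod_{\substack{p\mid q\\p\le z}}\Bigl(1-\frac{1}{p}\Bigr)\;+\;O(z).
$$
The omitted factor $\prod_{p\mid q,\,p>z}(1-1/p)^{-1}$ is $O(1)$, since there are at most $\log q/\log z$ such primes, each exceeding $z$, and $\log q/(z\log z)=O(1)$ for this choice of $z$; the remainder $z\ll M/\log\log q\ll M\varphi(q)/q$ by~\eqref{eq:phi}. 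This is the shape of argument behind~\cite[Lemma~7]{HaSh}; the pure inclusion--exclusion scheme you outline is not strong enough to reach it.
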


\section{Bounds of exponential and character sum and the number of solutions to some congruences}

\subsection{Character sums}
\label{sec:charsum}

Let $\cX_q$ be the set of
{\it multiplicative\/} characters of the residue ring modulo $q \ge 1$ 
and let  $\cX_q^*=\cX_q\setminus\{\chi_0\}$ be the set of   
{\it nonprincipal\/} characters; we refer the reader to~\cite[Chapter~3]{IwKow} for the relevant background.
In particular, we make use of the following orthogonality property of 
characters, see~\cite[Section~3.2]{IwKow},
\begin{equation}
\label{eq:orth}
\frac{1}{\varphi(q)}\sum_{\chi\in \cX_q} \chi(a)
 = \begin{cases} 
 1,  & \text{if}\ a \equiv 1 \pmod q,\\
0,  & \text{otherwise,}
\end{cases}
\end{equation} 
which holds for any integer $a$ with $\gcd(a,q)=1$.

Our argument rests on the existence of a good bound for the sums
\begin{equation}
\label{eq:sum s-f}
S^\sharp_\chi(t) = \sum_{\substack{1\le s \le t\\s~\text{square-free}}} \chi(s)
\end{equation} 
of characters $\chi\in\cX_q^*$ over square-free integers $s \in [1,t]$.

In particular, we need the following bound of Munsch and Trudgian~\cite[Lemma~7]{MuTr} which has been as previously stated for $r=2$ in~\cite[Lemma~3.2]{Mun}. In fact we also formulate this in a more general form 
which cover arbitrary moduli $q$, rather than only cube-free $q$. 

\begin{lemma}
\label{lem:CharSquarfree} 
 For any integer $q$ and a positive integer $t \le q$ and 
 \begin{itemize}
\item  for any fixed integer $r\geq 2$ if $q$ cube-free,
\item for $r = 2, 3$ for any $q$,
\end{itemize}
 we have
$$
 \max_{\chi\in\cX_q^*}
\left|S^\sharp_\chi(t)\right|\le t^{1-1/r}q^{(r+1)/(4r^2)+o(1)}, 
$$
as $q\to \infty$. 
\end{lemma}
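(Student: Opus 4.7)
The strategy is to reduce the square-free character sum $S^\sharp_\chi(t)$ to ordinary character sums via the standard identity $\mu^2(s) = \sum_{d^2 \mid s} \mu(d)$ and then apply the Burgess bound; this is precisely where the exponent $(r+1)/(4r^2)$ in the target estimate originates. The restriction to $r\in\{2,3\}$ for arbitrary $q$, versus $r\ge 2$ for cube-free $q$, simply mirrors the range of validity of Burgess's theorem itself (see, e.g.,~\cite[Theorem~12.6]{IwKow}).

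The first step is to substitute the identity and interchange summation to obtain
\begin{align*}
S^\sharp_\chi(t) \;=\; \sum_{d \le \sqrt{t}} \mu(d)\,\chi(d)^{2}\, T_\chi\!\left(t/d^{2}\right), \qquad T_\chi(N):=\sum_{n\le N}\chi(n).
\end{align*}
Since $\chi(d)^{2}$ vanishes unless $\gcd(d,q)=1$, each inner sum is a genuine sum of the nonprincipal character $\chi$.

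Next I would fix a parameter $D\in[1,\sqrt{t}]$ and split the outer range at $d=D$. For $d\le D$ I would invoke Burgess in the form
\[
|T_\chi(N)| \;\le\; N^{1-1/r}\, q^{(r+1)/(4r^{2})+o(1)},
\]
valid (in the prescribed regime of $r$) as long as $N$ exceeds the Burgess threshold $q^{1/4+\varepsilon}$. Since $2(1-1/r)\ge 1$ for $r\ge 2$, the resulting sum $\sum_{d\le D} d^{-2(1-1/r)}$ is $q^{o(1)}$, so this portion contributes at most $t^{1-1/r}q^{(r+1)/(4r^{2})+o(1)}$. For $d>D$ I would use the trivial bound $|T_\chi(t/d^{2})|\le t/d^{2}$, whose sum is $O(t/D)$.

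Finally, choosing $D = t^{1/r} q^{-(r+1)/(4r^{2})}$ balances the two contributions and yields the claimed estimate. One still has to verify that the Burgess hypothesis $t/d^{2}\gg q^{1/4+\varepsilon}$ holds uniformly for $d\le D$, which unwinds to a mild lower bound on $t$ in terms of $q$; whenever that fails, the trivial estimate $|S^\sharp_\chi(t)|\le t$ is already stronger than the target bound, so the lemma holds in that regime for free. The main obstacle is therefore not analytic but notational, namely verifying uniformity of the $q^{o(1)}$ factor across the two regimes and across the admissible values of $r$, together with keeping track of the cube-free hypothesis solely through its role in the Burgess input.
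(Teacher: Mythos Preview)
Your argument is correct and follows the standard route to such bounds: detect square-freeness via $\mu^2(s)=\sum_{d^2\mid s}\mu(d)$, reduce to ordinary character sums $T_\chi(t/d^2)$, split the $d$-range at a parameter $D$, apply Burgess on the short part and the trivial bound on the tail, then optimise $D$. One minor remark: the Burgess inequality $|T_\chi(N)|\le N^{1-1/r}q^{(r+1)/(4r^2)+o(1)}$ is valid for \emph{all} $N\ge 1$ (it merely becomes weaker than the trivial bound for small $N$), so your threshold discussion, while harmless, is not strictly necessary---you may apply Burgess uniformly for $d\le D$ without first checking $t/d^2\gg q^{1/4+\varepsilon}$. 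The case $D<1$ is then the only degenerate one, and you have already observed that there the target bound exceeds $t$.

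As for comparison with the paper: the paper does not supply its own proof of this lemma. It is quoted directly from~\cite[Lemma~7]{MuTr} (with the case $r=2$ going back to~\cite[Lemma~3.2]{Mun}), and the authors only remark that they state it in a form covering arbitrary moduli $q$. Your sketch is precisely the argument that underlies those cited references, so in substance your approach coincides with what the paper is invoking.
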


In particular, we have
\begin{cor}
\label{cor:CharSquarfree-eps} 
There exists an absolute constant $c_0>0$ such that for any real $\varepsilon>0$  
and a positive integer $t$ with
 \begin{itemize}
\item  $t \in [q^{1/4 + \varepsilon}, q]$ if $q$ cube-free,
\item  $t \in [q^{1/3 + \varepsilon}, q]$ for any $q$,
\end{itemize}
we have
$$
 \max_{\chi\in\cX_q^*}
\left|S^\sharp_\chi(t)\right|\le t^{1 - c_0\varepsilon^2 }. 
$$
\end{cor}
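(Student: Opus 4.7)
The plan is to write $t = q^{\beta}$ where $\beta = \log t/\log q$, lying in $[1/4+\varepsilon,1]$ in the cube-free case and in $[1/3+\varepsilon,1]$ in the general case. Since $\beta$ is bounded below by $1/4$, the factor $q^{o(1)}$ appearing in Lemma~\ref{lem:CharSquarfree} is absorbed into $t^{o(1)}$, so for every integer $r$ allowed by the lemma we get
$$
|S^{\sharp}_{\chi}(t)| \le t^{1-E(r,\beta)+o(1)},
\qquad
E(r,\beta):=\frac{1}{r}-\frac{r+1}{4r^{2}\beta}=\frac{(4\beta-1)r-1}{4r^{2}\beta}.
$$
The corollary therefore reduces to choosing an admissible integer $r$ so that $E(r,\beta)\ge c_{0}\varepsilon^{2}$ for some absolute constant $c_{0}>0$; the $o(1)$ term is then absorbed by slightly shrinking $c_{0}$ and assuming $q$ is large in terms of $\varepsilon$ (implicit in the asymptotic regime of the lemma).

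A one-line calculus check shows $E(\cdot,\beta)$ is maximized over real positive arguments at $r=2/(4\beta-1)$, where its value is $(4\beta-1)^{2}/(16\beta)$. This is the target to approximate with an integer. In the cube-free case, I would set $\gamma:=4\beta-1\ge 4\varepsilon$ and take $r=\lceil 2/\gamma\rceil$. For $\gamma\le 1$ (i.e.\ $\beta\le 1/2$) this integer is $\ge 2$, satisfies $\gamma r\in[2,2+\gamma]\subseteq[2,3]$ and $r\le 3/\gamma$, so
$$
E(r,\beta)\ \ge\ \frac{1}{4r^{2}\beta}\ \ge\ \frac{\gamma^{2}}{36\beta}\ \ge\ \frac{(4\varepsilon)^{2}}{36}\ =\ \frac{4\varepsilon^{2}}{9},
$$
using $\beta\le 1$ in the last step. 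For $\beta>1/2$ we can simply take $r=2$, which gives $E(2,\beta)=(8\beta-3)/(16\beta)\ge 1/16$, comfortably dominating $c_{0}\varepsilon^{2}$.

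In the general-modulus case we are restricted to $r\in\{2,3\}$, but the hypothesis $\beta\ge 1/3+\varepsilon$ is stronger, and the choice $r=3$ gives
$$
E(3,\beta)\ =\ \frac{3\beta-1}{36\beta}\ \ge\ \frac{3\varepsilon}{36}\ =\ \frac{\varepsilon}{12},
$$
which trivially beats any $c_{0}\varepsilon^{2}$ for $\varepsilon$ small. Combining the two cases, any fixed $c_{0}<4/9$ works.

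The proof is essentially arithmetic; no single step is a real obstacle. The only mild subtlety is the choice of integer $r$ in the cube-free regime with $\beta$ close to $1/4$, where $r$ must be taken of order $1/\varepsilon$ to get a genuine power saving; it is precisely this need for large $r$, combined with the lemma's restriction to $r\in\{2,3\}$ for non-cube-free $q$, that forces the larger lower bound $1/3+\varepsilon$ on $\beta$ in the general case.
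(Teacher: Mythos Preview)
Your approach is correct and is exactly what the paper intends (the corollary is stated there without proof, as an immediate consequence of Lemma~\ref{lem:CharSquarfree}): write $t=q^{\beta}$ and choose the integer $r$ in the lemma so that the exponent saving $E(r,\beta)=1/r-(r+1)/(4r^{2}\beta)$ is at least $c_{0}\varepsilon^{2}$. Two harmless slips: in the general-modulus case $E(3,\beta)=1/3-1/(9\beta)=(3\beta-1)/(9\beta)$, not $(3\beta-1)/(36\beta)$, which only strengthens your bound to $\varepsilon/3$; and your closing claim that ``any fixed $c_{0}<4/9$ works'' is slightly optimistic once you track the case $\beta>1/2$ with $\varepsilon$ close to $3/4$, but a somewhat smaller absolute constant (e.g.\ $c_{0}=2/9$) certainly does.
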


Finally, we need the following simple bound  which follows from 
the orthogonality of characters
and  which we  refer to as the {\it mean-value estimate for character sums\/}.

\begin{lemma} 
\label{lem:Aver}
For $N \ge 1$ and any sequence of complex numbers $a_n$ we have
$$
\sum_{\chi \in \cX} \left| \sum_{n \le N} a_n \chi(n) \right|^2 \le \varphi(q) (N/q + 1) \sum_{n \le N} |a_n|^2.
$$
\end{lemma}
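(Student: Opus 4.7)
The plan is to prove this by the standard expand-and-use-orthogonality argument, which reduces the estimate to counting pairs $(m,n)$ in the same residue class modulo $q$. First I would expand the square as a double sum over $m,n \le N$ and swap the sum over $\chi$ inside, obtaining
$$
\sum_{\chi \in \cX_q} \left| \sum_{n \le N} a_n \chi(n) \right|^2 = \sum_{m,n \le N} a_m \overline{a_n} \sum_{\chi \in \cX_q} \chi(m) \overline{\chi(n)}.
$$

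Next I would invoke the orthogonality relation \eqref{eq:orth}: writing $\chi(m)\overline{\chi(n)} = \chi(m \overline{n})$ when $\gcd(n,q)=1$ (and noting both factors vanish otherwise), the inner sum equals $\varphi(q)$ when $m \equiv n \pmod q$ with $\gcd(mn,q)=1$, and $0$ otherwise. Hence the total becomes
$$
\varphi(q) \sum_{\substack{m,n \le N \\ m \equiv n \!\!\pmod q}} a_m \overline{a_n}.
$$
Applying the elementary bound $|a_m \overline{a_n}| \le \tfrac{1}{2}(|a_m|^2 + |a_n|^2)$ and using that for each fixed $m \le N$ the number of $n \le N$ with $n \equiv m \pmod q$ is at most $\lfloor N/q \rfloor + 1 \le N/q + 1$ (and symmetrically in $n$), the claimed bound follows.

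There is no real obstacle here; the only minor point to watch is that one should drop the coprimality restriction $\gcd(mn,q)=1$ when passing to the upper bound, since this only enlarges the sum and is compatible with the right-hand side. The rest is just mechanical bookkeeping with the AM-GM step and the counting of residues in $[1,N]$.
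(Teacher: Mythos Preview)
Your argument is correct and is exactly the standard expand-and-orthogonality computation that the paper has in mind: the paper does not spell out a proof but simply states that the bound ``follows from the orthogonality of characters,'' which is precisely what you carry out. There is nothing to add.
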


We present a special case of~\cite[Lemma~4]{MuSh} in the following form 
convenient for our applications. 

\begin{lemma}
\label{lem:CharSquarfree-AA} Let $t$ and $Q$  be sufficient large positive
integers with $Q \ge t^\varepsilon$ for some fixed $\varepsilon > 0$.
Then for any $\delta< 1/4$ and
$$
 \vartheta  = \min\{ (1-2 \delta) \gamma, 2 \delta\(1- \gamma\)\}
$$
where $\gamma$ is the following fractional part
$$
\gamma = \left\{\frac{2 \log Q}{\log t}\right\}
$$
  for all but at most 
$Q^{4\delta} t^{\vartheta +o(1)}$  primes $p \le Q$ we have
$$
\max_{\chi\in\cX_p^*}
\left|S^\sharp_\chi(t)\right|\le t^{1-\delta}. 
$$
\end{lemma}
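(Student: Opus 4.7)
The plan is to use a high-moment argument in the spirit of~\cite[Lemma~4]{MuSh}. For a positive integer $k$, I would consider the $2k$-th moment
$$
T_k = \sum_{p \le Q} \sum_{\chi \in \cX_p^*} |S^\sharp_\chi(t)|^{2k},
$$
so that by Markov's inequality, the number of primes $p \le Q$ admitting some $\chi \in \cX_p^*$ with $|S^\sharp_\chi(t)| > t^{1-\delta}$ is at most $T_k/t^{2k(1-\delta)}$. The task thus reduces to a sharp estimate for $T_k$ and an optimal choice of integer $k$.

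To estimate $T_k$, I would first use orthogonality of characters to write
$$
\sum_{\chi \in \cX_p} |S^\sharp_\chi(t)|^{2k} = \varphi(p)\, N_k(p,t),
$$
where $N_k(p,t)$ counts $2k$-tuples $(s_1,\ldots,s_{2k})$ of squarefree integers in $[1,t]$ coprime to $p$ satisfying $s_1 \cdots s_k \equiv s_{k+1} \cdots s_{2k} \pmod p$. After subtracting the principal character contribution $|S^\sharp_{\chi_0}(t)|^{2k}$ I would split $N_k = E_k + M_k$ into equality tuples (with $s_1\cdots s_k = s_{k+1}\cdots s_{2k}$) and proper congruences. The equality count is bounded by $E_k \le t^{k+o(1)}$ via a standard estimate on the associated multiplicative function, whose $\ell$-th Euler factor is $1 + k^2/\ell$. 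For the proper-congruence contribution, I would swap summation and use that each nonzero $D = s_1 \cdots s_k - s_{k+1} \cdots s_{2k}$ satisfies $|D| \le 2t^k$ together with the elementary bound $\sum_{p \mid D} p \le |D|$. The crucial structural point is that the bulk of $\sum_p \varphi(p) M_k(p,t)$ comes from the ``expected'' congruence density $(\text{non-eq tuples})/p$, which after multiplication by $\varphi(p) \sim p$ matches $|S^\sharp_{\chi_0}(t)|^{2k}$ and cancels against the principal subtraction. What remains is $\sum_{p \le Q}(p-1) E_k \sim E_k \cdot Q^2/(2\log Q)$, producing
$$
T_k \le Q^{2 + o(1)} \, t^{k}.
$$

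Combining this with Markov's inequality gives a bound $Q^{2+o(1)}/t^{k(1-2\delta)}$ on the number of bad primes. The critical choice is $k = 2\log Q/\log t$, which makes $t^{k(1-2\delta)} = Q^{2-4\delta}$ and collapses the bound to $Q^{4\delta + o(1)}$. Since $k$ must be a positive integer, I would test both $k = \lfloor 2\log Q/\log t\rfloor$ and $k = \lceil 2\log Q/\log t\rceil$; with $\gamma = \{2\log Q/\log t\}$, the two choices yield candidate bounds of the form $Q^{4\delta} t^{(1-2\delta)\gamma + o(1)}$ and $Q^{4\delta} t^{2\delta(1-\gamma) + o(1)}$ respectively, and taking the minimum gives exactly the stated bound $Q^{4\delta} t^{\vartheta + o(1)}$.

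The main obstacle is rigorously establishing the cancellation between $\sum_p \varphi(p) M_k(p,t)$ and $\sum_p |S^\sharp_{\chi_0}(t)|^{2k}$. Bounding each of these sums separately by the crude estimate $\sum_{\text{non-eq}} |D| \le t^{3k+o(1)}$ only yields $T_k \le Q \cdot t^{2k + o(1)}$ and hence at best $Q^{1+2\delta}$ bad primes---off by a factor $Q^{1-2\delta}$ from the target. Extracting the sharper $Q^{4\delta}$ factor requires exploiting the near-matching of the two sums and controlling their discrepancy, which is the technical heart of~\cite[Lemma~4]{MuSh} and the essential ingredient that must be adapted to the square-free setting here.
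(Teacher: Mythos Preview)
The paper itself does not prove this lemma; it simply records it as a special case of \cite[Lemma~4]{MuSh}. Your high-moment framework and the optimisation over the integer $k$ are correct and lead exactly to the stated exponent $\vartheta$. However, the route you propose to the central estimate $T_k \le Q^{2+o(1)}t^{k}$ is more involved than necessary, and the ``cancellation'' you isolate as the main obstacle is not something that has to be extracted by hand.

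The point is that the multiplicative large sieve delivers this cancellation in one stroke. Writing
\[
\bigl|S^\sharp_\chi(t)\bigr|^{2k}=\Bigl|\sum_{m\le t^{k}}r_k(m)\,\chi(m)\Bigr|^{2},
\]
where $r_k(m)$ counts ordered factorisations $m=s_1\cdots s_k$ with each $s_i\le t$ squarefree, the large sieve over primitive characters modulo primes $p\le Q$ gives directly
\[
T_k \;\ll\; (Q^{2}+t^{k})\sum_{m}r_k(m)^{2}\;=\;(Q^{2}+t^{k})\,E_k\;\le\;(Q^{2}+t^{k})\,t^{k+o(1)}.
\]
With this bound your Markov step and choice of $k$ go through unchanged: for $k=\lfloor 2\log Q/\log t\rfloor$ one has $t^{k}\le Q^{2}$, the term $Q^{2}$ dominates, and the number of exceptional primes is at most $Q^{4\delta}t^{(1-2\delta)\gamma+o(1)}$; for $k=\lceil 2\log Q/\log t\rceil$ one has $t^{k}\ge Q^{2}$, the term $t^{k}$ dominates, and the bound becomes $Q^{4\delta}t^{2\delta(1-\gamma)+o(1)}$. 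Taking the minimum yields $\vartheta$ exactly as stated.

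Your combinatorial expansion of $N_k(p)$ would ultimately reproduce this inequality --- it amounts to re-proving the large sieve in this particular setting --- but invoking the large sieve directly removes the difficulty you flag and requires no argument specific to the squarefree weights beyond the divisor-type bound $E_k\le t^{k+o(1)}$, which you already have.
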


\subsection{Double Kloosterman sums with prime arguments}
\label{sec:Ksum}

 For a prime $p$,  we define $\ep(z) = \exp(2 \pi i z/p)$ and consider 
  the exponential sums
$$
W_p(a;L)= \sum_{\ell_1,\ell_2 \in \cL} \ep\(a \overline \ell_1  \overline \ell_2\),
$$
where $\cL$ is the set of primes $\ell \in [L,2L]$ with $\gcd(\ell,p) =1$ 
and for an integer $k$ with $\gcd(k,p)=1$ 
we use $\overline k$ to denote the multiplicative inverse of $k$ modulo $p$, 
that is, the unique integer with 
$$
k \overline k \equiv 1 \pmod p \mand 1 \le \overline k < p.
$$
We now record the following bound which follows from the proof of~\cite[Lemma~2.4]{Gar1}.

\begin{lemma}
\label{lem:BilinSums} For $1\le L \le p^{1/3} $ we have
$$
\left| W_p(a;L) \right| \le  L^{3/2} p^{1/8+o(1)},
$$
as $p\to \infty$. 
\end{lemma}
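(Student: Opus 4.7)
The plan is to bound $W_p(a;L)$ via two applications of Cauchy--Schwarz combined with completion to a full Kloosterman sum, to which Weil's bound applies. A single Cauchy--Schwarz will only yield the weaker exponent $p^{1/4}$, so a second round of Cauchy--Schwarz and completion is needed to reach $p^{1/8}$.

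First I would apply Cauchy--Schwarz with respect to the variable $\ell_1$, so that
$$
|W_p(a;L)|^2 \ll L\sum_{\ell_1\in\cL}\left|\sum_{\ell_2\in\cL}\ep\bigl(a\overline{\ell_1}\,\overline{\ell_2}\bigr)\right|^2.
$$
Since the inner factor is a squared modulus and hence nonnegative, I would then enlarge the outer sum from primes to all integers $m\sim L$ coprime to $p$, expand the square, and reverse the order of summation. This produces a sum of incomplete Kloosterman sums
$$
S(\lambda) = \sum_{\substack{L\le m\le 2L\\ \gcd(m,p)=1}}\ep(a\lambda\,\overline m),\qquad \lambda = \overline{\ell_2}-\overline{\ell_2'}.
$$

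Next I would complete each $S(\lambda)$ modulo $p$ by Fourier-expanding the indicator of $[L,2L]$ and invoking Weil's bound for the complete Kloosterman sum, which gives $|S(\lambda)|\ll p^{1/2+o(1)}$ whenever $\lambda\not\equiv 0\pmod p$. The diagonal $\ell_2=\ell_2'$ contributes $\ll L^3$; bounding the off-diagonal naively by the Weil estimate in each term already yields the intermediate inequality $|W_p|\ll L^{3/2}p^{1/4+o(1)}$, which is not yet strong enough.

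To reduce the exponent from $1/4$ to $1/8$, I would apply a second Cauchy--Schwarz on the difference variable, estimating
$$
\sum_{\ell_2\neq\ell_2'}|S(\overline{\ell_2}-\overline{\ell_2'})| \le L\Bigl(\sum_{\ell_2,\ell_2'}|S(\overline{\ell_2}-\overline{\ell_2'})|^2\Bigr)^{1/2}.
$$
Expanding $|S(\lambda)|^2$ introduces a second pair $(m,m')$; the resulting quadruple sum over $(\ell_2,\ell_2',m,m')$ rearranges into a fresh completed Kloosterman sum modulo $p$, to which Weil's bound applies once more, yielding an additional square-root gain. Combining everything with the trivial diagonal contributions produces the claimed bound $|W_p(a;L)|\ll L^{3/2}p^{1/8+o(1)}$.

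The principal obstacle is the second Cauchy--Schwarz and completion step: one has to identify a variable in which a non-degenerate Weil-type sum appears after the rearrangement, so that the second application of the Weil bound does not collapse into the trivial estimate. The hypothesis $L\le p^{1/3}$ is used precisely here, guaranteeing that the Fourier completion errors at both stages remain subdominant to the Weil contribution and that the two square-root savings balance against the diagonal to give the sharp exponent $1/8$; outside this range one is forced back on the weaker $p^{1/4}$ intermediate bound.
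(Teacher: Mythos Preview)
Your first Cauchy--Schwarz step is fine and does give the intermediate bound $|W_p(a;L)|\ll L^{3/2}p^{1/4+o(1)}$. The gap is in the second step: the claim that after a further Cauchy--Schwarz the quadruple sum ``rearranges into a fresh completed Kloosterman sum'' is not correct, and no extra square-root saving materialises. Concretely, after extending $\ell_2,\ell_2'$ to all integers $u,v\sim L$ by positivity, one has
\[
\sum_{u,v\sim L}\bigl|S(\overline u-\overline v)\bigr|^{2}
=\sum_{m,m'\sim L}\Bigl|\sum_{u\sim L}\ep\bigl(a(\overline m-\overline{m'})\,\overline u\bigr)\Bigr|^{2},
\]
which is again a sum of squared \emph{incomplete} Kloosterman sums. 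Bounding each inner sum by Weil gives $\ll L^{3}+L^{2}p^{1+o(1)}$, whence $\sum_{\ell_2\neq\ell_2'}|S(\overline{\ell_2}-\overline{\ell_2'})|\ll L^{2}p^{1/2+o(1)}$ and $|W_p|^{2}\ll L^{3}p^{1/2+o(1)}$, i.e.\ exactly the $p^{1/4}$ bound you already had. Two rounds of Cauchy--Schwarz followed by Weil simply reproduce one another; the second application cannot by itself drop the exponent to $1/8$.

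What is actually required, and what the paper imports from Garaev~\cite[Lemma~2.3]{Gar1}, is a nontrivial bound for the \emph{fourth} moment of incomplete Kloosterman sums, equivalently for the additive energy of the set of reciprocals $\{\overline m:\ m\sim L\}$. That lemma yields a factor of the shape $L^{7/2}p^{-1/2}+L^{2}$ (up to $p^{o(1)}$), which for $L\le p^{1/3}$ collapses to $L^{2}$; this is precisely where the hypothesis $L\le p^{1/3}$ enters. Feeding this into an eighth-moment inequality gives
\[
|W|^{8}\ll p^{1+o(1)}L^{8}\bigl(L^{7/2}p^{-1/2}+L^{2}\bigr)^{2}\ll L^{12}p^{1+o(1)},
\]
and hence $|W_p(a;L)|\ll L^{3/2}p^{1/8+o(1)}$. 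So the missing idea is an arithmetic input about reciprocals (counting solutions to $\overline{m_1}+\overline{m_2}\equiv\overline{m_3}+\overline{m_4}\pmod p$), not a second completion.
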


\begin{proof} As in~\cite[Lemma~2.4]{Gar1}, we consider a more general sum
$$
W = \sum_{k=1}^K\left|\sum_{n=1}^{N_k} \gamma_n \ep\(a  \overline  k  \overline n\)\right|, 
$$
where $\gamma_n$ are some complex numbers with $\gamma_n = p^{o(1)}$,  $n =1, \ldots, N$
and $N_k$ are some positive integers with $N_k\le N$, $k=1, \ldots K$.
Following the proof of~\cite[Lemma~2.4]{Gar1}, and using~\cite[Lemma~2.3]{Gar1} 
in full generality, we arrive to the inequality 
$$
W^8 \ll p^{1+o(1)} (KN)^4 \(K^{7/2} p^{-1/2}+K^{2}\)\(N^{7/2} p^{-1/2}+N^{2}\). 
$$
We note that for $K,N\ge p^{1/3}$ we obtain the bound $W \ll (KN)^{15/16} p^{o(1)}$ 
in~\cite[Lemma~2.4]{Gar1}, while for $ K,N \le p^{1/3}$ we arrive to 
$$
W \ll (KN)^{3/4} p^{1/8+ o(1)}.
$$
The result now follows.
\end{proof}

For almost all moduli, improving some previous results from~\cite{FoSh}, 
Irving~\cite{Irv} has shown that on average over $p$ one can improve  Lemma~\ref{lem:BilinSums}.
We present the result of~\cite[Lemma~3.1]{Irv}  
in a very special case with the averaging only over prime numbers with both variables in the same range
$[L,2L]$. 

\begin{lemma}
\label{lem:BilinSums-AA}
As $Q\to \infty$, for any fixed integer $k\ge 1$, for $1 \le L \le Q$ we have
$$
\sum_{p\in [Q,2Q]} \max_{\gcd(a,p)=1} \left| W_p(a;L) \right| 
\le Q^{1+o(1)}\(  L^{(3k-1)/(2k)} Q^{1/(2k)}+  L^{(4k-1)/(2k)}\).
$$
\end{lemma}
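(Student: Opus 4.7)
The plan is to apply a $2k$-th moment method over the primes $p\in[Q,2Q]$ and the $\ell$-variables, in the spirit of~\cite[Lemma~3.1]{Irv} specialised to the symmetric case $\ell_1,\ell_2\in\cL$ with $\cL$ the set of primes in $[L,2L]$.

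First, for each $p$ fix $a_p\in[1,p-1]$ attaining the maximum of $|W_p(a;L)|$, and apply H\"older's inequality with exponent $2k$:
\begin{equation*}
\left(\sum_{p\in[Q,2Q]}|W_p(a_p;L)|\right)^{2k}\le\pi(2Q)^{2k-1}\sum_{p\in[Q,2Q]}|W_p(a_p;L)|^{2k},
\end{equation*}
reducing the task to the moment bound $\sum_p|W_p(a_p;L)|^{2k}\le Q^{2+o(1)}L^{3k-1}+Q^{1+o(1)}L^{4k-1}$. By positivity, $|W_p(a_p;L)|^{2k}\le\sum_{a=1}^{p-1}|W_p(a;L)|^{2k}$; expanding the $2k$-th power as a sum over tuples $\vec{\ell}=(\ell_1^{(i)},\ell_2^{(i)})_{i=1}^{2k}\in\cL^{4k}$ and applying the orthogonality relation~\eqref{eq:orth} in $a$ evaluates the inner sum as $p\,T_0(p)-|\cL|^{4k}$, where $T_0(p)$ counts tuples satisfying $\sum_{i=1}^{k}\overline{\ell_1^{(i)}\ell_2^{(i)}}\equiv\sum_{i=k+1}^{2k}\overline{\ell_1^{(i)}\ell_2^{(i)}}\pmod p$.

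Second, I would clear the denominator $D(\vec{\ell})=\prod_{i=1}^{2k}\ell_1^{(i)}\ell_2^{(i)}$, so that the congruence defining $T_0(p)$ becomes $p\mid N(\vec{\ell})$ for an integer $N(\vec{\ell})\in\Z$ with $|N(\vec{\ell})|\le L^{4k-2+o(1)}$. Split tuples into \emph{diagonal} ones (those with $N(\vec{\ell})=0$, equivalently satisfying the rational identity $\sum_{i\le k}1/(\ell_1^{(i)}\ell_2^{(i)})=\sum_{i>k}1/(\ell_1^{(i)}\ell_2^{(i)})$) and the complement. For a diagonal tuple, primality of each $\ell_j^{(i)}\in[L,2L]$ combined with $p$-adic valuations at each prime in $\cL$ appearing in the identity forces the multiset equality $\{\ell_1^{(i)}\ell_2^{(i)}\}_{i\le k}=\{\ell_1^{(i)}\ell_2^{(i)}\}_{i>k}$; the divisor bound~\eqref{eq:tau} then produces at most $L^{2k+o(1)}$ diagonal tuples. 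Each off-diagonal tuple has $N(\vec{\ell})\ne 0$ with at most $\omega(|N(\vec{\ell})|)\le L^{o(1)}$ prime divisors in $[Q,2Q]$. Assembling these two contributions into the moment identity and extracting $2k$-th roots via H\"older yields the claim.

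The main obstacle will be extracting the sharp exponent $L^{4k-1}$ in the off-diagonal contribution rather than the naive $L^{4k}$: a direct estimate of the off-diagonal part of $\sum_p p\,T_0(p)$ gives only $Q\cdot|\cL|^{4k}\cdot L^{o(1)}\le QL^{4k+o(1)}$, which after $2k$-th root falls short of the required bound by a factor $L^{1/(2k)}$. Saving this requires a careful interplay between the subtraction $-(\pi(2Q)-\pi(Q))|\cL|^{4k}$ in the moment identity, the precise size bound $|N(\vec{\ell})|\le L^{4k-2}$, and the averaging over $p\in[Q,2Q]$ of the number of prime divisors of $N(\vec{\ell})$ in this range, in the manner of~\cite[Lemma~3.1]{Irv}; the primality of the $\ell_j^{(i)}$ is essential here rather than merely cosmetic, since it is what restricts the rational cancellations in the diagonal and simultaneously bounds the multiplicity of $p$-divisibility off-diagonal.
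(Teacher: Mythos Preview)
The paper itself gives no proof of this lemma; it is recorded as a direct specialisation of \cite[Lemma~3.1]{Irv}. So the comparison is between your sketch and Irving's argument.

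Your proposal contains a genuine gap, one that you partly anticipate but do not resolve. After applying H\"older over $p$ alone and then replacing $|W_p(a_p;L)|^{2k}$ by $\sum_{a\bmod p}|W_p(a;L)|^{2k}$, you arrive at the $4k$-variable count $T_0(p)$. The off-diagonal contribution to $\sum_p pT_0(p)$ is then at most $2Q\sum_{\vec\ell\ \mathrm{off}}\#\{p\in[Q,2Q]:p\mid N(\vec\ell)\}$, and since each $N(\vec\ell)$ has $O_k(1)$ prime divisors in $[Q,2Q]$, this is $O_k(Q\,|\cL|^{4k})=QL^{4k+o(1)}$. After multiplying by the H\"older factor $\pi(2Q)^{2k-1}$ and taking $2k$-th roots, the second term becomes $QL^{2+o(1)}$, which is the trivial bound on $\sum_p|W_p(a_p;L)|$; you recover nothing. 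The ``careful interplay'' you invoke cannot repair this: the subtracted term $\pi([Q,2Q])|\cL|^{4k}$ is of order $QL^{4k}/\log Q$, strictly smaller than the off-diagonal $O_k(QL^{4k})$, and the size bound $|N(\vec\ell)|\le L^{4k-2}$ has already been used to get the $O_k(1)$ divisor count. The loss of $L^{1/(2k)}$ is structural, not a matter of bookkeeping.

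The correct route (Irving's) is to pull out one inner variable \emph{before} H\"older. Write $|W_p(a_p;L)|\le\sum_{\ell_1\in\cL}\bigl|\sum_{\ell_2\in\cL}\ep(a_p\overline{\ell_1\ell_2})\bigr|$ and apply H\"older with exponent $2k$ over the pair $(p,\ell_1)$. For fixed $p$ the values $b=a_p\overline{\ell_1}$ are distinct in $(\Z/p\Z)^*$, so by positivity one may extend the $\ell_1$-sum to all $b\pmod p$ and use orthogonality to reach $p\,J_k(p)$, where $J_k(p)$ counts solutions of $\sum_{i\le k}\overline{\ell_2^{(i)}}\equiv\sum_{i>k}\overline{\ell_2^{(i)}}\pmod p$ with only $2k$ variables $\ell_2^{(i)}\in\cL$. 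For this simpler congruence the $q$-adic argument genuinely forces multiset equality on the diagonal (each $\ell_2^{(i)}$ is prime, so valuations are $0$ or $-1$), giving at most $k!\,|\cL|^k$ diagonal tuples; and the numerator $N'(\vec\ell_2)$ has $|N'|\ll L^{2k-1}$, so the off-diagonal sum over $p$ costs $QL^{2k+o(1)}$. Combining with the H\"older factor $(|\cL|\,\pi([Q,2Q]))^{(2k-1)/(2k)}$ produces exactly $Q^{1+1/(2k)}L^{(3k-1)/(2k)}+QL^{(4k-1)/(2k)}$. The point is that the extra factor $|\cL|^{(2k-1)/(2k)}$ in the H\"older weight is precisely what buys the saving of $L^{1/(2k)}$ that your approach lacks.
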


Hence, from Lemma~\ref{lem:BilinSums-AA}, we have:

\begin{cor}
\label{cor:BilinSums-AA} 
As $Q\to \infty$, for any fixed integer $k\ge 1$, for $1 \le L \le Q$, 
for all  but $o(Q/\log Q)$  primes $p\in [Q,2Q]$,  we have  
$$
\max_{\gcd(a,p)=1} \left| W_p(a;L) \right| \le 
\( L^{(3k-1)/(2k)}  Q^{1/(2k)}+  L^{(4k-1)/(2k)}\)  Q^{o(1)}. 
$$
\end{cor}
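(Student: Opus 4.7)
The plan is to deduce Corollary~\ref{cor:BilinSums-AA} from Lemma~\ref{lem:BilinSums-AA} by a standard Markov-type averaging argument, since the corollary is essentially a ``for most $p$'' translation of the ``sum over $p$'' statement of the lemma.

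Set
$$
T(L,Q,k) = L^{(3k-1)/(2k)} Q^{1/(2k)} + L^{(4k-1)/(2k)},
$$
so that Lemma~\ref{lem:BilinSums-AA} reads
$$
\sum_{p\in [Q,2Q]} \max_{\gcd(a,p)=1} |W_p(a;L)| \le Q^{1+o(1)} T(L,Q,k).
$$
Let $\eta(Q)$ be any function tending to $0$ sufficiently slowly that $Q^{\eta(Q)}$ still dominates the $Q^{o(1)}$ factor appearing on the right-hand side above (for instance, one may take $\eta(Q) = 1/\log\log Q$). Define the exceptional set
$$
\cE = \bigl\{ p \in [Q,2Q] : p \text{ prime},\ \max_{\gcd(a,p)=1}|W_p(a;L)| > T(L,Q,k)\, Q^{\eta(Q)} \bigr\}.
$$
Trivially bounding the sum from below by the contribution of $\cE$, we obtain
$$
|\cE|\, T(L,Q,k)\, Q^{\eta(Q)} \le Q^{1+o(1)} T(L,Q,k),
$$
so $|\cE| \le Q^{1 - \eta(Q) + o(1)} = o(Q/\log Q)$ as $Q\to\infty$, since $\eta(Q)\log Q \to \infty$.

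For every prime $p \in [Q,2Q]\setminus \cE$ the claimed bound
$$
\max_{\gcd(a,p)=1} |W_p(a;L)| \le T(L,Q,k)\, Q^{o(1)}
$$
holds with $Q^{o(1)} = Q^{\eta(Q)}$, which finishes the argument. There is no real obstacle here; the only minor subtlety is choosing the auxiliary function $\eta(Q)$ slowly enough that $Q^{\eta(Q)}$ still qualifies as $Q^{o(1)}$, yet fast enough for $Q^{1-\eta(Q)}$ to be $o(Q/\log Q)$, which any $\eta(Q)$ with $\eta(Q) \to 0$ and $\eta(Q)\log Q \to \infty$ (e.g.\ $\eta(Q) = 1/\log\log Q$) achieves.
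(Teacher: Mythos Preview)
Your proof is correct and is exactly the standard Markov-type averaging that the paper has in mind; the paper itself gives no details beyond ``Hence, from Lemma~\ref{lem:BilinSums-AA}, we have'', so your argument is precisely what fills that gap. The only cosmetic point is that the specific choice $\eta(Q)=1/\log\log Q$ need not dominate the unspecified $Q^{o(1)}$ in Lemma~\ref{lem:BilinSums-AA}, but your general formulation (any $\eta$ with $\eta(Q)\to 0$, $\eta(Q)\log Q\to\infty$, and $\eta(Q)$ eventually above the lemma's $o(1)$, e.g.\ $\eta(Q)=\max\{\text{that }o(1),\,1/\log\log Q\}$) already covers this.
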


\subsection{Congruences with reciprocals of squares}
\label{sec:cong recip sq}

 Given an integer $r \ge 1$, a real  $U \ge 1$, and $\lambda \in \F_p$,  let $I_{r,p}(U;\lambda )$  be the number of solutions to
 the congruence
 \begin{align*}
\frac{1}{u_1^2}+ \ldots+  \frac{1}{u_r^2}  &\equiv  \frac{1}{u_{r+1}^2}+ \ldots+  \frac{1}{u_{2r}^2}+ \lambda \pmod p,\\
U \le u_1, & \ldots, u_{2r}  \le 2U, \qquad i =1,\ldots, 2r.
\end{align*}

First we observe that the standard expression of  $I_{r,p}(U;\lambda )$  via additive characters immediately implies the well-known inequality
$$
I_{r,p}(U;\lambda) \le I_{r,p}(U;  0).
$$
Hence we denote 
$$
I_{r,p}(U) =  I_{r,p}(U;  0)
$$
and concentrate on this quantity.

Heath-Brown~\cite[Lemma~1]{H-B} has given  a nontrivial  bound on $I_{r,p}(U)$, 
see also~\cite[Proposition~1]{BouGar}, however these results seems to be not strong enough 
for our  purpose. However on average over $p$ a much stronger bound is given   
by ~\cite[Lemma~3.4]{LSZ1}
(We recall that all implied constants are allowed to depend on $r$): 

\begin{lemma}
\label{lem:Irp Aver}   For any  fixed positive integer $r$ and sufficiently large real $1 \le U \le Q$,
we have
$$
\frac{1}{Q}\sum_{Q \le p \le 2Q} I_{r,p}(U) \le \(U^{2r} Q^{-1} + U^r\)Q^{o(1)}.
$$
\end{lemma}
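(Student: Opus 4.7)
My plan is to exchange the order of summation, clear denominators, and separate the integer identities from the genuine modular congruences. After restricting to the case $\gcd(u_i,p)=1$ for all $i$ (the exceptional vectors, where some $u_i \equiv 0 \pmod p$, contribute negligibly to $\frac{1}{Q}\sum_p I_{r,p}(U)$ since such vectors force $p \in [U,2U]$ and number $O(U^{2r-1})$ per such $p$), multiplying the congruence through by $\prod_{i=1}^{2r} u_i^2$ produces an integer
$$
A(\mathbf u) = \prod_{j=r+1}^{2r} u_j^2 \sum_{i=1}^r \prod_{\substack{k=1\\k\ne i}}^{r} u_k^2 \;-\; \prod_{j=1}^{r} u_j^2 \sum_{i=1}^r \prod_{\substack{k=1\\k\ne i}}^{r} u_{r+k}^2
$$
with $|A(\mathbf u)| \ll_r U^{4r-2}$, and the congruence is equivalent to $A(\mathbf u)\equiv 0 \pmod p$. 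Exchanging sums gives
$$
\sum_{Q\le p\le 2Q} I_{r,p}(U) \;=\; \sum_{\mathbf u\in [U,2U]^{2r}} \#\{p\in [Q,2Q]:\, p\mid A(\mathbf u)\}
$$
up to the negligible contributions above.

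Split the outer sum based on whether $A(\mathbf u)=0$. For $A(\mathbf u)\neq 0$, the inner count is at most the number of prime divisors of $|A(\mathbf u)|$ in the interval $[Q,2Q]$, which is $\le \log|A(\mathbf u)|/\log Q \ll_r \log U/\log Q = Q^{o(1)}$, so this case contributes at most $U^{2r}Q^{o(1)}$. For $A(\mathbf u)=0$, i.e.\ for vectors satisfying the integer identity $\sum_{1}^{r}1/u_i^2 = \sum_{r+1}^{2r}1/u_i^2$, every prime in $[Q,2Q]$ qualifies, contributing $\pi(2Q)-\pi(Q) \ll Q/\log Q$ per vector. Writing $N_0$ for the number of such $\mathbf u$, this case contributes $O(N_0\cdot Q/\log Q)$.

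The key technical step is to establish $N_0 \ll U^{r+o(1)}$. Since permutations $(u_1,\ldots,u_r)$ of $(u_{r+1},\ldots,u_{2r})$ alone account for $r!\,U^r$ solutions, this order is sharp. Write $N_0 = \sum_c M_r(c)^2$, where $M_r(c) = \#\{(u_1,\ldots,u_r)\in [U,2U]^r:\ \sum_i 1/u_i^2 = c\}$, and note that $\sum_c M_r(c) = U^r$; it therefore suffices to prove the uniform estimate $\max_c M_r(c) \le U^{o(1)}$. This follows from a divisor-function analysis: if $c=a/b$ is in lowest terms then $b\mid (u_1\cdots u_r)^2 \le (2U)^{2r}$, so $\tau(b)\le U^{o(1)}$, and the rigid multiplicative structure of the diagonal equation confines $(u_1,\ldots,u_r)$ to at most $\tau(b)^{O(r)}=U^{o(1)}$ arithmetic classes.

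Combining the two contributions and dividing by $Q$ yields $(U^{2r}Q^{-1}+U^r)Q^{o(1)}$ as claimed. The main obstacle is the uniform bound $M_r(c)\le U^{o(1)}$: for $r=2$ this follows by a direct parametrization of $1/u_1^2+1/u_2^2=c$ via $\gcd(u_1,u_2)$ and the divisors of the reduced denominator, but for $r\ge 3$ one proceeds by induction on $r$, treating separately the semi-diagonal contributions (where some $u_i$ coincide, reducing to the smaller case) and the generic locus, where the divisor bound together with the inductive hypothesis controls the tuple up to an $U^{o(1)}$ loss.
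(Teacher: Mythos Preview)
The paper does not supply its own argument here: Lemma~\ref{lem:Irp Aver} is quoted verbatim from~\cite[Lemma~3.4]{LSZ1}.  Your overall strategy --- clear denominators to produce the integer $A(\mathbf u)$, swap the order of summation, split according to whether $A(\mathbf u)=0$, and use the divisor bound on the number of large prime factors of a nonzero $A(\mathbf u)$ --- is the standard route and is almost certainly what is carried out in~\cite{LSZ1}.  The reduction to the purely arithmetic count $N_0\ll U^{r+o(1)}$ is correct, and your treatment of the case $r=2$ (where $(au_1^2-b)\mid b^2$ pins $u_1$ to at most $\tau(b^2)=U^{o(1)}$ values) is clean and complete; since the paper only ever invokes this lemma with $r=2$ (inside Lemma~\ref{lem:TIaa}), your write-up already suffices for everything the paper needs.

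There is, however, a genuine gap in your handling of $r\ge 3$.  The inductive sketch you give (``semi-diagonal contributions \dots\ generic locus \dots\ divisor bound together with the inductive hypothesis'') does not close as stated.  Fixing one variable $u_r$ and applying the inductive hypothesis to $M_{r-1}(c-1/u_r^2)$ yields only $M_r(c)\le (U+1)\max_{c'}M_{r-1}(c')\le U^{1+o(1)}$, and iterating gives $M_r(c)\le U^{r-2+o(1)}$, hence merely $N_0\le U^{2r-2+o(1)}$, which is too weak once $r\ge 3$.  The uniform bound $M_r(c)\ll_\varepsilon U^\varepsilon$ \emph{is} true --- for unit fractions it is a classical lemma of Karatsuba, and the $1/u^2$ case follows by the substitution $w_i=u_i^2$ --- but its proof requires a more delicate recursion than the one you indicate, combining the interval constraint on the smallest variable with a step-by-step control of the reduced denominator.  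If you want the lemma in the stated generality, that argument needs to be written out (or cited); the phrases ``rigid multiplicative structure'' and ``arithmetic classes'' do not by themselves supply it.
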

%\commM{$I_{r,p}(U)$ is not used right now further in the text, right?}

Given two positive real numbers $U$ and $V$, we denote by 
$T_{a,p}(U,V)$ the number of solutions to the congruence
$$
u^2v \equiv a \pmod p, \qquad  1 \le u \le U, \ 1 \le v \le V.
$$

%We now recall an upper bound  on $T_{a,p}(U,V)$  due to  Nunes~\cite[Equation~(3.13)]{Nun}. 
%\todoI{Nunes}

%\begin{lemma}
%\label{lem:TI} For any $a$ with $\gcd(a,p) = 1$ and reals $U$ and $V$
%with  $1\le U,V \le p^{3/4}$, we have 
%$$
%T_{a,p}(U,V) \le \min\{U^{2/3} V^{1/4}, U^{1/4} V^{2/3}\} p^{o(1)}.
%$$
%\end{lemma}

\begin{lemma}
\label{lem:TIaa}
As $Q\to \infty$, for all  but $o(Q/\log Q)$  primes $p\in [Q,2Q]$,  for any integer $a$ with 
$\gcd(a,p)=1$ and reals $U$ and $V$ with  $1\le U,V \le Q$, we have 
$$
T_{a,p}(U,V) \ll V^{1/4}(Up^{-1/4}+U^{1/2})Q^{o(1)}.
$$
\end{lemma}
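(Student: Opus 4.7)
The plan is to bound the fourth moment $T_{a,p}(U,V)^{4}$ by an expression of the form $V\cdot I_{2,p}(U)$ via an integer-representative trick, and then apply Lemma~\ref{lem:Irp Aver} together with Markov's inequality to deduce the pointwise conclusion for almost all primes.

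Because $V\le Q\le p$, for each $u\in[1,U]$ coprime to $p$ the congruence $u^{2}v\equiv a\pmod p$ uniquely determines $v$ modulo $p$; denote by $b(u)\in[1,p-1]$ the representative of $a\overline{u^{2}}\pmod p$. Then $T_{a,p}(U,V)=\#\{u\in[1,U]:b(u)\le V\}$, whence
\[
T_{a,p}(U,V)^{4}=\#\bigl\{(u_1,\ldots,u_4)\in[1,U]^{4}:b(u_i)\in[1,V]\ \forall\,i\bigr\}.
\]
The key observation is that for any such $4$-tuple the \emph{integer} $D:=b(u_1)+b(u_2)-b(u_3)-b(u_4)$ lies in $[-(2V-2),\,2V-2]$, while reducing modulo $p$ it equals $a(\overline{u_1^{2}}+\overline{u_2^{2}}-\overline{u_3^{2}}-\overline{u_4^{2}})$. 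Summing over the at most $4V-3$ possible integer values of $D$ and invoking the positivity inequality $I_{2,p}(U;\lambda)\le I_{2,p}(U;0)=I_{2,p}(U)$ already quoted in Section~\ref{sec:cong recip sq} yields
\[
T_{a,p}(U,V)^{4}\le 4V\,\widetilde{I}_{2,p}(U),
\]
where $\widetilde{I}_{2,p}(U)$ denotes the analogue of $I_{2,p}(U)$ with variables in $[1,U]$ instead of $[U,2U]$.

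It remains to relate $\widetilde{I}_{2,p}(U)$ to Lemma~\ref{lem:Irp Aver}. A dyadic decomposition of $[1,U]$ combined with H\"older's inequality applied to the additive-character expression for $\widetilde{I}_{2,p}(U)$ gives $\widetilde{I}_{2,p}(U)\ll (\log U)^{3}\sum_{M\text{ dyadic},\,M\le U}I_{2,p}(M)$. For each dyadic $M\le Q$, Markov's inequality applied to Lemma~\ref{lem:Irp Aver} at threshold $(M^{4}/Q+M^{2})Q^{\varepsilon}$ excludes at most $Q^{1-\varepsilon+o(1)}$ primes, and a union bound over the $O(\log Q)$ dyadic values of $M$ preserves the total exceptional set as $o(Q/\log Q)$. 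For each remaining prime one has $I_{2,p}(M)\ll(M^{4}/Q+M^{2})Q^{o(1)}$ uniformly in dyadic $M\le U$; summing geometrically yields $\widetilde{I}_{2,p}(U)\ll(U^{4}/Q+U^{2})Q^{o(1)}$, so that taking fourth roots, together with $p\asymp Q$, produces the claimed estimate. The decisive conceptual step is the integer-representative identity, which is what manufactures the $V^{1/4}$ factor by confining the residue class $a(\overline{u_1^{2}}+\overline{u_2^{2}}-\overline{u_3^{2}}-\overline{u_4^{2}})\pmod p$ to only $O(V)$ admissible values; the main technical nuisance is the dyadic Markov bookkeeping required to keep the exceptional set $o(Q/\log Q)$ uniformly in $U$, but this is entirely routine once set up.
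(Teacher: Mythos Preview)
Your proof is correct and follows essentially the same approach as the paper: both arguments reduce $T_{a,p}(U,V)^{4}$ to $O(V)\cdot I_{2,p}(U)$ (the paper via a bound on $T^{2}$ followed by Cauchy--Schwarz, you directly via the integer-representative trick, but these are equivalent manoeuvres) and then invoke Lemma~\ref{lem:Irp Aver}. Your explicit handling of the $[1,U]$ versus $[U,2U]$ discrepancy and the dyadic/Markov bookkeeping needed to make the exceptional set uniform in $U$ is in fact more careful than the paper's own treatment, which glosses over both points.
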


\begin{proof}
In order to lighten the notation, $T$ will denote the number of solutions $T_{a,p}(U,V)$. Expanding, we get 

$$T^2 \le \#\left\{1\le u_1, u_2 \le U, 1\le b\le 2V \text{ such that } \left(\frac{1}{u_1^2}+ \frac{1}{u_2^2}\right) \equiv va^{-1} \bmod p\right\}.$$ By Cauchy-Schwarz inequality, we deduce 

\begin{align*}T^4 & \ll V \#\left\{1\le u_1,u_2,u_3,u_4 \le U \text{ such that } \frac{1}{u_1^2}+ \frac{1}{u_2^2}  \equiv  \frac{1}{u_{3}^2}+ \frac{1}{u_{4}^2} \bmod p\right\} \\ & \ll V I_{2,p}(U).  \end{align*}

Using Lemma \ref{lem:Irp Aver} with $r=2$, we deduce that for almost all primes $p\in [Q,2Q]$, we have $$I_{2,p}(U) \ll p^{o(1)} (U^{4}p^{-1} + U^2).$$ Thus, for almost all primes $p$, we get 

$$T \ll V^{1/4}(Up^{-1/4}+U^{1/2})p^{o(1)}.$$
\end{proof}

% U^{1/2} V^{1/4} Q^{o(1)}

\subsection{Congruences with products of primes}
\label{sec:congprod}

Given two positive real numbers $L$ and $h$, we denote by 
$N_{a,p}(L,h)$ the number of solutions to the congruence
\begin{equation}
\label{eq:cong llu}
\ell_1 \ell_2 u \equiv a \pmod p, \qquad \ell_1,\ell_2 \in \cL, \ 1 \le u \le h,
\end{equation}
where $\cL$ is the set of primes $\ell \in [L,2L]$ with $\gcd(\ell,p) =1$. 
First we note that using  standard   techniques, we easily derive  the following asymptotic formula

\begin{lemma}
\label{lem:congr-asymp} For any integer $a$ and prime $p$ with $\gcd(a,p)=1$ and  real $h$ and  $L$ 
with $1 \le L \le p^{1/3}$ and $1\le h\le p$, 
we have 
$$
N_{a,p}(L,h) = \frac{K^2 h}{p} + O\(L^{3/2} p^{1/8+o(1)}\), 
$$
where $K = \# \cL$ is the cardinality of $\cL$.
\end{lemma}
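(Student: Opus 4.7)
The plan is to detect the congruence \eqref{eq:cong llu} by additive characters and then invoke the bilinear bound of Lemma~\ref{lem:BilinSums} on the resulting double sum over pairs of primes.

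Since $\gcd(a,p)=1$, every solution to \eqref{eq:cong llu} automatically satisfies $\gcd(u\ell_1\ell_2,p)=1$, so the congruence is equivalent to $u \equiv a\,\overline{\ell_1}\,\overline{\ell_2} \pmod p$. Orthogonality of additive characters modulo $p$ then gives
$$
N_{a,p}(L,h) \;=\; \frac{1}{p}\sum_{t=0}^{p-1}\biggl(\sum_{u=1}^{h} \ep(tu)\biggr)\biggl(\sum_{\ell_1,\ell_2\in\cL} \ep\bigl(-ta\,\overline{\ell_1}\,\overline{\ell_2}\bigr)\biggr).
$$
The term $t=0$ contributes exactly $K^2 h/p$, the asserted main term.

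For $t\ne 0$, the inner double sum over primes is $W_p(-ta;L)$, and since $\gcd(ta,p)=1$ the hypothesis $L\le p^{1/3}$ lets me apply Lemma~\ref{lem:BilinSums} uniformly in $t$, obtaining
$$
\max_{1\le t\le p-1}\bigl|W_p(-ta;L)\bigr| \;\le\; L^{3/2} p^{1/8+o(1)}.
$$
The classical $L^1$-estimate for the incomplete geometric sum,
$$
\sum_{t=1}^{p-1} \biggl|\sum_{u=1}^{h}\ep(tu)\biggr| \;\ll\; \sum_{t=1}^{p-1}\min\bigl(h,\,1/\|t/p\|\bigr) \;\ll\; p\log p,
$$
then bounds the total contribution of $t\ne 0$ by $p^{-1}\cdot L^{3/2} p^{1/8+o(1)}\cdot p\log p \;=\; L^{3/2}p^{1/8+o(1)}$, which matches the error term in the statement.

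There is no real obstacle: the only nontrivial input is Lemma~\ref{lem:BilinSums}, and the rest is a routine completion-of-sums argument. The hypothesis $L\le p^{1/3}$ is inherited directly from that lemma, while $h\le p$ is used only to ensure that the main term is meaningful; the proof itself never requires $h$ to be small compared with $p$.
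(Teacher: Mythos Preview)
Your proof is correct and essentially the same as the paper's: the paper phrases the reduction as an equidistribution statement and appeals to the Erd\H{o}s--Tur\'an inequality together with Lemma~\ref{lem:BilinSums}, whereas you unpack this directly via orthogonality of additive characters and the $L^1$-bound on the linear sum. Both routes feed the same bilinear estimate into the same frequency sum, so there is no substantive difference.
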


\begin{proof}
We interpret  the  congruence~\eqref{eq:cong llu} 
as the uniformity of distribution question about the number of residues 
$a \ell_1^{-1} \ell_2^{-1}  \pmod p$ (where the inversions are modulo $p$), which 
fall in the interval $[1,h]$.

The result follows from Lemma~\ref{lem:BilinSums} applied to the sets $\cU=\cV$ which are
the sets of reciprocals modulo $p$ of $\ell \in \cL$, combined with the    the Erd{\H o}s--Tur{\'a}n
inequality, see~\cite{DrTi,KuNi}.
\end{proof} 

Similarly, from Corollary~\ref{cor:BilinSums-AA}, we derive

\begin{lemma}
\label{lem:congr-asymp-AA} 
As $Q\to \infty$, for any fixed integer $k\ge 1$, for $1 \le L \le Q$ 
for all  but $o(Q/\log Q)$  primes $p\in [Q,2Q]$,  for any integer $a$ with $\gcd(a,p)=1$ and  real $h$ 
with  $1\le h\le p$, we have 
$$
N_{a,p}(L,h) = \frac{K^2 h}{p} + O\(\( L^{(3k-1)/(2k)} p^{1/(2k)}+  L^{(4k-1)/(2k)}\) p^{o(1)}\), 
$$
where $K = \# \cL$ is the cardinality of $\cL$.
\end{lemma}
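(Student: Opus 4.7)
My plan is to imitate the proof of Lemma~\ref{lem:congr-asymp} almost verbatim, substituting the average bound of Corollary~\ref{cor:BilinSums-AA} for the universal bound of Lemma~\ref{lem:BilinSums}. Precisely, I view the congruence~\eqref{eq:cong llu} as counting how many of the $K^2$ residues $a\,\overline{\ell_1}\,\overline{\ell_2} \bmod p$, with $\ell_1,\ell_2 \in \cL$, fall into the interval $[1,h]$. The Erd\H{o}s--Tur\'an inequality (see~\cite{DrTi,KuNi}) then gives, for any positive integer $J$,
\[
\left|N_{a,p}(L,h) - \frac{K^2 h}{p}\right| \ll \frac{K^2}{J} + \sum_{j=1}^{J} \frac{|W_p(ja;L)|}{j}.
\]

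The next step is to invoke Corollary~\ref{cor:BilinSums-AA}. Because that corollary states its bound on $|W_p(b;L)|$ uniformly over all $b$ coprime to $p$, a single exceptional set of size $o(Q/\log Q)$ serves simultaneously for every summand $W_p(ja;L)$ with $1\le j\le J$, provided $J<p$ so that $\gcd(ja,p)=1$. For each $p$ outside this exceptional set, every such term is therefore bounded by $\bigl(L^{(3k-1)/(2k)} Q^{1/(2k)} + L^{(4k-1)/(2k)}\bigr) Q^{o(1)}$, while the harmonic sum $\sum_{j\le J} 1/j$ contributes at most a factor $\log J = Q^{o(1)}$.

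To close the argument, I choose $J$ as a suitable small positive power of $Q$, large enough that $K^2/J$ is absorbed by the Kloosterman error from Corollary~\ref{cor:BilinSums-AA}, and small enough to remain below $p$. Collecting estimates yields the claimed asymptotic formula.

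The only point deserving care, rather than a genuine obstacle, is the bookkeeping of the exceptional set: one must verify that the same set of $o(Q/\log Q)$ bad primes can be used for all frequencies $j$ appearing in the Erd\H{o}s--Tur\'an bound. This is free of charge because Corollary~\ref{cor:BilinSums-AA} already places $\max_{\gcd(b,p)=1}$ inside the bound, so no additional union bound over $b=ja$ is required.
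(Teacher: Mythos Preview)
Your proposal is correct and matches the paper's own argument exactly: the paper proves Lemma~\ref{lem:congr-asymp-AA} by the one-line remark ``Similarly, from Corollary~\ref{cor:BilinSums-AA}, we derive'', meaning precisely the Erd\H{o}s--Tur\'an reduction of Lemma~\ref{lem:congr-asymp} with Corollary~\ref{cor:BilinSums-AA} in place of Lemma~\ref{lem:BilinSums}. Your observation that the $\max_{\gcd(b,p)=1}$ inside Corollary~\ref{cor:BilinSums-AA} makes a single exceptional set serve for all frequencies $j\le J$ is exactly the point that makes this substitution legitimate.
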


We also need the following upper bound on $N_{a,p}(L,h) $ which is better for small
values of $h$ when Lemma~\ref{lem:congr-asymp}  fails to produce any nontrivial result.

\begin{lemma}
\label{lem:congr-bound} For any integer $a$ and prime $p$ with $\gcd(a,p)=1$ and reals $1 \le L, h \le p$ 
we have 
$$
N_{a,p}(L,h) \le \(L^2 h/p +1\)p^{o(1)}.
$$
\end{lemma}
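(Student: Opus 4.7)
The plan is to convert the congruence $\ell_1\ell_2 u \equiv a \pmod p$ into an equality by lifting to the integers, and then count in two steps. Given a solution $(\ell_1,\ell_2,u)$ with $\ell_1,\ell_2 \in [L, 2L]$ and $1\le u\le h$, I form the positive integer $n = \ell_1\ell_2 u$, which lies in the interval $[L^2,\, 4L^2 h]$ and satisfies $n \equiv a \pmod p$. In this way I trade the congruence modulo $p$ for an honest identity in $\Z$, at the cost of an extra factoring count.

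The first step is to bound the number of admissible values of $n$. These are integers in an interval of length at most $4L^2 h$ that lie in a fixed residue class modulo $p$, so their number is at most $\lfloor 4L^2 h/p\rfloor + 1 \ll L^2 h/p + 1$.

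The second step is to bound, for each such $n$, the number of triples $(\ell_1,\ell_2,u)$ with $n = \ell_1\ell_2 u$. Every such triple is an ordered factorisation of $n$ as a product of three positive integers, so this number is at most the triple divisor function $\tau_3(n)$. Dropping the primality constraint and using $\tau_3(n) \le \tau(n)^2$ together with the standard bound \eqref{eq:tau}, and noting that $n \le 4L^2 h \le 4 p^3 = p^{O(1)}$, I obtain $\tau_3(n) \le n^{o(1)} = p^{o(1)}$.

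Multiplying the two estimates and summing over admissible $n$ yields $N_{a,p}(L,h) \le (L^2 h/p + 1)\, p^{o(1)}$, as required. There is no serious obstacle here: the argument relies only on size considerations and the divisor bound. What makes the estimate nontrivial in the regime $h < p/L^2$ is precisely that then at most one integer $n \in [L^2, 4L^2 h]$ can satisfy $n \equiv a \pmod p$, so the $+1$ term in the bound absorbs all contributions and dominates.
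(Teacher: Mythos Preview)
Your proof is correct and follows essentially the same route as the paper: lift the congruence to an equality $\ell_1\ell_2 u = a + kp$ (equivalently, parametrise by the product $n$), count the $O(L^2h/p+1)$ admissible lifts, and then bound the number of factorisations of each lift by $p^{o(1)}$. The only cosmetic difference is that the paper exploits the primality of $\ell_1,\ell_2$ (each must be a prime divisor of $a+kp$, of which there are $O(\log p)$), whereas you use the general bound $\tau_3(n)=n^{o(1)}$; both yield the same $p^{o(1)}$ factor.
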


\begin{proof}  Clearly, we can assume that $1 \le a \le p$. 
Then  the  congruence~\eqref{eq:cong llu} implies that 
$\ell_1 \ell_2 u  = a  + kp$ for some non-negative $k \le 4L^2h/p$. 
Thus $k$ takes at most $4L^2 h/p +1$ possible values and 
for each of them $\ell_1$ and $\ell_2$ cant take at most $O(\log p)$ 
possible values among the divisors of $a  + kp \ge 1$.
\end{proof}

We now estimate the  average value of  $N_{a,p}(L,h)$ over a 
special set of $a$ using Lemma~\ref{lem:TIaa}.

\begin{lemma}
\label{lem:congr-bound-aver} As $Q\to \infty$,  for all  but $o(Q/\log Q)$ 
primes $p\in [Q,2Q]$,  for any integer $a$, and real  $1 \le F, L, h \le p$ 
with $F, L^2h < p$, 
for the sum
$$
R_{a,p}(F,L,h) = \sum_{F \le d \le 2F} N_{ad^{-2},p}(L,h) 
$$
we have 
$$
R_{a,p}(F,L,h) \le    \max\{F(L^2h)^{1/4}p^{-1/4}, F^{1/2}(L^2h)^{1/4}\} p^{o(1)}.
$$
\end{lemma}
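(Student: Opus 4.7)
The plan is to reduce $R_{a,p}(F,L,h)$ to the quantity $T_{a,p}$ controlled by Lemma~\ref{lem:TIaa}. Unwinding the definitions, $R_{a,p}(F,L,h)$ is the number of quadruples $(d,\ell_1,\ell_2,u)$ with $F\le d\le 2F$, $\gcd(d,p)=1$, $\ell_1,\ell_2\in\cL$, $1\le u\le h$, satisfying
$$
d^2\,\ell_1\ell_2\, u \equiv a \pmod p.
$$
Since $F<p$, at most one $d\in[F,2F]$ fails $\gcd(d,p)=1$, and that term contributes negligibly.

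The first step is to group by $v=\ell_1\ell_2 u$. Then $v\le 4L^2h<4p$, and the number of representations of any $v$ as such a product is at most $\tau(v)^2$, which by~\eqref{eq:tau} is $v^{o(1)}=p^{o(1)}$. Consequently
$$
R_{a,p}(F,L,h) \;\le\; p^{o(1)}\,\#\bigl\{(d,v):\,F\le d\le 2F,\ 1\le v\le 4L^2h,\ d^2v\equiv a\pmod p\bigr\} \;\le\; p^{o(1)}\,T_{a,p}(2F,\,4L^2h).
$$

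The second step is to invoke Lemma~\ref{lem:TIaa} with $U=2F$ and $V=4L^2h$, whose hypotheses $1\le U,V\le Q$ are guaranteed (up to harmless constants) by the assumptions $F,L^2h<p\le 2Q$. For all primes $p\in[Q,2Q]$ outside the same exceptional set of size $o(Q/\log Q)$ this gives
$$
T_{a,p}(2F,\,4L^2h) \;\ll\; (L^2h)^{1/4}\bigl(F\,p^{-1/4}+F^{1/2}\bigr)\,Q^{o(1)},
$$
which, since $p\asymp Q$, rearranges exactly to the claimed bound $\max\{F(L^2h)^{1/4}p^{-1/4},\, F^{1/2}(L^2h)^{1/4}\}\,p^{o(1)}$.

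There is no substantial obstacle; the argument is a clean reduction. The only points requiring mild care are the multiplicity bound $\tau(v)^2=p^{o(1)}$, which is uniform because $v<4p$, and the verification that the parameters $U,V$ fed into Lemma~\ref{lem:TIaa} remain within the regime controlled by that lemma, which is precisely what the side conditions $F<p$ and $L^2h<p$ provide. Any finer loss in the exceptional set is absorbed since the exceptional set of Lemma~\ref{lem:TIaa} itself is already $o(Q/\log Q)$, matching the conclusion.
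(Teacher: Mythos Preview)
Your proof is correct and follows essentially the same route as the paper: reinterpret $R_{a,p}(F,L,h)$ as a count of solutions to $d^2\ell_1\ell_2 u\equiv a\pmod p$, collapse $v=\ell_1\ell_2 u$ using the divisor bound~\eqref{eq:tau}, and feed the result into Lemma~\ref{lem:TIaa}. Your version is in fact slightly more careful than the paper's, recording the harmless factor of $4$ in the range of $v$ and the $\gcd(d,p)=1$ caveat explicitly.
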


\begin{proof}  We observe that the   sum $R_{a,p}(F,L,h)$
counts the number of solutions to the congruence
$$
\ell_1 \ell_2 u d^2 \equiv a \pmod p, \qquad F \le d \le 2F,\ \ell_1,\ell_2 \in \cL, \ 1 \le u \le h.
$$
Denoting $v = \ell_1 \ell_2 u$ we see from~\eqref{eq:tau}  that each such $v \in [1, L^2h]$ can be represented 
like this in at most $p^{o(1)}$ ways. Hence 
$$
R_{a,p}(F,L,h) \le   T_{a,p}(2F,L^2h)   p^{o(1)}.
$$
Thus by  Lemma~\ref{lem:TIaa} 
$$
R_{a,p}(F,L,h) \le    \max\{F(L^2h)^{1/4}p^{-1/4}, F^{1/2}(L^2h)^{1/4}\} p^{o(1)}.
$$
This concludes the proof.
\end{proof} 

We also need to bound the number of solutions of a modified version of~\eqref{eq:cong llu}.
Namely, we use  $Q_{a,p}(L,h)$ to denote the number of solutions to the congruence
\begin{equation}
\label{eq:cong ll2v}
\ell_1 \ell_2^2 v \equiv a \pmod p, \qquad \ell_1,\ell_2 \in \cL, \ 1 \le v \le h,
\end{equation}

\begin{lemma}
\label{lem:congr-boundsquare} For any integer $a$ and prime $p$ with $\gcd(a,p)=1$ and reals $ 1\le L, h \le  p$ 
with $2L h \le p$
we have 
$$
Q_{a,p}(L,h) \le \(L h/p +1\)Lp^{o(1)}.
$$
\end{lemma}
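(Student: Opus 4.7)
The plan is to reduce the congruence \eqref{eq:cong ll2v} to the divisor-counting argument from Lemma~\ref{lem:congr-bound} by first freezing the squared prime. Explicitly, I fix $\ell_2 \in \cL$, of which there are at most $\#\cL \le 2L$ choices, and count the pairs $(\ell_1, v)$ contributing to $Q_{a,p}(L,h)$ for each such $\ell_2$.

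For a fixed $\ell_2$ the congruence becomes $\ell_1 v \equiv a \overline{\ell_2^{\,2}} \pmod p$, which is well defined since $\gcd(\ell_2, p) = 1$. Write $a' = a \overline{\ell_2^{\,2}} \bmod p$ so that $1 \le a' \le p$. Any solution then satisfies $\ell_1 v = a' + kp$ for a unique nonnegative integer $k$, and from $\ell_1 \le 2L$, $v \le h$ together with the hypothesis $2Lh \le p$ we get that $k$ ranges over at most $\lfloor 2Lh/p \rfloor + 1$ values. For each admissible $k$, the positive integer $a' + kp$ has at most $\tau(a' + kp) = p^{o(1)}$ divisors by \eqref{eq:tau}, so there are at most $p^{o(1)}$ compatible pairs $(\ell_1, v)$ with $\ell_1$ a prime divisor of $a' + kp$ and $v = (a' + kp)/\ell_1$.

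Summing over the at most $2L$ choices of $\ell_2 \in \cL$ then gives
$$
Q_{a,p}(L,h) \le 2L \cdot \bigl(\lfloor 2Lh/p \rfloor + 1\bigr) \cdot p^{o(1)} \le \(Lh/p + 1\) L \, p^{o(1)},
$$
which is the claimed bound. There is essentially no serious obstacle: the only deviation from the proof of Lemma~\ref{lem:congr-bound} is that one prime variable appears squared, and this is handled harmlessly by inverting $\ell_2^{\,2}$ modulo $p$ before running the same divisor-counting step. The factor $L$ out front in the bound (as opposed to the cleaner shape of Lemma~\ref{lem:congr-bound}) is precisely the cost of absorbing the prime $\ell_2$ trivially rather than by a bilinear technique.
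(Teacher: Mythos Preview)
Your proof is correct and follows essentially the same approach as the paper's: fix $\ell_2$, reduce to $\ell_1 v \equiv a\overline{\ell_2^{\,2}} \pmod p$, observe that $\ell_1 v$ lies in a fixed residue class and a bounded interval so takes at most $O(Lh/p+1)$ integer values, and then apply the divisor bound~\eqref{eq:tau} to count factorizations. The paper's proof is terser (it simply says ``the value $\ell_1 v$ can take at most $Lh/p+1$ values and the result follows''), but you have unpacked exactly the intended divisor-counting step.
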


\begin{proof}  
The  congruence~\eqref{eq:cong ll2v} implies that 
$\ell_1 v  \equiv a \ell_2^{-2} \pmod p$. Since  $2L h \le p$,
for each choice $\ell_2$,  the value $\ell_1 v$ can take at most 
$Lh/p+1$ values and the result follows.
\end{proof}

\section{Multilinear sums over products in arithmetic progressions}

\subsection{Some sums with  the   M{\" o}bius function}
\label{sec:sum Mob}
 
We are now able to establish a full analogue of~\cite[Lemma~8]{HaSh}, where the summation is
only over $m$ and $n$ with square-free products $mn$.

 \begin{lemma}\label{smooth}
For integers $N \ge q^{1/4} > 1$,  real $0 < \zeta < 1$ and a positive integer $d = o\((\log N)^2/\log \log N\)$ coprime with $q$,  we have
\begin{align*} 
& \sum_{\substack{N^{\zeta} \le p \le N \\ \gcd(p,dq)=1}} \sum_{\substack{m\sim N/dp \\ \gcd(m,dq)=1}} \mu^2 (m)\\
& \qquad \qquad=   \left( \frac{\xi  \log(1/\zeta)}{\zeta(2)} + o(1) \right)  \prod_{\ell \mid dq}\(1+\frac{1}{\ell}\)^{-1}  \frac{N}{d}.
 \end{align*}
\end{lemma}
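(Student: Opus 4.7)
The plan is to apply Lemma~\ref{lem: SqFAP} to the inner sum over $m$ with $M = N/(dp)$ and modulus $dq$, and then sum the resulting asymptotic over the primes $p$ using Mertens' formula~\eqref{eq:Mer}. The main term from Lemma~\ref{lem: SqFAP} is $\frac{\xi}{\zeta(2)} \prod_{\ell \mid dq}(1+1/\ell)^{-1} \cdot N/(dp)$, and summing $1/p$ over primes $p \in [N^{\zeta},N]$ coprime to $dq$ produces the factor $\log(1/\zeta)$; the correction for primes $p \mid dq$ being excluded is bounded by $\omega(dq)/N^{\zeta}$, which is negligible since $\omega(dq) = O(\log N/\log\log N)$ while $N^{\zeta}$ grows polynomially.

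The delicate point is controlling the accumulated error from Lemma~\ref{lem: SqFAP}: a direct summation yields $\tau(dq) \cdot N/(d^{1/2}\log N)$ via partial summation with the PNT, which is not automatically $o\bigl(\prod_{\ell \mid dq}(1+1/\ell)^{-1} N/d\bigr)$ because $\tau(dq)$ can be as large as $N^{O(1/\log\log N)}$. To address this, I would split the range of $p$ at $N^{1-\epsilon}$ with $\epsilon = \epsilon_N \to 0$ chosen slowly (for instance $\epsilon = 1/\log\log\log N$). On the main range $p \in [N^{\zeta},N^{1-\epsilon}]$, the sum $\sum_p p^{-1/2}$ saves a factor $N^{-\epsilon/2}$, which easily dominates the factor $\tau(dq) \le N^{o(1)}$ and makes the total error $o\bigl(\prod_{\ell \mid dq}(1+1/\ell)^{-1} N/d\bigr)$; here the hypothesis $d = o\bigl((\log N)^2/\log\log N\bigr)$ is what allows $d^{1/2}$ to cooperate with $\log N$ and the lower bound $\prod_{\ell \mid dq}(1+1/\ell)^{-1} \gg 1/\log\log(dq)$ coming from~\eqref{eq:phi}.

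For the tail range $p \in [N^{1-\epsilon},N]$, where Lemma~\ref{lem: SqFAP} gives no useful asymptotic, I would replace it by the cruder upper bound of Lemma~\ref{lem: AP bound}, which majorises the inner sum by $\ll (\varphi(dq)/(dq)) \cdot N/(dp)$ without any divisor-function loss. Summing $1/p$ over this narrow dyadic window contributes $O(\epsilon)$ by~\eqref{eq:Mer}, and combining with the elementary inequality $\varphi(dq)/(dq) = \prod_{\ell \mid dq}(1-1/\ell) \le \prod_{\ell \mid dq}(1+1/\ell)^{-1}$ (from $(1-1/\ell)(1+1/\ell) \le 1$), the tail contributes only $O\bigl(\epsilon \prod_{\ell \mid dq}(1+1/\ell)^{-1} N/d\bigr) = o\bigl(\prod_{\ell \mid dq}(1+1/\ell)^{-1} N/d\bigr)$.

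Finally, truncating the Mertens sum at $N^{1-\epsilon}$ rather than $N$ replaces $\log(1/\zeta)$ by $\log((1-\epsilon)/\zeta) = \log(1/\zeta) + O(\epsilon)$, which is again absorbed into the overall $o(1)$. Putting the three pieces together yields the claimed asymptotic. The main obstacle throughout is the tension between the Lemma~\ref{lem: SqFAP} pointwise error, with its $\tau(dq)$ factor, and the size of the main term, which carries the extra compressing factor $\prod_{\ell \mid dq}(1+1/\ell)^{-1}$; resolving this requires the splitting device above, and it is precisely to make the various pieces cohere that one needs the upper bound on $d$ assumed in the statement.
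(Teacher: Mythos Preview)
Your approach is essentially the paper's: apply Lemma~\ref{lem: SqFAP} to the inner sum on a main range of $p$, evaluate $\sum 1/p$ by Mertens~\eqref{eq:Mer}, and control the tail with Lemma~\ref{lem: AP bound}. The paper splits at $U=N/\tau(dq)^2$ (comparable to your $N^{1-\epsilon}$) and then again at $V=N/(d\log(dq))$, giving three ranges rather than two.

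There is one small gap in your tail treatment. Lemma~\ref{lem: AP bound} carries the hypothesis $M>\log(dq)$, and for $p$ close to $N$ one has $M=N/(dp)\le 1/d$, so the lemma cannot be invoked on the whole interval $[N^{1-\epsilon},N]$. This is precisely why the paper introduces the further cut at $V=N/(d\log(dq))$: on $U<p\le V$ it uses Lemma~\ref{lem: AP bound} exactly as you propose (with the same inequality $\varphi(dq)/(dq)\le\prod_{\ell\mid dq}(1+1/\ell)^{-1}$), while on the innermost range $V<p\le N$ it falls back to the trivial bound $\sum_{m\sim N/(dp)}1\ll N/p$, which after summing via~\eqref{eq:Mer} contributes $O\bigl(N\log\log N/\log N\bigr)$. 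Adding this third range to your argument completes it along the same lines as the paper.
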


\begin{proof} We define
$$
U=  N/\tau(dq)^2 \mand V = N/(d\log (dq)).
$$ 

We first consider the part over primes $p  \leq U$. Applying Lemma~\ref{lem: SqFAP} to the inner sum, we obtain
\begin{align*} 
 \sum_{\substack{N^{\zeta} \le p \le U \\ \gcd(p,dq)=1}} & \sum_{\substack{m\sim N/dp \\ \gcd(m,dq)=1}} \mu^2 (m)\\
 &=   \frac{\xi}{\zeta(2)} 
 \prod_{\ell \mid dq}\(1+\frac{1}{\ell}\)^{-1}\frac{N}{d} 
\sum_{\substack{N^{\zeta} \le p \le  U  \\ \gcd(p,dq)=1}}\frac{1}{p} \\
&\qquad \qquad \qquad\quad  + O\(\sum_{\substack{ N^{\zeta} \le p \le U  \\ \gcd(p,dq)=1}} \(\frac{N}{dp}\)^{1/2}\tau(dq) \)  . 
 \end{align*}
We discard the coprimality condition and extend the summation over all primes $p \le U$. By the prime number theorem and partial summation, the error term is

\begin{align*} 
\sum_{\substack{ N^{\zeta} \le p \le U  \\ \gcd(p,dq)=1}} \(\frac{N}{dp}\)^{1/2}\tau(dq) & = 
N^{1/2}  d^{-1/2}\tau(dq) \sum_{\substack{ N^{\zeta} \le p \le U  \\ \gcd(p,dq)=1}} \frac{1}{p^{1/2}}\\
&\ll N^{1/2} d^{-1/2} \tau(dq)  \frac{U^{1/2}}{\log U} \ll \frac{N}{d^{1/2} \log N} 
 \end{align*}
since by the bound on the divisor function~\eqref{eq:tau} we have $U = N^{1+o(1)}$. 

Since   there are only $O(1)$ primes $p \mid dq$ with $p > N^{\zeta}$, using~\eqref{eq:Mer}, we now obtain 
\begin{align*} 
 \sum_{\substack{N^{\zeta} \le p \le U \\ \gcd(p,dq)=1}}\frac{1}{p}
&= \sum_{N^{\zeta} \le p \le U}\frac{1}{p} + O(N^{-\zeta})  \\
&  = \log \frac{\log N - 2\log \tau(dq) }{\zeta \log N}  +O\(\frac{1}{\log N}\).
 \end{align*}
Thus, using  the Mertens formula~\eqref{eq:Mer}, we
obtain
$$
 \sum_{\substack{N^{\zeta} \le p \le U \\ \gcd(p,dq)=1}}\frac{1}{p}
 =  \log (1/\zeta)+o(1), 
$$
 and thus
\begin{align*}
 & \sum_{\substack{N^{\zeta} \le p \le U \\ \gcd(p,dq)=1}}    \sum_{\substack{m\sim N/dp \\ \gcd(m,dq)=1}} \mu^2 (m) \\
 & \quad\qquad \quad  =  \( \frac{\xi  \log(1/\zeta)}{\zeta(2)} +o(1)\)  \prod_{\ell \mid dq}\(1+\frac{1}{\ell}\)^{-1}  \frac{N}{d} 
+ O\(\frac{N}{d^{1/2}\log N}\)\,.
\end{align*} 
We now  add the contribution from primes $U < p \le N$, which  we
 \begin{itemize}
 \item  put  in the error term;
\item further separate 
into two ranges   $U<p \le V$  and
$V<p \le N$;  
\item  replace $\mu^2 (m)$ with $1$ and abandon the condition $ \gcd(p,dq)=1$. 
\end{itemize}
Hence we derive
\begin{equation}
\label{eq:MEEE}
  \sum_{\substack{N^{\zeta} \le p \le N \\ \gcd(p,dq)=1}}    \sum_{\substack{m\sim N/dp \\ \gcd(m,dq)=1}} \mu^2 (m)  =  \MT + O\(\ET_1 + \ET_2 + \ET_3\) 
\end{equation} 
with the main term
$$
 \MT= 
\( \frac{\xi  \log(1/\zeta)}{\zeta(2)} +o(1)\)  \prod_{\ell \mid dq}\(1+\frac{1}{\ell}\)^{-1}  \frac{N}{d} , 
$$
where
$$
  \ET_1 = \frac{N}{d^{1/2}\log N}, \quad  \ET_2=  \sum_{ U < p \le V}  \sum_{\substack{m\sim N/dp \\ \gcd(m,dq)=1}} 1 , 
 \quad 
 \ET_3=  \sum_{\ V < p \le N}  \sum_{\substack{m\sim N/dp \\ \gcd(m,dq)=1}} 1  
$$
are  the  error terms, which we estimate separately. 

Note that  the range $U<p \le V$  can be empty, 
and thus $\ET_2=0$ in this case.

Since, trivially, $dq$ has at most $O\(\log (dq)\)$ prime divisors, by~\eqref{eq:Mer} 
we obtain  
$$
\prod_{\ell \mid dq}\(1+\frac{1}{\ell}\) \ll \log \log (dq) \ll \log \log N
$$
and thus under the condition  
 $d =o\((\log N)^2/\log \log N\)$, 
 we see that 
\begin{equation}
\label{eq:E1}
\ET_1 = o(\MT).
\end{equation}
 We now follow closely the proof of~\cite[Lemma~8]{HaSh}. 
 
 In the range $U < p \le V$ we apply Lemma~\ref{lem: AP bound}, 
 which yields
 \begin{equation}
\label{eq: mid range}
\ET_2 \ll    \frac{\varphi(dq)}{dq} \frac{N}{d} \sum_{U < p \le N/(d\log (dq))} \frac{1}{p}. 
 \end{equation} 
 Clearly 
\begin{equation}
\label{eq: prod l}
 \frac{\varphi(dq)}{dq} =  \prod_{\ell \mid dq}\(1-\frac{1}{\ell}\) \le  \prod_{\ell \mid dq}\(1+\frac{1}{\ell}\)^{-1}.
 \end{equation} 
 Using the Mertens formula~\eqref{eq:Mer} again, we obtain 
\begin{align*} 
\sum_{U < p \le V} \frac{1}{p}  
&= \log \frac{\log N - \log(d\log (dq))}{  \log N- \log \log \tau(dq)}  +O\(\frac{1}{\log N}\)\\
&  =   \log\( 1+ O\( \frac{\max\{\log(\tau(dq)),  \log(d \log (dq))\}}{\log N}\)\) \\
& \qquad  \qquad  \qquad  \qquad  \qquad  \qquad  \qquad  \qquad  \qquad +O\(\frac{1}{\log N}\) \\
& \ll  \frac{\max\{\log(\tau(dq)),  \log(d\log (dq))\}}{\log N}, 
 \end{align*}
which after substituting in~\eqref{eq: mid range} and recalling~\eqref{eq:tau} and~\eqref{eq: prod l}, implies 
\begin{equation}
\label{eq:E2}
\ET_2 = o(\MT).
\end{equation}

Finally in the range  $V < p \le N$,  we use the trivial bound
$$
\ET_3   \le \sum_{V < p \le N}  \sum_{\substack{m\sim N/dp \\ \gcd(m,dq)=1}} 1
\le N \sum_{V < p \le N}  \frac{1}{p} , 
$$
which, as in the proof of~\cite[Lemma~8]{HaSh}, together with the Mertens formula~\eqref{eq:Mer} implies 
\begin{equation}
\label{eq:E3}
\ET_3 \ll   N \frac{ \log \log N}{\log N} =  o(\MT).
\end{equation}
Substituting~\eqref{eq:E1},   \eqref{eq:E2} and~\eqref{eq:E3} in~\eqref{eq:MEEE}, we conclude the proof.
\end{proof}

We write for convenience  $\cB= \{n:~\gcd(n,q)=1\}$ and let 
$$
c_n = \begin{cases}1 &\text{if $p\mid n \Rightarrow p < N^{\zeta}$},\\
0 &\text{otherwise.}
\end{cases}
$$

We are now able to establish our main technical statement, which is an asymptotic formula
for the sum
\begin{equation}
\label{eq:Sum S}
 S^\sharp=\sum_{\substack{mnr  \in \cB \\ m, n \sim N}} b_r c_m\mu^2(mn),
 \end{equation} 
 where the summation is
only over $m$ and $n$ with square-free products $mn$. A similar 
 sum
$$
 S=\sum_{\substack{mnr  \in \cB \\ m, n \sim N}} b_r c_m
 $$
treated in the proof of~\cite[Lemma~8]{HaSh} can be deal with much simpler as
the variable are independent and we can write
$$
 S=\sum_{\substack{mnr  \in \cB \\ m, n \sim N}} b_r c_m
 = \sum_{\substack{m \in \cB \\ m\sim N}}  c_m 
 \sum_{\substack{n \in \cB \\ n \sim N}} 1
  \sum_{r \in \cB} b_r  .
$$

\begin{lemma}\label{lem:sums} For integers $N \ge q^{1/4} > 1$,  real $1/2 < \zeta \le 1$ 
and any finitely supported  sequence $b_r$, for the sum~\eqref{eq:Sum S}
we have
$$
 S^\sharp = C\(\frac{\xi N}{\zeta(2)}\)^2\prod_{\ell \mid q}\(1+ \frac{2}{\ell}\)^{-1} (1+\log \zeta +o(1)) \sum_{r \in \cB} b_r, 
$$
where 
$$
C=\prod_{\ell}\(1-\frac{1}{(\ell+1)^2}\).
$$
\end{lemma}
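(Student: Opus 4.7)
The approach will follow \cite[Lemma~8]{HaSh} in spirit, but the constraint $\mu^2(mn)$ prevents the $m$ and $n$ sums from separating immediately. My plan is to first factor out $\sum_{r\in\cB} b_r$ (since $b_r$ and the condition $\gcd(r,q)=1$ depend only on $r$), then use the factorization
$$
\mu^2(mn) = \mu^2(m)\,\mu^2(n)\,[\gcd(m,n)=1]
$$
together with the Möbius identity \eqref{inversion} to replace the coprimality condition by a sum $\sum_{d\mid\gcd(m,n)} \mu(d)$. Writing $m = dm'$, $n = dn'$ and noting that the squarefreeness of $m$ and $n$ forces $d$ itself to be squarefree and coprime to $q$, this rearranges $S^\sharp$ into
$$
S^\sharp = \biggl(\sum_{r\in\cB} b_r\biggr)\sum_{\substack{d\text{ sq-free}\\ \gcd(d,q)=1}}\mu(d)\,A_d(N)\,B_d(N),
$$
where $B_d(N)$ counts squarefree $n'\sim N/d$ with $\gcd(n',dq)=1$ and $A_d(N)$ adds the further condition that $dm'$ be $N^\zeta$-smooth.

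Next I would evaluate the two one-variable sums. Lemma~\ref{lem: SqFAP} handles $B_d(N)$ directly. For $A_d(N)$ I would apply the Buchstab-type identity
$$
[m'\text{ is $N^\zeta$-smooth}] = 1 - \sum_{\substack{p>N^\zeta\\ p\mid m'}} 1,
$$
which is valid because $\zeta > 1/2$ and $m'\le N/d$ prevent $m'$ from having two prime factors above $N^\zeta$; the same size constraint forces $\gcd(m'',p)=1$ automatically in the factorization $m' = pm''$, so the non-smooth contribution matches exactly the double sum treated in Lemma~\ref{smooth}. Combining the two inputs gives
$$
A_d(N) = \frac{\xi(1+\log\zeta)}{\zeta(2)}\prod_{\ell\mid dq}\bigl(1+\tfrac{1}{\ell}\bigr)^{-1}\frac{N}{d} + o\!\bigl(N/d\bigr),
$$
with $1+\log\zeta = 1 - \log(1/\zeta)$ arising from subtracting the non-smooth part.

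The third step is a standard Euler-product computation. Substituting the main terms for $A_d(N)B_d(N)$ and summing $\mu(d)/d^2$ (extendable to all $d\ge 1$ since $\mu(d)$ enforces squarefreeness) against the local factors $\prod_{\ell\mid d}(1+1/\ell)^{-2}$ produces
$$
\prod_{\ell\nmid q}\!\Bigl(1 - \frac{1}{\ell^2(1+1/\ell)^2}\Bigr) = \prod_{\ell\nmid q}\!\Bigl(1 - \frac{1}{(\ell+1)^2}\Bigr).
$$
Paired with the prefactor $\prod_{\ell\mid q}(1+1/\ell)^{-2}$ from $A_d B_d$, the elementary identity
$$
\bigl(1+\tfrac{1}{\ell}\bigr)^{-2}\bigl(1-\tfrac{1}{(\ell+1)^2}\bigr)^{-1} = \bigl(1+\tfrac{2}{\ell}\bigr)^{-1}
$$
(equivalent to $(\ell+1)^2 - 1 = \ell(\ell+2)$) for each $\ell\mid q$ collapses the $q$-dependent factors into $\prod_{\ell\mid q}(1+2/\ell)^{-1}$ and leaves the absolute constant $C = \prod_\ell\bigl(1-1/(\ell+1)^2\bigr)$, exactly matching the stated formula.

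The main obstacle is that Lemma~\ref{smooth} is only available for $d = o((\log N)^2/\log\log N)$, so the asymptotic for $A_d(N)$ is not uniform in $d$. I would split the $d$-summation at $D := \log N$: for $d\le D$ the precise formula applies, while for $d > D$ the trivial bounds $A_d(N), B_d(N) \ll N/d$ yield $\sum_{d>D}(N/d)^2 \ll N^2/D = o(N^2)$, and the Euler-series tail $\sum_{d>D} d^{-2}$ is similarly negligible. The restriction that $d$ be $N^\zeta$-smooth (automatic from $c_m$) removes only the tail $\prod_{\ell > N^\zeta}(1-1/(\ell+1)^2) = 1 + O(N^{-\zeta})$ of the Euler product. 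All of these secondary errors are absorbed into the $o(1)$ factor in the claimed asymptotic.
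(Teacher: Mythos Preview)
Your proposal is correct and follows essentially the same route as the paper's proof: the paper likewise factors out $\sum_{r\in\cB}b_r$, opens $\mu^2(mn)$ via the M\"obius identity~\eqref{inversion}, truncates the resulting $d$-sum at $D=\log N$ with the trivial bound $N^2/D$ for the tail, evaluates the two inner sums for $d\le D$ by Lemmas~\ref{lem: SqFAP} and~\ref{smooth} (the latter invoked through the same Buchstab-type splitting enabled by $\zeta>1/2$), and then collapses the Euler product using the identity $(1+1/\ell)^2\bigl(1-1/(\ell+1)^2\bigr)=1+2/\ell$. Even the treatment of the $c_d$ smoothness constraint on $d$ and the resulting $O(N^{-\zeta})$ correction to the Euler product match.
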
 

\begin{proof}
Using the elementary identity~\eqref{inversion}, we have
$$
 S^\sharp = \sum_{\substack{m,n \in \cB \\ m,n \sim N}}  c_m\mu^2(m)\mu^2(n) \sum_{d \mid \gcd(n,m)} \mu(d)  \ \sum_{r \in \cB} b_r  \,.
$$

Switching summation and  using the multiplicativity of coefficients $c_m$, we obtain (using that $\mu^3(d)=\mu(d)$)
\begin{align*} 
 S^\sharp&=\sum_{\substack{d \leq \psi N\\ \gcd(d,q)=1}} \mu(d)c_d \sum_{\substack{m\sim N/d,\, \gcd(m,dq)=1 \\ 
n\sim N/d, \, \gcd(n,dq)=1}} \mu^2(m)\mu^2(n)c_m  \sum_{r \in \cB} b_r \\ 
&=\sum_{\substack{d \leq \psi N\\ \gcd(d,q)=1}} \mu(d)c_d S_1(d) S_2(d) \sum_{r \in \cB} b_r,
 \end{align*} 
 where 
$$
 S_1(d) = \sum_{\substack{m\sim N/d \\\gcd(m,dq)=1}} \mu^2(m)c_m  \mand  S_2(d) = \sum_{\substack{n\sim N/d \\ \gcd(n,dq)=1}} \mu^2(n).
$$
We set 
\begin{equation}
\label{eq: def D}
D =  \log N 
 \end{equation} 
and  estimate the contribution in $S$ from $d \ge D$ using  trivial
estimate 
$$
 |S_1(d)|, |S_2(d)| \le  N/d
 $$
 as $O(N^2/D)$. Thus we obtain 
\begin{equation}
\label{eq:SSSD}
 S^\sharp=\gamma \sum_{r \in \cB} b_r,
 \end{equation} 
 where 
\begin{equation}
\label{eq: gamma}
\gamma  = \sum_{\substack{d \le D\\ \gcd(d,q)=1}} \mu(d)c_d S_1(d) S_2(d)  + O(N^2/D).
 \end{equation} 

So we now assume $d \le D$. Then, by Lemma~\ref{lem: SqFAP} and the divisor bound, we have
$$S_2(d) = \(\frac{\xi}{\zeta(2)}+o(1)\)\prod_{\ell \mid dq}\(1+\frac{1}{\ell}\)^{-1} \frac{N}{d}.
$$ We now evaluate $S_1(d)$. We remark (since $\zeta > \frac12$) that
$$
\sum_{\substack{m\sim N/d \\\gcd(m,dq)=1}} \mu^2(m)c_m  =\sum_{\substack{m\sim N/d \\\gcd(m,dq)=1}}\mu^2(m) - 
\sum_{\substack{N^{\zeta} \le p\le N \\ \gcd(p,dq)=1}}\sum_{\substack{m\sim N/(dp) \\ \gcd(m,dq)=1}} \mu^2(m)\,.$$ 
Lemmas~\ref{lem: SqFAP} and~\ref{smooth} then give 
\begin{align*} 
\sum_{\substack{m\sim N/d \\\gcd(m,dq)=1}}&\mu^2(m)\\&  =  \(\frac{\xi}{\zeta(2)}+o(1)\)\prod_{\ell \mid dq}\(1+\frac{1}{\ell}\)^{-1} \frac{N}{d} + O\(\(N/d\)^{1/2} \tau(dq)\) 
 \end{align*} 
and 
$$
\sum_{\substack{N^{\zeta} \le p\le N \\ \gcd(p,dq)=1}}\sum_{\substack{m\sim N/(dp) \\ \gcd(m,dq)=1}} \mu^2(m) =
\(\frac{\xi \log(1/\zeta) }{\zeta(2)}+o(1)\)  \prod_{\ell \mid dq}\(1+\frac{1}{\ell}\)^{-1}  \frac{N}{d} ,
$$ 
respectively.
Thus subtracting, we derive
$$ 
S_1(d) = \(\frac{\xi}{\zeta(2)} (1+\log\zeta )+o(1)\)\prod_{\ell \mid dq}\(1+\frac{1}{\ell}\)^{-1}  \frac{N}{d}.
$$Hence for $d \le D$ we have
$$ 
S_1(d)S_2(d) = \(\frac{\xi N}{\zeta(2)d}\)^2\prod_{\ell \mid dq}\(1+\frac{1}{\ell}\)^{-2} (1+\log\zeta +o(1))
$$
 which after the substitution in~\eqref{eq: gamma} implies
 \begin{equation}
\label{eq: gamma asymp1}
\begin{split} 
\gamma = \(\frac{\xi N}{\zeta(2)}\)^2& \prod_{\ell \mid q}\(1+\frac{1}{\ell}\)^{-2} (1+\log\zeta +o(1))\\
&\sum_{\substack{d \leq D\\ \gcd(d,q)=1}}\frac{\mu(d)}{d^2}c_d\prod_{\ell \mid d}\(1+\frac{1}{\ell}\)^{-2}  
+ O(N^2/D).
\end{split} 
 \end{equation} 
Now,
\begin{align*} 
\sum_{\substack{d \leq D\\ \gcd(d,q)=1}}\frac{\mu(d)}{d^2}c_d&\prod_{\ell \mid d}\(1+\frac{1}{\ell}\)^{-2}\\
& = \sum_{\substack{d=1\\ \gcd(d,q)=1}}^{\infty} \frac{\mu(d)}{d^2}c_d\prod_{\ell \mid d}\(1+\frac{1}{\ell}\)^{-2}  + O(D^{-1}).    
 \end{align*} 
Recalling the definition of the coefficients $c_d$, we obtain
\begin{equation}
\label{eq: gamma asymp2}
\begin{split} 
\sum_{\substack{d \leq D\\ \gcd(d,q)=1}}\frac{\mu(d)}{d^2}c_d&\prod_{\ell \mid d}\(1+\frac{1}{\ell}\)^{-2}\\
& = \prod_{\substack{\ell \le N^{\zeta} \\ \gcd(\ell,q)=1}} \(1-\frac{1}{(\ell+1)^2}\) + O(D^{-1})    .
\end{split} 
 \end{equation} 
 Furthermore
\begin{align*}
\prod_{\substack{\ell \le N^{\zeta} \\ \gcd(\ell,q)=1}}& \(1-\frac{1}{(\ell+1)^2}\)  \\
&= \prod_{\substack{\ell \le N^{\zeta} \\ \ell\mid q}} \(1-\frac{1}{(\ell+1)^2}\)^{-1}
\prod_{\ell \le N^{\zeta}} \(1-\frac{1}{(\ell+1)^2}\) 
 \\
&= \prod_{\ell\mid q} \(1-\frac{1}{(\ell+1)^2}\)^{-1}\prod_{\ell}\(1-\frac{1}{(\ell+1)^2}\)(1+O(N^{-\zeta})).
\end{align*} 
Combining this with~\eqref{eq: gamma asymp2} and then substituting in~\eqref{eq: gamma asymp1}, 
we obtain 
\begin{align*}
\gamma = \(\frac{\xi N}{\zeta(2)}\)^2& \prod_{\ell \mid q}\(1+\frac{1}{\ell}\)^{-2} 
\prod_{\ell\mid q} \(1-\frac{1}{(\ell+1)^2}\)^{-1}\\
&\prod_{\ell}\(1-\frac{1}{(\ell+1)^2}\)
(1+\log\zeta +o(1))  
+ O(N^2/D).
\end{align*} 
Since
$$
\(1+\frac{1}{\ell}\)^2 \(1-\frac{1}{(\ell+1)^2}\)   = 1 +\frac{2}{\ell} , 
$$
we obtain 
\begin{align*}
\gamma = \(\frac{\xi N}{\zeta(2)}\)^2& \prod_{\ell \mid q}\(1 +\frac{2}{\ell} \)^{-1}\\
&\prod_{\ell}\(1-\frac{1}{(\ell+1)^2}\)
(1+\log\zeta +o(1))  
+ O(N^2/D).
\end{align*} 
By the version of the Mertens formula~\eqref{eq:Mer}, similarly to~\eqref{eq:phi} we have 
\begin{equation}
\label{eq: prod 2l}
 \prod_{\ell \mid q}\(1 +\frac{2}{\ell} \)^{-1} \ge  \prod_{\ell \mid q}\(1 +\frac{1}{\ell} \)^{-2} 
 \gg (\log \log q)^{-2}.
\end{equation} 

Therefore with the above choice~\eqref{eq: def D} of $D$, we obtain 
$$
\gamma = \(\frac{\xi N}{\zeta(2)}\)^2 \prod_{\ell \mid q}\(1 +\frac{2}{\ell} \)^{-1}\prod_{\ell}\(1-\frac{1}{(\ell+1)^2}\)
\(1+\log\zeta +o(1) \), 
$$
which together with~\eqref{eq:SSSD} concludes the proof. 
\end{proof}
 
\begin{remark} For prime $q$ the proofs of Lemmas~\ref{smooth} and~\ref{lem:sums}   simplify quite significantly and the factors depending on $q$ all become  equal to $1$. 
 \end{remark}

 Taking $\zeta=1$ in  Lemma~\ref{lem:sums} (or proceeding in a  similar manner, 
 but using only the estimates on $S_2(d)$), we obtain: 
 
  \begin{cor}\label{cor:productsqrfree} We have,
 $$\sum_{\substack{mnr  \in \cB \\ m, n \sim N}} b_r \mu^2(mn) = (C+o(1))\(\frac{\xi N}{\zeta(2)}\)^2\prod_{\ell\mid q}\(1+\frac{2}{\ell}\)^{-1}  \sum_{r \in \cB} b_r.$$
 \end{cor}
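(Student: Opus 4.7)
The plan is to mirror the argument from the proof of Lemma \ref{lem:sums}, but since we have dropped the smoothness weight $c_m$, the input from Lemma \ref{smooth} is no longer needed; only Lemma \ref{lem: SqFAP} enters. Essentially, this corresponds to $\zeta = 1$ in Lemma \ref{lem:sums}, in which case $1 + \log\zeta = 1$, and the ``$\log(1/\zeta)$'' correction stemming from the smoothness restriction disappears.

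I would start from the identity $\mu^2(mn) = \mu^2(m)\mu^2(n)[\gcd(m,n)=1]$, valid because $mn$ is squarefree iff both $m$ and $n$ are squarefree and coprime, and then open the coprimality condition via \eqref{inversion}. After interchanging summation, the $m$ and $n$ variables decouple (unlike in Lemma \ref{lem:sums}, no $c_m$ breaks the symmetry), and one obtains
\begin{equation*}
\sum_{\substack{mnr \in \cB \\ m,n\sim N}} b_r \mu^2(mn) \;=\; \Biggl(\sum_{r\in\cB} b_r\Biggr) \sum_{\substack{d\le \psi N\\ \gcd(d,q)=1}} \mu(d)\, S_2(d)^2,
\end{equation*}
where $S_2(d)$ is defined as in the proof of Lemma \ref{lem:sums} (the squarefree restriction on $d$ forces $\mu^2(d)=1$ automatically). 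I would then truncate at $D = \log N$ as in \eqref{eq: def D}: for $d\le D$ insert the asymptotic from Lemma \ref{lem: SqFAP}, while for $d>D$ use the trivial bound $S_2(d)\ll N/d$ to collect an acceptable remainder of size $O(N^2/D)$.

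Substituting the asymptotic and extracting $q$-factors gives, up to admissible errors,
\begin{equation*}
\Bigl(\tfrac{\xi N}{\zeta(2)}\Bigr)^2 \prod_{\ell\mid q}\Bigl(1+\tfrac{1}{\ell}\Bigr)^{-2} \sum_{\substack{d=1\\ \gcd(d,q)=1}}^{\infty} \frac{\mu(d)}{d^2}\prod_{\ell\mid d}\Bigl(1+\tfrac{1}{\ell}\Bigr)^{-2},
\end{equation*}
after extending the $d$-sum to infinity (the tail is $O(D^{-1})$). The inner Euler product equals $\prod_{\ell\nmid q}(1 - (\ell+1)^{-2}) = C\prod_{\ell\mid q}(1-(\ell+1)^{-2})^{-1}$, and finally the identity $(1+1/\ell)^2(1-(\ell+1)^{-2}) = 1+2/\ell$, already used in the proof of Lemma \ref{lem:sums}, collapses all $q$-dependent factors into $\prod_{\ell\mid q}(1+2/\ell)^{-1}$, producing the claimed formula.

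There is no real obstacle: the argument is a strict simplification of the one carried out for Lemma \ref{lem:sums}, since the absence of $c_m$ both eliminates the need for Lemma \ref{smooth} and makes the outer double sum over $(m,n)$ factorise directly into $S_2(d)^2$. The only point requiring any care is confirming that the truncation error $O(N^2/D)$ is $o$ of the main term, which follows from \eqref{eq: prod 2l} exactly as in the proof of Lemma \ref{lem:sums}.
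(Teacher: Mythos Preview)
Your proposal is correct and matches the paper's own justification: the paper states that the corollary follows by ``taking $\zeta=1$ in Lemma~\ref{lem:sums} (or proceeding in a similar manner, but using only the estimates on $S_2(d)$)'', and what you have written is precisely the second of these two routes spelled out in detail. The only cosmetic difference is that you bypass the $c_d$ factor from the outset rather than setting $\zeta=1$ and observing that $1+\log\zeta=1$; both lead to the same Euler product computation you describe.
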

 
 \begin{remark} Removing the summation over $r$ in Corollary~\ref{cor:productsqrfree}, that is, taking the sequence $b_r$ which is supported only on $r=1$, 
 we obtain an asymptotic formula  for the number of products $mn$ being square-free with $m,n$ of same size and coprime to a fixed number. \end{remark}

\subsection{Some  sieving result}
\label{sec:sumprod-AP}

We now specify
\begin{equation}
\label{eq: zeta}
\zeta = \rho(1 + \varepsilon) = \frac{1 + \varepsilon}{e^{1/2}}, 
\end{equation} 
where $\rho$ is given by~\eqref{eq: rho}, and 
derive  an analogue of~\cite[Lemma~9]{HaSh}.

\begin{lemma}\label{lem:sieve} 
Let be $N \ge q^{1/4} > 1$ integers  and let  $\varepsilon >0$ be a sufficiently small
fixed real number.   
Suppose that $\cA \subseteq \cB$ is a set such that  for a    finitely supported  sequence $b_r$ and 
some $\lambda > 0$ and $\eta > 0$, we have 
\begin{equation}\label{Lambda}
\sum_{\substack{mnr  \in \cA\\ m, n \sim N}} a_n b_r \mu^2(mn)
=\lambda
\sum_{\substack{mnr \in \cB\\ m, n \sim N}} a_n b_r \mu^2(mn)
+ O(\lambda x^{1 - \eta})
\end{equation}
for any sequence $a_n = O(1)$. 
Then for $\zeta$ given by~\eqref{eq: zeta}
\begin{align*}
\sum_{\substack{mnr  \in \cA\\ m, n \sim N}} & b_r c_n c_m \mu^2(mn)\\
& \ge C\(\frac{\xi N}{\zeta(2)}\)^2\prod_{\ell \mid q}\(1 +\frac{2}{\ell} \)^{-1}(2\log(1+\varepsilon)+o(1)) \sum_{r \in \cB} b_r\\
& \qquad \qquad \qquad \qquad \qquad \qquad \qquad \qquad \qquad \quad +
O(\lambda x^{1 - \eta})\,.
\end{align*}
\end{lemma}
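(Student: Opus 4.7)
The plan is a classical Buchstab-style lower bound. Start from the pointwise inequality
$$
c_m c_n \;\ge\; c_m + c_n - 1,
$$
valid since $c_m, c_n \in \{0,1\}$ (equality fails only when both $m$ and $n$ have a prime factor exceeding $N^{\zeta}$, which is precisely the loss we can afford in a lower bound). Multiplying by the nonnegative weight $b_r\mu^2(mn)$ and summing over $mnr \in \cA$ with $m, n \sim N$ yields
$$
\sum_{\substack{mnr \in \cA \\ m, n \sim N}} b_r c_m c_n \mu^2(mn) \;\ge\; \Sigma_m + \Sigma_n - \Sigma_0,
$$
where $\Sigma_m,\Sigma_n$ denote the same sum with $c_m c_n$ replaced by $c_m$ and $c_n$ respectively, and $\Sigma_0$ is the sum with no $c$-factor.

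Next, invoke the sieve hypothesis~\eqref{Lambda} with the bounded choices $a_n = c_n$ and $a_n = 1$ to express
$$
\Sigma_n = \lambda \sum_{\substack{mnr \in \cB \\ m,n \sim N}} c_n b_r \mu^2(mn) + O(\lambda x^{1-\eta}),
\quad
\Sigma_0 = \lambda \sum_{\substack{mnr \in \cB \\ m,n \sim N}} b_r \mu^2(mn) + O(\lambda x^{1-\eta}).
$$
The $m \leftrightarrow n$ symmetry of the constraints $mnr \in \cA$, $m,n \sim N$, and $\mu^2(mn)$ gives $\Sigma_m = \Sigma_n$, and the analogous identity for the $\cB$-sums. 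So only two asymptotic evaluations are needed.

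These are precisely what Section~\ref{sec:sum Mob} provides: Lemma~\ref{lem:sums} evaluates the $c_m$-weighted $\cB$-sum and produces the factor $1 + \log \zeta$, while Corollary~\ref{cor:productsqrfree} evaluates the unweighted $\cB$-sum, producing a factor $1$ in the same shape. Combining gives a main term proportional to
$$
2(1 + \log\zeta) - 1 \;=\; 1 + 2\log\zeta,
$$
and substituting the choice $\zeta = (1+\varepsilon)/e^{1/2}$ from~\eqref{eq: zeta} collapses this to $2\log(1+\varepsilon)$, which is exactly the factor in the claimed inequality.

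There is no genuine obstacle: everything reduces to bookkeeping once the Buchstab inequality is noted. The points to check are that $c_n = O(1)$ (clear since $c_n \in \{0,1\}$), that the symmetry $\Sigma_m = \Sigma_n$ is legitimate (it is, because swapping $m$ and $n$ preserves $\cA$, $\cB$, the ranges $m,n \sim N$ and the factor $\mu^2(mn)$), and that $\zeta > 1/2$ for small $\varepsilon$, so that Lemma~\ref{lem:sums} is applicable; this last holds since $1/e^{1/2} > 1/2$.
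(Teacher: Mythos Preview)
Your argument is correct and is essentially identical to the paper's: the paper invokes ``Balog's observation'' to write the left-hand side as $E-F$ with $E=\Sigma_m$ and $F=\sum b_r(1-c_n)\mu^2(mn)=\Sigma_0-\Sigma_n$, which is just a repackaging of your inequality $c_mc_n\ge c_m+c_n-1$, and then applies~\eqref{Lambda} together with Lemma~\ref{lem:sums} and Corollary~\ref{cor:productsqrfree} exactly as you do. Your explicit check that the $m\leftrightarrow n$ symmetry is what allows~\eqref{Lambda} (stated only for weights $a_n$) to handle the $c_m$-weighted sum is a point the paper leaves implicit.
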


\begin{proof} Generally, we follow very closely the argument of the proof of~\cite[Lemma~9]{HaSh}, 
however here we use Lemma~\ref{lem:sums}  and Corollary~\ref{cor:productsqrfree} 
instead of~\cite[Lemmas~6 and~8]{HaSh} in the corresponding place.

Using the  observation of Balog~\cite{Bal}, we have
$$
\sum_{\substack{mnr  \in \cA\\ m, n \sim N}}  b_r c_n c_m\mu^2(mn) \ge E - F
$$
where
$$
E = \sum_{\substack{mnr  \in \cA\\ m, n \sim N}}  b_r c_m\mu^2(mn) \,, \qquad F = \sum_{\substack{mnr  \in \cA\\ m, n \sim N}}  b_r h_n\mu^2(mn)\,,
$$
and $h_n = 1-c_n$.  By the assumption~\eqref{Lambda}, we have
$$
E = \lambda \sum_{\substack{mnr  \in \cB \\ m, n \sim N}} b_r c_m\mu^2(mn) +  O(\lambda x^{1 - \eta}) 
$$ 
and 
$$F = \lambda \sum_{\substack{mnr  \in \cB \\ m, n \sim N}} b_r h_n\mu^2(mn) +  O(\lambda x^{1 - \eta}).$$ 
Now, by Lemma~\ref{lem:sums}  
$$
\sum_{\substack{mnr  \in \cB \\ m, n \sim N}} b_r c_m\mu^2(mn) = C\(\frac{\xi N}{\zeta(2)}\)^2\prod_{\ell \mid q}\(1 +\frac{2}{\ell} \)^{-1}(1+\log \zeta+o(1)) \sum_{r \in \cB} b_r \,
$$ 
while by a combination of  Lemma~\ref{lem:sums}  and Corollary~\ref{cor:productsqrfree}
$$\sum_{\substack{mnr  \in \cB \\ m, n \sim N}} b_r h_n\mu^2(mn) =  C\(\frac{\xi N}{\zeta(2)}\)^2\prod_{\ell \mid q}\(1 +\frac{2}{\ell} \)^{-1}(-\log \zeta+o(1)) \sum_{r \in \cB} b_r.$$ After summation, we obtain 
$$E-F \geq  C\(\frac{\xi N}{\zeta(2)}\)^2\prod_{\ell \mid q}\(1 +\frac{2}{\ell} \)^{-1}(1+2\log \zeta+o(1)) \sum_{r \in \cB} b_r.$$ Remarking that $1+2\log \zeta=2\log(1+\varepsilon)$ we conclude the proof.
 \end{proof}
 
In the special case where $\cA = \cA_{a,q}(x)$ is the set of integers $k \in [x,2x]$ 
with $k \equiv a  \pmod q$, using that $2  \log(1+\varepsilon) > \varepsilon$ if $\varepsilon >0$ is a sufficiently small,
we derive:

\begin{cor}\label{cor:AP} 
Assume that the condition of Lemma~\ref{lem:sieve} 
holds with $x_0(\varepsilon)$ for the set $\cA =  \cA_{a,q}(x)$
with $\lambda = 1/ \varphi(q)$,  where  $x_0(\varepsilon)$ depends
only on $\varepsilon$ and is sufficiently large. 
Then, for a    finitely supported  sequence of positive real numbers $b_r$ and 
some  $\eta > 0$, we have 
\begin{align*}  
\sum_{\substack{mnr  \in  \cA_{a,q}(x)\\ m, n \sim N}}   & b_r  c_n c_m \mu^2(mn)\\& 
\ge \varepsilon C\(\frac{\xi N}{\zeta(2)}\)^2\prod_{\ell \mid q}\(1 +\frac{2}{\ell} \)^{-1} \sum_{r \in \cB} b_r
+ O(x^{1 - \eta} q^{-1})\,.
\end{align*}
\end{cor}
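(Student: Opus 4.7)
The plan is to directly specialize Lemma~\ref{lem:sieve} to the set $\cA = \cA_{a,q}(x)$ of integers in $[x,2x]$ congruent to $a$ modulo $q$, with $\lambda = 1/\varphi(q)$. Since the equidistribution hypothesis \eqref{Lambda} is precisely what we are assuming in the corollary, the lemma applies verbatim and yields
$$
\sum_{\substack{mnr \in \cA_{a,q}(x) \\ m,n \sim N}} b_r c_n c_m \mu^2(mn) \ge C\(\frac{\xi N}{\zeta(2)}\)^2 \prod_{\ell\mid q}\(1+\frac{2}{\ell}\)^{-1}\bigl(2\log(1+\varepsilon)+o(1)\bigr) \sum_{r\in\cB} b_r + O(\lambda x^{1-\eta}).
$$

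Two cleanups remain. First, I would replace the constant $2\log(1+\varepsilon)+o(1)$ by $\varepsilon$: the Taylor expansion gives $2\log(1+\varepsilon) - \varepsilon = \varepsilon - \varepsilon^2 + O(\varepsilon^3)$, which is strictly positive for all sufficiently small fixed $\varepsilon>0$ and whose positive slack comfortably absorbs the $o(1)$ term, so for $x \ge x_0(\varepsilon)$ we have $2\log(1+\varepsilon)+o(1) \ge \varepsilon$. Since $b_r \ge 0$ by hypothesis, this inequality may be inserted into the main term without changing the direction of the lower bound.

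Second, I would simplify the error $O(\lambda x^{1-\eta}) = O(x^{1-\eta}/\varphi(q))$. The Mertens-type bound \eqref{eq:phi} gives $\varphi(q) \gg q/\log\log q$, hence $1/\varphi(q) \ll (\log\log q)/q$. The harmless factor $\log\log q$ can be absorbed by shrinking the saving exponent $\eta$ by an arbitrarily small amount (or equivalently, using $\log\log q = x^{o(1)}$), yielding the desired shape $O(x^{1-\eta}q^{-1})$. This completes the derivation.

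In truth there is no real obstacle: the corollary is a clean specialization of Lemma~\ref{lem:sieve}, and the only substantive bookkeeping is the elementary inequality $2\log(1+\varepsilon) > \varepsilon$ for sufficiently small $\varepsilon > 0$ together with the standard Mertens-type lower bound for $\varphi(q)$. The hypothesis that $b_r \ge 0$ is essential for preserving the direction of the inequality when replacing the leading constant.
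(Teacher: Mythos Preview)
Your proposal is correct and matches the paper's approach exactly: the paper derives the corollary in one line by specializing Lemma~\ref{lem:sieve} to $\cA=\cA_{a,q}(x)$ with $\lambda=1/\varphi(q)$ and invoking $2\log(1+\varepsilon)>\varepsilon$ for small $\varepsilon$, which is precisely what you do (with the extra bookkeeping on the error term that the paper leaves implicit).
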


\subsection{Products in arithmetic progressions}

We now define  the following parameter
\begin{equation}
\label{eq: nu}
\nu= \rf{1/\varepsilon}.
\end{equation}
For a given $q$, we consider the set of integers $r$ that are 
products of $13$ distinct primes of the form 
\begin{equation}
\label{eq:set r}
r = \ell_1\ldots \ell_{12} s\mand
\gcd(r,q)=1, 
\end{equation}
where 
\begin{equation}
\label{eq:primes}
\ell_1, \ldots, \ell_{12} \sim q^{1/8}, \qquad 
 s \sim q^{1/\nu},
\end{equation}
and let $b_r$ be the characteristic 
function of this set. We note that $b_r$ is supported on the interval $[R, \psi^{13} R]$ with $R=q^{3/2 +1/\nu}$.

As before, we also define by $\cA_{a,q}(x)$ the set of integers $k \in [x,2x]$ 
with $k \equiv a  \pmod q$.

Next, repeating word-by-word the argument of the proof of~\cite[Lemma~14]{HaSh}, 
but using Lemma~\ref{lem:CharSquarfree}  instead of the classical Burgess 
bound as in~\cite{HaSh},  we now show that for any fixed sufficiently small $\varepsilon> 0$ 
the conditions of Lemma~\ref{lem:sieve}  are satisfied for  the set 
$\cA = \cA_{a,q}(x)$ and the choice of $b_r$ with $N=q^{1/4 + \varepsilon}$
upon writing $x=N^2R$.  

\begin{lemma}  
\label{lem:Cong} Let $\varepsilon > 0$ be sufficiently small, $q> 1$ and $N=q^{1/4 + \varepsilon}$.    
Suppose that the sequence  $b_r$ is the characteristic 
function of the set  defined by~\eqref{eq:set r} and~\eqref{eq:primes}.
Then for integers $a$ and $q$ with $\gcd(a,q)=1$ and such that $q$ is 
cube-free  we have
$$
\sum_{\substack{mnr  \in \cA_{a,q}(x)\\ m, n \sim N}}
a_n b_r \mu^2(mn) =  \frac{1}{\varphi(q)}\sum_{\substack{mnr  \in \cB\\ m, n \sim N}}
a_n b_r \mu^2(mn)
 + O\(q^{-1} x^{1-\eta}\)
$$
with $\eta = \varepsilon^4$,
 $R=q^{3/2 + 1/\nu}$, where $\nu$ is as in~\eqref{eq: nu}, and $x=N^2R$, and any sequence $a_n$ 
 satisfying $|a_n| \le n^{o(1)}$. 
\end{lemma}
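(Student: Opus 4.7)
The plan is to detect the congruence $k \equiv a \pmod q$ via the orthogonality relation~\eqref{eq:orth} for characters modulo $q$, transforming the sum into
$$\frac{1}{\varphi(q)} \sum_{\chi \in \cX_q} \bar\chi(a)\, T(\chi), \qquad T(\chi) = \sum_{\substack{mnr \in \cB\\ m,n \sim N}} a_n b_r \mu^2(mn)\, \chi(mnr).$$
The principal character $\chi_0$ yields exactly the main term $\varphi(q)^{-1}\sum_{mnr \in \cB,\, m,n\sim N} a_n b_r \mu^2(mn)$, so the task reduces to bounding the contribution of the non-principal characters by $O(q^{-1}x^{1-\eta})$.

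To separate the three variables, I would use the identity $\mu^2(mn) = \mu^2(m)\mu^2(n)[\gcd(m,n)=1]$ combined with~\eqref{inversion}, writing $m = dm'$ and $n = dn'$ for squarefree $d$ coprime to $q$. This produces
$$T(\chi) = \sum_{\substack{d\ \text{squarefree}\\ \gcd(d,q)=1}} \mu(d)\chi(d)^2\, X_d(\chi)\, Y_d(\chi)\, P(\chi),$$
where $X_d(\chi)$ runs over squarefree $m'\sim N/d$ coprime to $d$, $Y_d(\chi)$ is the analogous weighted sum over $n'\sim N/d$ with coefficients $a_{dn'}$, and $P(\chi) = \sum_r b_r \chi(r)$. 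The critical point is that $X_d(\chi)$ is a character sum over squarefree integers in a range of length comparable to $N/d = q^{(1/4+\varepsilon)(1+o(1))}$, which, because $q$ is cube-free, places it exactly in the regime of Corollary~\ref{cor:CharSquarfree-eps}.

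For $d \le N^{\delta}$ with $\delta$ a small fixed multiple of $\varepsilon$, Corollary~\ref{cor:CharSquarfree-eps} gives the pointwise bound $|X_d(\chi)| \le (N/d)^{1 - c_0(\varepsilon/2)^2}$. Contributions from $d > N^\delta$ are controlled trivially: the number of admissible $(m,n)$ with $d\mid \gcd(m,n)$ is $\ll (N/d)^2$, so summing over $d$ yields a loss of $N^{-\delta}$. For the remaining factor $Y_d(\chi)P(\chi)$, I would apply Cauchy--Schwarz in $\chi$ combined with the mean-value estimate Lemma~\ref{lem:Aver}, using $|a_n| \le n^{o(1)}$ and $|b_r|\le 1$, together with the fact that $NR/q \gg 1$ since $R = q^{3/2+1/\nu}$. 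This yields savings of order $q^{-1/2}$ on the character average, which combines with the factor $\varphi(q)^{-1}$ from orthogonality.

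The main obstacle is bookkeeping: one must verify that the three savings -- the pointwise factor $N^{-c_0\varepsilon^2/4}$ from Corollary~\ref{cor:CharSquarfree-eps}, the $q^{-1/2}$ from Cauchy--Schwarz and the mean-value estimate, and the $N^{-\delta}$ from the truncation in $d$ -- jointly overcome the prefactor $\varphi(q)^{-1}\approx q^{-1}$ required in the target bound relative to $x = N^2R = q^{2+1/\nu+2\varepsilon}$. Because $\eta = \varepsilon^4$ is very small compared to $\varepsilon^2$, there is ample room, and for $\varepsilon$ sufficiently small the numerics close. The structure of the proof then parallels that of~\cite[Lemma~14]{HaSh} essentially verbatim, with Corollary~\ref{cor:CharSquarfree-eps} substituted for the Burgess bound at the single point where squarefreeness of the variable $m$ is exploited.
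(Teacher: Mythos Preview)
Your setup via orthogonality, the M\"obius decoupling $\mu^2(mn)=\sum_{d\mid\gcd(m,n)}\mu(d)\mu^2(m)\mu^2(n)$, and the identification of the principal character with the main term are all correct and match the paper. The gap is in the treatment of the nonprincipal characters.

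You treat $P(\chi)=\sum_r b_r\chi(r)$ as a single black-box sum and propose Cauchy--Schwarz on $|Y_d(\chi)||P(\chi)|$ together with Lemma~\ref{lem:Aver}. This does not close. Since $Y_d$ has length $N/d<q$, the mean value gives only $\sum_\chi|Y_d|^2\ll q(N/d)$, while $\sum_\chi|P|^2\ll R^{2+o(1)}$; combined with the pointwise bound on $X_d$ one obtains
\[
\frac{1}{\varphi(q)}\sum_{\chi\ne\chi_0}|X_dY_dP|
\ \ll\ \frac{(N/d)^{3/2-c_0\varepsilon^2}R}{q^{1/2}}q^{o(1)},
\]
which for $d\asymp 1$ is $q^{11/8+1/\nu+O(\varepsilon)}$, exceeding the target $q^{-1}x\approx q^{1+1/\nu+2\varepsilon}$ by a factor $q^{3/8+O(\varepsilon)}$. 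The claimed ``savings of order $q^{-1/2}$'' is genuine but insufficient; one short factor of length $q^{1/4+\varepsilon}$ left alone in a mean-value estimate always costs $q^{3/8}$.

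The missing idea is that the specific structure $r=\ell_1\cdots\ell_{12}s$ with $\ell_i\sim q^{1/8}$, $s\sim q^{1/\nu}$ is not decorative: it must be opened up. The paper splits off eight of the $\ell_i$ and the prime $s$ as separate character sums $V_{1/8}(\chi)^8$ and $V_{1/\nu}(\chi)$, and packages the remaining four $\ell_i$ together with $m,n$ into a single sum $W_d(\chi)$ of length $(N/d)^2 q^{1/2}\ge q$. The pointwise Burgess-type saving from Corollary~\ref{cor:CharSquarfree-eps} is applied to the $m$-sum inside $W_d$, and then H\"older with exponents $2,\ 2\nu,\ 2\nu/(\nu-1)$ (not Cauchy--Schwarz) is used, exploiting $\sum_\chi|V_{1/8}|^{16}\ll q^2$, $\sum_\chi|V_{1/\nu}|^{2\nu}\ll q^2$, and $\sum_\chi|W_d|^2\ll (N/d)^4q^{1+o(1)}$. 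This three-way balancing is exactly what recovers the lost $q^{3/8}$.
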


\begin{proof} 
We start with the observation that  if $b_r \ne 0$ and $m, n \sim N$ 
then due to the choice of our parameters we always have 
$$
mnr  \in [N^2R, \psi^{15}N^2 R] \subseteq [x,2x].
$$
In particular, if $b_r \ne 0$ and $m, n \sim N$  then the condition 
$mnr  \in  \cA_{a,q}(x)$ is equivalent to the 
congruence $mnr  \equiv a \pmod q$ and the condition 
$mnr  \in  \cB$ is merely equivalent to $\gcd(mn,q)=1$. 

Let 
$$
S = \sum_{\substack{mnr  \in \cA_{a,q}(x)\\ m, n \sim N}}
a_n b_r \mu^2(mn).
$$

 Using the orthogonality 
of characters we write 
$$
\rS = \sum_{\substack{mnr  \in \cB\\ m, n \sim N}}
a_n b_r \mu^2(mn)\frac{1}{\varphi(q)} \sum_{\chi\in \cX_q} \chi(mnr a^{-1}).
$$
Using the equation~\eqref{inversion}, we  write
\begin{align*}
\rS = \frac{1}{\varphi(q)} \sum_{\chi\in \cX_q}& \sum_{\substack{r \in \cR \\ m,n \sim N}}  a_n \chi(mn)\mu^2(m)\mu^2(n)\\
& \qquad  \sum_{d \mid \gcd(n,m)} \mu(d)  \ \sum_{r \in \cR} \chi(r) b_r \chi(a^{-1})\,. 
\end{align*}
Thus, rearranging the summation, we obtain
\begin{equation}
\label{eq:SSd}
\rS= \sum_{d \leq \psi N} \mu(d)\rS_d, 
\end{equation}
where
$$
\rS_d = \frac{1}{\varphi(q)} \sum_{\chi\in \cX_q}   \sum_{\substack{m\sim N/d,\, \gcd(m,d)=1 \\ 
n\sim N/d, \, \gcd(n,d)=1}}  \chi(mn)\mu^2(m)\mu^2(n)a_{nd}  \sum_{r \in \cR} \chi(r)\chi(d^2a^{-1}). 
$$
In particular $S_d = 0$ unless $\gcd(d,q)=1$, in which case, 
using the orthogonality of characters again, we see that 
$$
\rS_d =   \sum_{\substack{mnr  \in \cA_{ad^{-2},q}(x)\\ m,n\sim N/d, \, \gcd(mn,d)=1 }}
 a_{nd}\mu^2(m)\mu^2(n)  b_r.
$$
Since when $m$ and $n$ are fixed, the value of  $r$ is uniquely defined modulo $q$ and thus can take $O(R/q)$ possible values (recall that $R\ge q$), we have 
the trivial  estimate
\begin{equation}
\label{eq:Triv Sd}
\rS_d \le %% N^{2+o(1)}(R/q+1) d^{-2} =  
N^{2+o(1)}R d^{-2} q^{-1}= x^{1+o(1)} d^{-2} q^{-1}.
\end{equation}

On the other hand, separating the contribution from the principal character in~\eqref{eq:SSd}, we also write 
\begin{equation}
\label{eq:SdMdEd}
\rS_d = \MT_d + O\(\ET_d\), 
\end{equation}
with the main term
$$
 \MT_d = \frac{1}{\varphi(q)} \sum_{\substack{mnr  \in \cB\\ m, n \sim N/d,\, \gcd(mn,d)=1 }}
a_{nd} b_r \mu^2(m)\mu^2(n) 
$$
and the error term
$$
 \ET_d = \frac{1}{\varphi(q)} \sum_{\chi\in \cX_q^*}  \left|\ET_d(\chi) \right|,
 $$
where 
$$
\ET_d(\chi) =  \sum_{\substack{m\sim N/d,\, \gcd(m,d)=1 \\ 
n\sim N/d, \, \gcd(n,d)=1}}  \chi(mn)\mu^2(m)\mu^2(n)a_{nd}  \sum_{r \in \cR} \chi(r)\chi(d^2a^{-1}) .
$$
We now use some parameter $D$ and use~\eqref{eq:Triv Sd} for $d > D$ and use~\eqref{eq:SdMdEd}
otherwise. Together with~\eqref{eq:SSd}, this leads to the asymptotic formula 
\begin{equation}
\label{eq:SME-1}
\rS=    \sum_{d \leq \psi N} \mu(d)  \MT_d + O\(\ET +  x^{1+o(1)} D^{-1} q^{-1}\),
\end{equation}
where 
$$
\ET =  \sum_{d \leq D}  \left|\ET_d(\chi) \right|.
$$

Using the trivial upper bound $\MT_d  \le x^{1+o(1)} d^{-2} q^{-1}$, which is similar to~\eqref{eq:Triv Sd}, 
we obtain
\begin{align*}   \sum_{d \le D } \mu(d)  \MT_d & =     \sum_{d \leq \psi N}  \mu(d) \MT_d + O\( x^{1+o(1)} D^{-1} q^{-1}\)\\
& = \frac{1}{\varphi(q)}\sum_{\substack{mnr  \in \cB\\ m, n \sim N}}
a_n b_r \mu^2(mn)
+ O\(  x^{1+o(1)} D^{-1} q^{-1}\), 
\end{align*}
which together with~\eqref{eq:SME-1} implies
\begin{equation}
\label{eq:SME-2}
\rS=   \frac{1}{\varphi(q)}\sum_{\substack{mnr  \in \cB\\ m, n \sim N}}
a_n b_r \mu^2(mn)
 + O\(\ET +  x^{1+o(1)} D^{-1} q^{-1}\). 
\end{equation}
Hence it remains to estimate $\ET$ which we do by estimating individually $\ET_d$ for $d\le D$. 
From now on, we fix 
$$
D=x^{\varepsilon^2}.
$$

For a real $\omega> 0$ we consider the character sums over 
primes
$$
V_\omega(\chi)  = \sum_{\ell\sim q^{\omega}} \chi(\ell), 
$$
which we use with $\omega =1/8$ and $\omega=1/\nu$. 
We also consider 
the weighted sums
$$
W_d(\chi) = 
\sum_{\substack{m \sim N/d \\ \gcd(m,d)=1}} \sum_{\substack{n \sim N/d \\ \gcd(n,d)=1}}  \sum_{v \in \cV} a_{nd}  \chi(mnv)\mu^2(m)\mu^2(n), 
$$
where $v$ runs through  the set $\cV$ of $q^{1/2 + o(1)}$ products 
$v= \ell_1 \ell_2 \ell_3 \ell_4$ as in~\eqref{eq:set r}. Thus, we can write
\begin{equation}
\label{eq:fE2}
\vert \ET_d \vert = \frac{1}{\varphi(q)} \sum_{\chi\in \cX_q^*}\left|V_{1/8}(\chi)\right|^8  \left|V_{1/\nu}(\chi)\right|\left|W_d(\chi) \right|. 
\end{equation}

We now collect the 
currently available information 
about the sums $V_{1/8}(\chi)$, $V_{1/\nu}(\chi)$ and $W_d(\chi)$.

First, since $N=  q^{1/4 + \varepsilon}$, for a sufficiently  small $\varepsilon>0$ and the above choice of $D$ we 
have $N/d > (dq)^{1/4 +\varepsilon/2}$. Hence, by Corollary~\ref{cor:CharSquarfree-eps}  applied to the character 
$\chi \chi_{0}^{(d)}$ where $\chi \in  \cX_q^*$ and $\chi_{0}^{(d)}$ is the trivial character modulo $d$, 
we have
\begin{equation}
\begin{split}
\label{eq:indiv}
\max_{\chi \in \cX_q^*} \left|W_d(\chi)\right|  & 
\le \frac{N^{1+o(1)}}{d} q^{1/2} \max_{\chi \in \cX_q^*} 
\left|\sum_{\substack{m \sim N/d \\ \gcd(m,d)=1}}   \chi(m)\mu^2(m)\right| \\
& \ll  \left(\frac{N}{d}\right)^{2-c_0\varepsilon^2}q^{1/2}
\end{split}
\end{equation}
with some absolute constant $c_0> 0.$ 

We  also have the inequalities
\begin{equation}
\label{eq:aver1}
 \sum_{\chi \in \cX_q} \left|V_{1/8}(\chi) \right|^{16}  \ll q^{2}, 
\qquad \sum_{\chi \in \cX_q} \left|V_{1/\nu}(\chi) \right|^{2\nu} \ll q^{2},
\end{equation}
and 
\begin{equation}
\label{eq:aver2}
 \sum_{\chi\in \cX_q}   \left|W_d(\chi)\right| ^2
\le  (N/d)^2 q^{3/2+o(1)}\(1 + (N/d)^2q^{-1/2}\) , 
\end{equation}
implied by  Lemma~\ref{lem:Aver}.  Since for the above choice of parameters we have $(N/d)^2 \ge q^{1/2}$
(provided $\varepsilon$ is small enough) 
the inequality~\eqref{eq:aver2} simplifies as
\begin{equation}
\label{eq:aver3}
 \sum_{\chi\in \cX_q}   \left|W_d(\chi)\right| ^2
  \le (N/d)^4 q^{1+o(1)}. 
\end{equation}
We now write $ \left|W_d(\chi) \right| = \left|W_d(\chi) \right|^{1/\nu}  \left|W_d(\chi) \right|^{1-1/\nu}
$ and apply~\eqref{eq:indiv}, deriving from~\eqref{eq:fE2}
\begin{equation}
\label{eq:fE3}
\begin{split}
\vert \ET_d \vert 
\le \frac{1}{\varphi(q)} \(\left(\frac{N}{d}\right)^{2-c_0\varepsilon^2}q^{1/2}\)^{1/\nu} \sum_{\chi\in \cX_q^*}&\left|V_{1/8}(\chi)\right|^8  \left|V_{1/\nu}(\chi)\right| \left|W_d(\chi) \right|^{1-1/\nu}. 
\end{split}
\end{equation}
Finally, since 
$$
\frac{1}{2} + \frac{1}{2\nu} + \frac{\nu-1}{2\nu}  = 1
$$
 by the H{\"o}lder inequality, applied to the sum in~\eqref{eq:fE3}, and extending the summation to 
 all $\chi \in \cX_q$, we obtain 
\begin{align*}
\vert \ET_d \vert \le \frac{1}{\varphi(q)} & \(\left(\frac{N}{d}\right)^{2-c_0\varepsilon^2}q^{1/2}\)^{1/\nu} 
\( \sum_{\chi \in \cX_q} \left|V_{1/8}(\chi) \right|^{16}\right)^{1/2} \\
& \qquad \(\sum_{\chi\in \cX_q} \left|V_{1/\nu}(\chi)\right|^{2\nu}\)^{1/(2\nu)}
\(\sum_{\chi\in \cX_q} \left|W_d(\chi) \right|^2\)^{(\nu-1)/(2\nu)}. 
\end{align*}  
Recalling~\eqref{eq:aver1} and~\eqref{eq:aver3},  we derive
\begin{align*}
\vert \ET_d \vert & \le \frac{1}{\varphi(q)}   \(\left(\frac{N}{d}\right)^{2-c_0\varepsilon^2}q^{1/2}\)^{1/\nu}  q^{1+1/\nu} \((N/d)^4 q^{1+o(1)}\)^{\frac{\nu-1}{2\nu}}\\
&= \frac{(N/d)^2}{\varphi(q)} q^{3/2+1/\nu + o(1)} (N/d)^{-c_0\varepsilon^2/\nu} =  \frac{x}{d^2}\varphi(q)^{-1}(N/d)^{-c_0\varepsilon^2/\nu+o(1)}  .
\end{align*}  
Noticing that $\varepsilon \geq 1/\nu$ and $d\leq x^{\varepsilon^2}$, we derive  
$$
\ET =  \sum_{d \leq D}  |\ET_d| \ll \frac{x^{1-\eta}}{\varphi(q)}
$$ 
for some $\eta= \varepsilon^4$ and a sufficiently small $\varepsilon$,  which together with~\eqref{eq:SME-2} 
concludes the proof. 
\end{proof}

\section{Proofs of Main Results}
 
\subsection{Proof of Theorem~\ref{thm:Malpha}}

\subsubsection{Cube-free moduli}

Here we always assume that $q$ is cube-free.

We fix some sufficiently small $\varepsilon > 0$.

Let $\rho$, $\zeta$ and  $\nu$ be as in~\eqref{eq: rho}, \eqref{eq: zeta} and~\eqref{eq: nu}, respectively,  and let 
$$
%\begin{equation}
%\label{eq: alpha beta}
  \beta = \rho/4 =  \frac{1}{4e^{1/2}}.
%\end{equation}
$$
We also choose $N$, $R$ and $x$ as in  Lemma~\ref{lem:Cong} and  remark that    
$$
N^{\zeta} = q^{(1/4 + \varepsilon)\rho(1+\varepsilon)}  = q^{\beta + 5\beta \varepsilon + \varepsilon^2}
\le  q^{\beta +  \varepsilon}
$$
provided that $\varepsilon$ is small enough.

Finally,  we define $\cK$ as the following {\it multiset\/}  
\begin{equation}\label{defk}
\cK = \{k=mn~:~ m, n \sim N, \ \mu^2(mn)=1, \ p \mid mn \Rightarrow p < N^{\zeta}\}, 
\end{equation}
where the integers $k$ are counted with multiplicity in $\cK$.

For integers $a$ and $q$ with $\gcd(a,q)=1$ and such that $q$ is 
cube-free we consider 
the number $T$ of solutions to the congruence 
\begin{equation}
\label{eq:rk cong}
kr\equiv a \pmod q
\end{equation}
where 
$k\in \cK$, where the multiset $\cK$ is defined by~\eqref{defk} and 
$r$ is  defined by~\eqref{eq:set r} and~\eqref{eq:primes}.

By Lemma~\ref{lem:Cong} and then by Corollary~\ref{cor:AP}  we see 
that 
\begin{equation}
\begin{split}
\label{eq:T asymp}
T & =     \sum_{\substack{kr \in \cA_{a,q}(x)\\ k\in \cK}} b_r   
=   \frac{1}{\varphi(q)}\sum_{\substack{kr  \in \cB\\  k \in \cK}}
 b_r 
 + O\(q^{-1} x^{1-\eta}\)
\\
& \ge \varepsilon C\frac{1}{\varphi(q)} \(\frac{\xi N}{\zeta(2)}\)^2\prod_{\ell \mid q}\(1 +\frac{2}{\ell} \)^{-1} \sum_{r \in \cB} b_r
+ O(x^{1 - \eta} q^{-1})\,. 
\end{split}
\end{equation}

By the prime number theorem there are $q^{3/2+ 1/\nu+o(1)}$ values of $r$ given 
by~\eqref{eq:set r} and~\eqref{eq:primes} and for each of 
them $q^{3/2 + 1/\nu} \ll r \ll q^{3/2 + 1/\nu}$.
Hence, for a sufficiently small $\varepsilon > 0$, after simple calculations,
using~\eqref{eq:phi} and also~\eqref{eq: prod 2l}, 
we obtain from~\eqref{eq:T asymp} that
\begin{equation}
\label{eq:T large}
T \ge N^2R q^{-1+o(1)} =  xq^{-1+o(1)}.
\end{equation}

We see from the definition of the sets of $r$   and $k$ that if $kr $
is not square-free then it is divisible by a square of a prime $\ell \ge q^\kappa$ where $\kappa = \min\{1/8, 1/\nu\}$. 
Together with $kr \in  \cA_{a,q}(x)$ this puts the product $kr\le x$ in a prescribed  arithmetic progression modulo $q\ell^2$.  Thus there are at most $x/q\ell^2$ positive integers $t$ in any such progression. 
Summing over all $\ell \ge q^\kappa$ (and ignoring the primality constraint) we obtain at most 
$$
\sum_{\ell \ge q^\kappa}\frac{x}{q\ell^2} \ll xq^{-1-\kappa}
$$
such values of $t$. From the classical bound on the divisor function, see~\cite[Equation~(1.81)]{IwKow}, 
we infer that  each of  such $t$ leads to at most $t^{o(1)} = q^{o(1)}$ possible triples $(m,n,r)$ with 
$mn =k \in \cK$. Comparing this with (\ref{eq:T large}), out of $T$ solutions to~\eqref{eq:rk cong}  at most $Tq^{-\kappa+o(1)}$ are 
not square-free.
Since $mnr \le x = q^{2+2 \varepsilon +  \frac{1}{\nu}}$  we have 
\begin{equation}
\label{eq: prelim}
M_{1/(4 e^{1/2})+\varepsilon}^*(q) \ll   q^{2 + 3 \varepsilon}, 
\end{equation}
and  the result follows. 
Indeed, assuming that it fails, we see that there is $\varepsilon_0$ and $\delta_0$ such that 
$$
M_{1/(4 e^{1/2})+\varepsilon_0}^*(q) \gg   q^{2 + \delta_0}.
$$
Then taking $\varepsilon = \min\{\varepsilon_0,  \delta_0/4\}$ we obtain a contradiction
with~\eqref{eq: prelim}.

\subsubsection{Arbitrari moduli}

We can derive a result for $q$ non cube-free following the same lines.

 We define the parameters $N=q^{1/3+\varepsilon}$ and $\nu$ as in~\eqref{eq: nu}. We consider the set of integers $r$ that are 
products of $9$ distinct primes of the form 
\begin{equation}
\label{eq:set rbis}
r = \ell_1\ldots \ell_{8} s\mand
\gcd(r,q)=1, 
\end{equation}
where 
\begin{equation}
\label{eq:primesbis}
\ell_1, \ldots, \ell_{8} \sim q^{1/6}, \quad 
 s \sim q^{1/\nu}.
\end{equation}
As before,  $\cK$ we have  
\begin{equation}\label{defkbis}
\cK = \{k=mn~:~ m, n \sim N, \ \mu^2(mn)=1, \ p \mid mn \Rightarrow p < N^{\zeta}\}, 
\end{equation}
where the integers $k$ are counted with multiplicity in $\cK$.

For integers $a$ and $q$ with $\gcd(a,q)=1$, we similarly define
the number $T'$ of solutions to the congruence 
%\begin{equation}
%\label{E}
$$
kr\equiv a \pmod q
$$
%\end{equation}
where 
$k\in \cK$ with  the multiset $\cK$ is defined by~\eqref{defkbis} and 
$r$ is  defined by~\eqref{eq:set rbis} and~\eqref{eq:primesbis}. For this new set of parameters, 
applying Corollary~\ref{cor:CharSquarfree-eps} in the   case of arbitrary $q$ and following exactly the same lines, we can show that the conditions of Lemma \ref{lem:sieve} are fullfilled and prove an exact analogue of Lemma~\ref{lem:Cong}. The proof goes then exactly as in the case of cube-free moduli.

\subsection{Proof of Theorem~\ref{thm:Malpha-p-AA}}

We fix some integer $n  > 2/\alpha$ and
reals   $1/4 > \varepsilon, \delta > 0$  and  define
$$
\beta = 1-  2\delta/n +\varepsilon  
 \mand k = \rf{\beta/\alpha} 
$$

Clearly it is enough to prove Theorem~\ref{thm:Malpha-p-AA} for all but $Q^{o(1)}$ primes in
dyadic intervals $[Q/2,Q]$.

We   further denote
$$
T =\fl{(Q/2)^{2/n}}  \mand W = \fl{(Q/2)^{\beta}}
$$
 and define the sets
\begin{itemize}
\item $\cS$ as the set of square-free integers $s \le T$; 
\item $\cU$ as the set of  products $u = \ell_1\ldots  \ell_k $ of  $k$ distinct primes $ \ell_1,\ldots,  \ell_k  \in [0.5W^{1/k}, W^{1/k}]$.
\end{itemize}

We note that for the above definition we have
$$
n \le  \left\{\frac{2 \log Q}{\log T}\right\} = n +O(1/\log Q).
$$
Hence we have $\gamma \ll 1/\log Q$ in the conditions of  Lemma~\ref{lem:CharSquarfree-AA} 
and thus, recalling that $\delta<1/4$, we have 
\begin{equation}
\label{eq:bound p}
\max_{\chi\in\cX_p^*}
\left|S^\sharp_\chi(T)\right|\le T^{1-\delta}, 
\end{equation} 
for all but $Q^{4\delta+o(1)}$ primes  $p \in  [Q/2, Q]$.

Hence, 
we fix a prime $p \in  [Q/2, Q]$ for which the bound~\eqref{eq:bound p} holds.

Clearly  products $suv$ with $(s,u,v) \in \cS\times \cU\times \cU$ are $p^{\alpha}$-smooth
(as $k$ is chosen to satisfy $\beta/k < \alpha$), 
but generally speaking may not be  square-free. We now claim that there are many products of these type in every  reduced class modulo $p$. Then we show that at least one of these representatives  is square-free.  

Indeed, let us 
fix some integer $a$ with $\gcd(a,p) = 1$ and let $N$ be the number of solutions to
\begin{equation}
\label{eq:cong}
suv\equiv a \pmod p, \qquad (s,u,v) \in \cS\times \cU\times \cU.
\end{equation}

To show that $N > 0$ and thus prove the claim, for a real $x$, 
  as usual,  we denote by  $\pi(x)$   the number of primes $\ell \le x$.

Certainly the asymptotic formula 
$$
\# \cS \sim \frac{6}{\pi^{2}} T
$$ 
for the cardinality of $\cS$ 
is well known, however it is  quite enough for us to use the trivial bounds 
$$
T \ge \# \cS \ge \pi(T).
$$
We also use that 
$$
\#\cU  =\binom{\pi\(W^{1/k}\)}{k}. 
$$
It now follows from the prime number theorem that 
\begin{equation}
\label{eq:Card}
\# \cS = T^{1+o(1)} \mand  \#\cU = W^{1+o(1)}.
\end{equation} 

Using the orthogonality of characters we express the number of solutions~\eqref{eq:cong} as
$$
N= \sum_{(s,u,v) \in \cS\times \cU\times \cU} \frac{1}{p-1}\sum_{\chi\in \cX_p} \chi(suva^{-1}).
 $$
Changing the order of summation and using the multiplicativity of characters, we now obtain
$$
N =\frac{1}{p-1}\sum_{\chi\in \cX_p} \chi(a^{-1})  \(\sum_{u \in \cU}  \chi(u) \)^2 \sum_{s\in \cS}  \chi(s) .
$$
Now, separating the contribution from the principal character, we derive
\begin{equation}
\label{eq:T and R}
N =\frac{\#\cS\( \#\cU \)^2}{p-1}+ \frac{1}{p-1}R,
\end{equation} 
where 
$$
R = \sum_{\chi\in \cX_p^*} \chi(a^{-1}) \(\sum_{u \in \cU}  \chi(u) \)^2
S^\sharp_\chi(T)
$$
and $S^\sharp_\chi(T)$ is given by~\eqref{eq:sum s-f}. 
Since $p$ is chosen to satisfy the bound~\eqref{eq:bound p}, we have 
\begin{equation}
\label{eq:R}
|R| \le  T^{1-\delta} \sum_{\chi\in \cX_p^*} \left| \sum_{u \in \cU}  \chi(u) \right|^2 .
\end{equation} 
Furthermore, using the  orthogonality property~\eqref{eq:orth}, 
we obtain
$$
\sum_{\chi\in \cX_p^*} \left| \sum_{u \in \cU}  \chi(u) \right|^2
  \le \sum_{\chi\in \cX_p} \left| \sum_{u \in \cU}  \chi(u) \right|^2
 = (p-1) \#\cU.
$$
Recalling (\ref{eq:R}), we obtain
$$
|R|\le   T^{1-\delta}p^{1+o(1)} \#\cU, $$
which after substitution in~\eqref{eq:T and R} and then using~\eqref{eq:Card} gives
\begin{align*}
N & =\frac{\#\cS\( \#\cU \)^2}{p-1}
+ O\(T^{1-\delta} \#\cU\)\\
&= \frac{\#\cS\( \#\cU \)^2}{p-1}\(1+ O\(pT^{-\delta}W^{-1}\)\)
&= \frac{\#\cS\( \#\cU \)^2}{p-1}\(1+ O\(p^{-\varepsilon}\)\).
\end{align*}

It remains to show that out $N$ such products $suv$ satisfying~\eqref{eq:cong} 
we can find at least one square-free.

Similarly to the argument of the proof of Theorem~\ref{thm:Malpha} we note  that if $suv$
is not square-free that it is divisible by a square of a prime $\ell \in  (0.5W^{1/k}, W^{1/k}]$
and thus out of $N$ solutions to~\eqref{eq:cong}  at most $TW^{2-1/k}p^{-1+o(1)}$ are 
not square-free. We now conclude that for a sufficiently large $p$ at least one of the products 
$$
suv \le  TW^2 \le Q^{2+2(1-2\delta)/n  +2\varepsilon}
$$
satisfying~\eqref{eq:cong}, is  square-free. We recall that this holds for all but $Q^{4\delta+o(1)}$ primes  $p \in  [Q/2, Q]$.

Because $n$ can be chosen arbitrary large and  while $\varepsilon$ and $\delta > 0$ can be chosen arbitrary
small, the result now follows.

\subsection{Proof of Theorem~\ref{thm:M1}}

Let us fix some sufficiently small $\varepsilon > 0$ and set 
\begin{equation}
\label{eq:DhL}
D = p^{\varepsilon/4}, \qquad h = p-1, \qquad L = p^{1/4+\varepsilon}.
\end{equation}

Let $N_{a,p}^\sharp(L,h)$ be the number of solutions to the  congruence~\eqref{eq:cong llu} 
with a square-free $u$.  Using the standard inclusion-exclusion principle, we write 
$$
N_{a,p}^\sharp(L,h) = \sum_{d \le h^{1/2}} \mu(d) N_{ad^{-2},p}(L,h/d^2).
$$
We use Lemma~\ref{lem:congr-asymp} to estimate the contribution from $d \le D$ 
as 
\begin{align*} 
\sum_{d \le D} \mu(d) N_{ad^{-2},p}(L,h/d^2) & =
\frac{K^2 h}{p}  \sum_{d \le D} \frac{\mu(d)}{d^2}  +  O\(D L^{3/2} p^{1/8+o(1)}\)\\
& = \frac{K^2 h}{p}  \sum_{d=1}^\infty \frac{\mu(d)}{d^2}  +  O\( \frac{K^2 h}{Dp} + D L^{3/2} p^{1/8+o(1)}\)\\
& =  \frac{K^2 h}{\zeta(2) p}   +  O\( \frac{K^2 h}{Dp} + D L^{3/2} p^{1/8+o(1)}\),
\end{align*}
where, as before $K$, is the cardinality of the set  of   primes $\ell \in [L,2L]$.

Next, we use Lemma~\ref{lem:congr-bound} to estimate the contribution from $d > D$ 
as 
\begin{equation}
\begin{split}
\label{eq:large d}
\sum_{D < d \le h^{1/2}}  N_{ad^{-2},p}(L,h/d^2) & =
\sum_{D < d \le h^{1/2}} \(\frac{L^2 h}{d^2p}  +1\)p^{o(1)} \\
& \le \(\frac{L^2 h}{Dp} + h^{1/2}\)p^{o(1)}.
 \end{split}
\end{equation}
 Therefore,  we see that 
$$
N_{a,p}^\sharp(L,h) =   \frac{ K^2 h}{\zeta(2) p}      +  O\( \(\frac{L^2 h}{Dp} + D L^{3/2} p^{1/8}+h^{1/2}\) p^{o(1)}\). 
$$
 In particular,  recalling the choice of parameters in~\eqref{eq:DhL}, we obtain
\begin{equation}
\label{eq:NLhK4}
N_{a,p}^\sharp(L,h)=   \frac{ K^2 h}{\zeta(2) p}    +  
 O\(p^{1/2+7\varepsilon/4+o(1)}\). 
\end{equation} 
Since $K= L^{1+o(1)} =  p^{1/4 + \varepsilon+o(1)}$, the main term in~\eqref{eq:NLhK4}
is of the form $L^{2+o(1)}hp^{-1}  = p^{1/2+2\varepsilon + o(1)}$  which dominates the 
error term. Hence we can simplify the equation~\eqref{eq:NLhK4} as\begin{equation}
\label{eq:NLh4}
N_{a,p}^\sharp(L,h) =  p^{1/2+2\varepsilon + o(1)}. 
\end{equation} 

 Now a product $\ell_1\ell_2 u$ contributing to  $N_{a,p}^\sharp(L,h)$ is not square-free
 if  
 $$
 \ell_1 = \ell_2\quad \text{or} \quad  \ell_1 \mid u \quad \text{or} \quad  \ell_2 \mid u.
$$

Since  each choice   $\ell_1 = \ell_2$ defines $u$ uniquely, 
we have at most $L$ non square-free solutions of this type.

Solutions with  $\ell_j \mid u$, $j =1, 2$ lead to a congruence of the type~\eqref{eq:cong ll2v}
with $h/L$ instead of $h$. Thus, by  Lemma~\ref{lem:congr-boundsquare} there are $Lp^{o(1)}$ such solutions. 

Since both these quantities are much smaller than $N_{a,p}^\sharp(L,h)$
given by~\eqref{eq:NLh4}, and $\varepsilon$ is arbitrary, the result follows.

\subsection{Proof of Theorem~\ref{thm:M1-AA}}

Let us fix some $\varepsilon > 0$ and instead of~\eqref{eq:DhL} we now set
\begin{equation}
\label{eq:DQL}
%D = Q^{\varepsilon/2}, \qquad E = p^{11/25},\qquad h = Q^{30/31+\varepsilon}, \qquad L = Q^{8/31}. 
D = Q^{\varepsilon/2}, \qquad E = Q^{1/6+\varepsilon},\qquad h = Q, \qquad L = Q^{1/6+\varepsilon}. 
\end{equation}

We follow the same lines as in the proof of Theorem \ref{thm:M1} using Lemma~\ref{lem:congr-asymp-AA} instead of Lemma~\ref{lem:congr-asymp}. To begin, we note that for $L \le p^{1/5}$, the bound of Lemma~\ref{lem:congr-asymp-AA} with $k=5$ 
takes the form
$$
\( L^{14/10} p^{1/10} + L^{19/10}\) p^{o(1)} =  L^{14/10} p^{1/10+o(1)}.
$$

Hence for any prime $p \in [Q,2Q]$ to which this bound applies, 
as in the proof of Theorem~\ref{thm:M1}, we estimate the contribution from $d \le D$ 
as 
$$
\sum_{d \le D}  \mu(d)  N_{ad^{-2},p}(L,h/d^2)  =  \frac{K^2 h}{\zeta(2) p}   +  O\( \frac{K^2 h}{Dp} + D  L^{14/10} p^{1/10 +o(1)} \), 
$$
where, as before $K= L^{1+o(1)}$, is the cardinality of the set  of   primes $\ell \in [L,2L]$. 

Next, using Lemma~\ref{lem:congr-bound} we estimate the contribution from $E \ge d > D$ similarly to~\eqref{eq:large d}
as 
\begin{align*} 
\sum_{D < d \le E}  N_{ad^{-2},p}(L,h/d^2) & =
\sum_{D < d \le E} \(\frac{L^2 h}{d^2p}  +1\)p^{o(1)} \\
& \le \(\frac{L^2 h}{Dp} +  E\)p^{o(1)}.
\end{align*}

Finally for large divisors,  precisely  $E < d \le h^{1/2}$ we cover the range of summation 
over $d$ by $O(\log p)$ dyadic intervals of the form $[F,2F]$ 
where $E \le F \le h^{1/2}$.  
Clearly,  recalling~\eqref{eq:DQL}, we verify that   
$$F \le   h^{1/2} \le  p \mand  L^2h/F^2 \le L^2h/E^2  \leq p.
$$
%(provided that $\varepsilon$ is small enough). 
Hence, we can estimate the contribution from each 
of the intervals  by Lemma~\ref{lem:congr-bound-aver} as
\begin{align*} 
R_{a,p}(F,L,h/F^2) & \le    \max\left\{F(L^2h/F^2)^{1/4}p^{-1/4}, F^{1/2}(L^2h/F^2)^{1/4}\right\}  p^{o(1)}\\
& =       \max\left\{F^{1/2} (L^2h)^{1/4}p^{-1/4}, (L^2h)^{1/4}\right\}  p^{o(1)} \\
&=  (L^2h)^{1/4}p^{o(1)}
\end{align*} 
since $F \le h^{1/2} \le p^{1/2}$.

  Therefore,  we see that 
\begin{equation*} 
N_{a,p}^\sharp(L,h)
 =   \frac{ K^2 h}{\zeta(2) p}      +  O\( \(\frac{L^2 h}{Dp} + D L^{14/10} p^{1/10} +E + \(L^{2}h\)^{1/4} \) p^{o(1)}\). 
\end{equation*} 
 In particular,  recalling the choice of parameters in~\eqref{eq:DQL}, we obtain
 $L^2h = Q^{4/3+2\varepsilon}$ and thus 
 $$
\frac{L^2 h}{Dp} =Q^{1/3+(3/2)\varepsilon} , \quad  D L^{14/10} p^{1/10} \le Q^{1/3+19\varepsilon/10}, 
$$
%provided that $\varepsilon$ is small enough, 
while 
$$
 \(L^{2}h\)^{1/4} =  Q^{1/3+\varepsilon/2}.
 $$
 
 In particular
\begin{equation}
\label{eq:NLhK}
N_{a,p}^\sharp(L,h)=   \frac{ K^2 h}{\zeta(2) p}    +  
 O\(  Q^{1/3+19\varepsilon/10}\). 
\end{equation} 
Since $K= L^{1+o(1)} =  p^{1/6 + \varepsilon+o(1)}$, the main term in~\eqref{eq:NLhK}
is of the form $L^{2+o(1)}hp^{-1}  = Q^{1/3+2\varepsilon + o(1)}$  which dominates the 
error term. Hence, we can simplify the equation~\eqref{eq:NLhK} as
\begin{equation}
\label{eq:NLh}
N_{a,p}^\sharp(L,h) =   Q^{1/3+2\varepsilon + o(1)}. 
\end{equation} 

 Now a product $\ell_1\ell_2 u$ contributing to  $N_{a,p}^\sharp(L,h)$ is not square-free
 if  
 $$
 \ell_1 = \ell_2\quad \text{or} \quad  \ell_1 \mid u \quad \text{or} \quad  \ell_2 \mid u.
$$

Since  each choice   $\ell_1 = \ell_2$ defines $u$ uniquely, 
we have at most $L$ non square-free solutions of this type.

Solutions with  $\ell_j \mid u$, $j =1, 2$ lead to a congruence of the type~\eqref{eq:cong ll2v}
with $h/L$ instead of $h$. Thus, by  Lemma~\ref{lem:congr-boundsquare} there are $Lp^{o(1)}$ such solutions. 

Since both these quantities are much smaller than $N_{a,p}^\sharp(L,h)$
given by~\eqref{eq:NLh}, and $\varepsilon$ is arbitrary, the result follows.

\section{Comments}
\label{sec:comm}

Clearly any improvement of Lemma~\ref{lem:CharSquarfree}
immediately leads to an improvement of  Theorem~\ref{thm:Malpha}. In particular, 
it is mentioned in~\cite{Mun} that under the Generalised Riemann
Hypothesis (GRH) the bound 
$$
\max_{\chi\in\cX_p^*}
\left|S^\sharp_\chi(t)\right|\le t^{1/2}p^{ o(1)}, 
$$
holds for any  integer $t <p$. 
Combined with the  argument used in the proof of Theorem~\ref{thm:Malpha-p-AA}, 
it leads to the bound $M_{\alpha}^*(p) \le p^{2+o(1)}$ for any fixed $\alpha > 0$. 
Thus, Theorem~\ref{thm:Malpha-p-AA} shows unconditionally, that this 
holds for an overwhelming majority of primes $p$.

We recall that nontrivial upper bounds on $T_{a,p}(U,V)$ are also known for all $p$, albeit weaker than that of Lemma~\ref{lem:TIaa}.  For example,  Nunes~\cite[Equation~(3.13)]{Nun} gives the bound
$$
T_{a,p}(U,V) \le \min\left\{U^{2/3} V^{1/4}, U^{1/4} V^{2/3}\right\} p^{o(1)}.
$$
which holds for any integer $a$ with $\gcd(a,p)=1$ and reals $U$ and $V$ 
with  $1\le U,V \le p^{3/4}$. This, however, is not enough to improve 
Theorem~\ref{thm:M1}, where the bottleneck comes from bounds of 
double Kloosterman sums in Lemma~\ref{lem:BilinSums}. On the other hand,
a version of the argument used in the proof of Lemma~\ref{lem:TIaa} has been 
used in~\cite{LSZ2} to improve the result of Nunes~\cite{Nun} on squarefree numbers in   arithmetic progressions modulo $q$ on average over moduli $q$. 

Supported by the results of Theorem~\ref{thm:M1} and~\ref{thm:M1-AA}, we believe in the following

\begin{conj} As $p\to \infty$, we have
$$
M(p) =  p^{1+ o(1)}.
$$
\end{conj}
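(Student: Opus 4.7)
The plan is to seek representations of every reduced residue class $a \bmod p$ by products of $k$ distinct primes $\ell_1 < \ell_2 < \cdots < \ell_k$ with each $\ell_i \in [L, 2L]$, where $L = p^{1/k+\varepsilon}$ for a small $\varepsilon > 0$ and a very slowly growing parameter $k = k(p) \to \infty$. Such products are manifestly $p$-smooth and, being products of distinct primes, automatically square-free; their size is $p^{1 + k\varepsilon + o(1)}$, which tends to $p^{1+o(1)}$ once $\varepsilon \to 0$ and $k \to \infty$ in the right regime. So the combinatorial skeleton is essentially free, and the entire weight of the argument falls on showing that $\ell_1 \cdots \ell_k \bmod p$ equidistributes across $\F_p^*$.

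By orthogonality of additive characters combined with the Erd{\H o}s--Tur{\'a}n inequality, the number of such products in a prescribed residue class is $\binom{\#\cL}{k}/(p-1)$ plus an error controlled by the $k$-linear Kloosterman-type sums
$$
W_p^{(k)}(a;L) = \sum_{\ell_1, \ldots, \ell_k \in \cL} \ep\!\left(a\, \overline{\ell_1}\, \overline{\ell_2}\cdots \overline{\ell_k}\right), \qquad \gcd(a,p)=1,
$$
where $\cL$ is the set of primes in $[L,2L]$. It would suffice to establish a power-saving bound $|W_p^{(k)}(a;L)| \le L^k p^{-\delta_k + o(1)}$ for some $\delta_k > 0$ throughout the range $L \ge p^{1/k+\varepsilon}$, since then combining with Lemmas~\ref{lem:congr-asymp}, \ref{lem:congr-bound} and~\ref{lem:congr-boundsquare} (in their obvious $k$-variable versions) along the lines of the proof of Theorem~\ref{thm:M1} would immediately yield the conjecture.

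The most natural route is iterative. Freezing $\ell_3, \ldots, \ell_k$ reduces the problem to the bilinear sum $W_p(a';L)$ of Lemma~\ref{lem:BilinSums}, but this alone only recovers Theorem~\ref{thm:M1}. A genuinely $k$-linear extension would combine H{\"o}lder's inequality with a multivariate analogue of~\cite[Lemma~2.3]{Gar1}, exploiting the extra variables to drive the exponent below $3/2$ and eventually to $1$. An alternative would invoke the sum-product phenomenon in $\F_p$: iterated multiplicative convolutions of $\cL$ with itself grow rapidly, so one expects near square-root cancellation in $W_p^{(k)}$ once $k$ is large enough compared to $1/\varepsilon$.

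The hard part will be preserving the primality constraint on every variable through the Cauchy--Schwarz and H{\"o}lder steps that underlie both approaches. The standard sum-product machinery operates on generic sets, and after each Cauchy--Schwarz one tends to replace $\cL$ by a product or convolution set whose intersection with the primes is difficult to control without losing most of the saving; bypassing this would likely require new sieve-theoretic ideas of the Bombieri--Friedlander--Iwaniec flavour adapted to the multiplicative setting. A more modest intermediate target, and probably the most realistic next step, would be to upgrade Theorem~\ref{thm:M1-AA} to hold for every prime $p$, namely $M(p) \le p^{4/3+o(1)}$ unconditionally; this is equivalent to producing a pointwise counterpart of Lemma~\ref{lem:BilinSums-AA}, i.e.\ matching the on-average bound for $W_p(a;L)$ for every individual $p$, and would already represent a substantive step towards the conjecture.
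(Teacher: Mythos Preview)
The statement you are attacking is a \emph{conjecture}: the paper does not prove it, and indeed explicitly presents it as open, supported only by Theorems~\ref{thm:M1} and~\ref{thm:M1-AA} together with the lower bound~\eqref{eq:Low bound}. So there is no ``paper's proof'' to compare against, and your proposal is, appropriately, a research outline rather than a proof --- you say as much with phrases like ``it would suffice to establish'' and ``the hard part will be''.

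That said, the outline has a quantitative tension you should make explicit. You want products of $k$ primes from $[L,2L]$ with $L=p^{1/k+\varepsilon}$, so the product size is $p^{1+k\varepsilon+o(1)}$; to reach $p^{1+o(1)}$ you must send $k\varepsilon\to 0$. On the other hand, the only general-purpose multilinear bounds currently available for sums like $W_p^{(k)}(a;L)$ over \emph{arbitrary} sets of size $p^{\delta}$ (the Bourgain--Glibichuk--Konyagin machinery you allude to) require $k\ge k_0(\delta)$ with $k_0(\delta)$ growing much faster than $1/\delta$ --- typically like a large power of $1/\delta$. Since here $\delta\approx 1/k+\varepsilon$, one needs $k\gtrsim k_0(1/k)$, which forces $k\varepsilon$ to be bounded away from zero and pins the exponent strictly above $1$. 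In other words, the sum-product route, as it stands, produces square-free $p$-smooth representatives of size $p^{1+c}$ for some fixed $c>0$ but not $p^{1+o(1)}$; closing that gap is precisely the open problem, and your proposal does not indicate a mechanism to overcome it. Your alternative of iterating Garaev's bilinear bound via H{\"o}lder runs into the same wall: each Cauchy--Schwarz step doubles the number of variables while only halving the required range, so the trade-off stalls at a fixed exponent above $1$. The intermediate target you single out --- a pointwise version of Lemma~\ref{lem:BilinSums-AA} yielding $M(p)\le p^{4/3+o(1)}$ for every $p$ --- is a sensible and well-posed subgoal, but it too is open and your proposal offers no new idea towards it.
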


On the other hand, Andrew Booker has given a construction which shows that there is 
an absolute constant $c$ such that 
\begin{equation}
\label{eq:Low bound}
M(p) \ge c p \frac{\log p}{\log \log p}.
\end{equation} 
Indeed, for an integer  parameter $K$ we choose a prime $p$ such that 
$$
 kp+4  \equiv 0 \pmod {p_{k+1}^2}, \qquad   k=1,...,K,
$$
where $p_k$ denotes the $k$th prime. Clearly,  the smallest positive square-free $s \equiv 4 \pmod p$
satisfies $s > Kp$. By the Linnik theorem~\cite[Theorem~18.7]{IwKow} we can take 
$$
p  = \(\prod_{k=2}^K p_{k+1}^2\)^{O(1)} = \exp \(O(K\log K)\)
$$
which implies~\eqref{eq:Low bound}.

\section*{Acknowledgement}

The authors are grateful to  Andrew  Booker  and Carl Pomerance for  
discussions and encouragement.  In particular, Andrew  Booker provided an argument 
leading to the lower bound~\eqref{eq:Low bound}. 

This work was also partially supported (for M.M.) by the Austrian Science Fund (FWF), START-project Y-901 ``Probabilistic methods in analysis and number theory'' led by Christoph Aistleitner and (for I.S.) by the Australian Research Council Grant~DP170100786.

\end{document}